\documentclass{amsart}
\usepackage[margin=1.32in]{geometry}
\usepackage{mathabx}
\usepackage{amsfonts}
\usepackage{lineno,hyperref}
\usepackage{amsmath}
\usepackage{amssymb}
\usepackage{amsthm}
\usepackage{cancel}
\usepackage{tikz-cd}
\usepackage{arydshln}
\usepackage{graphicx}
\usepackage{cite}

\newcommand{\RR}{\mathbb{R}}
\newcommand{\CC}{\mathbb{C}}
\newcommand{\NN}{\mathbb{N}}

\newcommand{\ZZ}{\mathbb{Z}}
\newcommand{\QQ}{\mathbb{Q}}
\newcommand{\OO}{\mathcal{O}}
\newcommand{\cP}{\mathcal{P}}
\newcommand{\HH}{\mathbb{H}}

\newcommand{\nrm}{\text{nrm}}

\newcommand{\tr}{\text{tr}}
\newcommand{\disc}{\text{disc}}
\newcommand{\discrd}{\text{discrd}}

\newcommand{\Mat}{\text{Mat}}
\newcommand{\Isom}{\text{Isom}}
\newcommand{\Mob}{\text{M\"ob}}
\newcommand{\BMS}{\mathrm{BMS}}
\newcommand{\sk}{\mathrm{sk}}
\newcommand{\PS}{\mathrm{PS}}

\theoremstyle{plain}
\newtheorem{theorem}{Theorem}[section]
\newtheorem{definition}[theorem]{Definition}
\newtheorem{corollary}[theorem]{Corollary}
\newtheorem{conjecture}{Conjecture}[section]
\newtheorem{lemma}[theorem]{Lemma}

\theoremstyle{remark}

\newtheorem{remark}{Remark}[section]

\begin{document}
\title{Asymptotic Density of Apollonian-Type Packings}
\author{Matthew Litman}
\address{Department of Mathematics, UC Davis, One Shields Ave, Davis, CA 95616}
\email{mclitman@ucdavis.edu}

\author{Arseniy (Senia) Sheydvasser}
\address{Department of Mathematics, Technion, Haifa}
\email{sheydvasser@gmail.com}

\date{\today}

\begin{abstract}
We consider two seemingly unconnected problems: first, under what circumstances are arithmetic groups like $SL(2,\OO)$ generated by elementary matrices; second, when do certain classes of circle/sphere packings fill up space? We show that these are in fact deeply related, leading to some new results, new proofs of old results, and interesting conjectures.
\end{abstract}

\maketitle

\section{Introduction:}\label{section: introduction}

There has been a lot of interest in studying ``Apollonian-like" circle and sphere packings in recent years, with many different constructions and definitions\cite{GuettlerMallows2010,Stange2015,KontorovichNakamura2019,kapovich_kontorovich_2021,Sheydvasser2019,Dias_2014,Nakamura_2014}. Although there isn't any unified agreement on what ``Apollonian-like" should mean, at minimum it should be a set of oriented $(n - 2)$-spheres with non-intersecting interiors such that there exists a non-trivial subgroup $G$ of $O^+(n,1)$ acting on this set. For the most part, studying such sets has concentrated on taking machinery from number theory, analysis, and geometry to better understand such constructions; our goal in this paper is in some sense to do the reverse. Specifically, we wish to connect the problem of elementary generation of special linear groups to the problem of studying Apollonian type sphere packings. Let $K$ be an algebraic number field, $\mathfrak{o}_K$ its ring of integers, $SL(n,\mathfrak{o}_K)$ the special linear group on $\mathfrak{o}_K^n$, and $E(n,\mathfrak{o}_K)$ the subgroup generated by upper and lower triangular matrices. Then,
    \begin{itemize}
        \item if $n > 2$, then $SL(n,\mathfrak{o}_K) = E(n,\mathfrak{o}_K)$\cite{Bass_Milnor_Serre_1967},
        \item if $K$ is not an imaginary quadratic field, then $SL(n,\mathfrak{o}_K) = E(n, \mathfrak{o}_K)$\cite{Vaserstein_1972},
        \item if $K$ is an imaginary quadratic field, then $SL(2,\mathfrak{o}_K) = E(2,\mathfrak{o}_K)$ if and only if $\mathfrak{o}_K$ is a Euclidean ring\cite{Cohn1966}. Furthermore, if $\mathfrak{o}_K$ is not Euclidean, then $E(2,\mathfrak{o}_K)$ is an infinite-index, non-normal subgroup\cite{Nica_2011}.
    \end{itemize}
    
\noindent This problem can be related to circle packings as follows. If $K$ is an imaginary quadratic field, define its corresponding Schmidt arrangement $\mathcal{S}_K$ as the orbit of the real line under the action of $SL(2,\mathfrak{o}_K)$ on $\CC \cup \{\infty\}$ via M\"obius transformations. Then $\mathcal{S}_K$ is connected as a set if and only if $SL(2,\mathfrak{o}_K) = E(2,\mathfrak{o}_K)$\cite{Stange2018}. This Schmidt arrangement is not Apollonian-like because each circle in it contains infinitely many other circles in it. However, there exists a naturally-defined Apollonian type circle packing $\mathcal{A}_K$ inside of it, dubbed the $K$-Apollonian packing (containing the real line) by Katherine Stange\cite{Stange2015}; this has many nice properties, including the fact that its topological closure is the limit set of a thin, geometrically finite Kleinian group. We can define the \emph{density} $\delta(\mathcal{A}_K)$ as roughly the proportion of the total available space that it takes up (a more precise definition will be given in Section \ref{section: apollonian type}), and this allows us to state a curious geometric connection.

\begin{theorem}\label{thm: unreasonable slightness for circle packings}
Let $K$ be an imaginary quadratic field and $\mathfrak{o}_K$ its ring of integers. Exactly one of the following happens.
    \begin{enumerate}
        \item $\mathfrak{o}_K$ is norm-Euclidean, $\delta\left(\mathcal{A}_K\right) = 1$, and $SL(2,\mathfrak{o}_K) = E(2,\mathfrak{o}_K)$.
        \item $\mathfrak{o}_K$ is not Euclidean,  $\delta\left(\mathcal{A}_K\right) < 1$, and $E(2,\mathfrak{o}_K)$ is an infinite-index, non-normal subgroup of $SL(2,\mathfrak{o}_K)$.
    \end{enumerate}
    
\noindent Furthermore, if $D$ is the discriminant of $K$, then $\delta(\mathcal{A}_K) \rightarrow 0$ as $D \rightarrow \infty$.
\end{theorem}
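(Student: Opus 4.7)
The algebraic halves of (1) and (2) are already in the literature: Cohn's theorem gives $SL(2,\mathfrak{o}_K) = E(2,\mathfrak{o}_K) \Leftrightarrow \mathfrak{o}_K$ Euclidean, and Nica supplies the infinite-index, non-normal conclusion. For imaginary quadratic fields, Euclidean coincides with norm-Euclidean, holding precisely for $D \in \{-3,-4,-7,-8,-11\}$. Thus only the three density assertions require new work, and my plan is to translate the (norm-)Euclidean algorithm into a covering statement about $\mathcal{A}_K$.

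I would first fix a fundamental domain $F$ for the $\mathfrak{o}_K$-translation action on $\CC$, of Lebesgue area $\sqrt{|D|}/2$, and define $\delta(\mathcal{A}_K)$ as the proportion of $F$ covered by disks of $\mathcal{A}_K$. The packing contains $\RR$ together with all horizontal translates $\RR + \lambda$ for $\lambda \in \mathfrak{o}_K$, which come from the upper-triangular part of $E(2,\mathfrak{o}_K)$; the lower-triangular generator $z \mapsto -1/z$ converts these lines into a distinguished ``base disk'' $B_0$, and further elementary transformations produce the full orbit of $B_0$ inside $\mathcal{A}_K$.

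For (1), suppose $\mathfrak{o}_K$ is norm-Euclidean. Given $z \in F$, the norm-Euclidean property yields $\lambda_1 \in \mathfrak{o}_K$ with $|z - \lambda_1| < 1$; setting $z_1 = -1/(z - \lambda_1)$ and iterating produces sequences $\lambda_n, z_n$ such that $z$ sits inside a nested family of disks in $\mathcal{A}_K$ of shrinking diameter. This places every $z$ in the closure of the covered region, and a Vitali-type covering argument, combined with bounded distortion of the iterated M\"obius maps, should promote this topological density to full Lebesgue measure, giving $\delta = 1$. For (2), when $\mathfrak{o}_K$ fails to be norm-Euclidean there exists $z \in F$ with $|z - \lambda|^2 \geq 1$ for every $\lambda \in \mathfrak{o}_K$; such $z$, together with a small neighborhood, is never reached by the iteration, and I would argue via geometric finiteness of the associated Kleinian group that the residual set of $\mathcal{A}_K$ then has positive Lebesgue measure, so $\delta < 1$.

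For the decay $\delta(\mathcal{A}_K) \to 0$, the denominator is $\sqrt{|D|}/2$. The numerator I would bound uniformly in $D$: the curvatures of circles in $\mathcal{A}_K$ are bounded below by a $D$-independent constant (from the structure of $\mathfrak{o}_K$), disks of curvature in $[c,2c]$ contribute area $O(c^{-2})$ each, and their count in $F$ is controlled by a power of $c$ coming from the Patterson-Sullivan theory of the relevant thin Kleinian group; summing geometrically yields a $D$-independent bound on the total covered area, so dividing by $\sqrt{|D|}$ gives $\delta \to 0$. The main obstacle I anticipate is in step (1): the Euclidean algorithm guarantees only pointwise density of the orbit, and promoting this to a positive-measure covering statement requires a quantitative distortion estimate for the elementary subgroup action, ensuring the pulled-back disks do not become arbitrarily elongated relative to $F$.
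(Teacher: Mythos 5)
Your reduction of the algebraic statements to Cohn and Nica, and the identification of the five norm-Euclidean fields, matches the paper; the density claims are where the proposal breaks down, and the central missing idea is the \emph{forbidden ball} (Stange's ghost circle). For case (2) you argue that a point $z$ with $|z-\lambda|\geq 1$ for all $\lambda\in\mathfrak{o}_K$, ``together with a small neighborhood, is never reached by the iteration.'' But failure of the norm-Euclidean inequality at $z$ only rules out the circles produced at the first step of your iteration (those of denominator one); it does not prevent $z$ from lying in the interior of some other circle $\gamma.(\RR\cup\{\infty\})$ of the packing, let alone show that an entire neighborhood of $z$ is uncovered. The paper instead exhibits an explicit circle $G$ (center $\frac12+\frac{\sqrt{-|D|}}{4}$, radius $\frac{\sqrt{|D|-12}}{4}$ when $4\mid|D|$) and proves via a congruence on inversive coordinates that $|b(\inv(G),\inv(C))|>1$ for \emph{every} circle $C$ in the orbit of the real line, so the entire interior of $G$ is disjoint from the packing; that is what yields $\delta(\mathcal{A}_K)<1$. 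Your case (1) has the gap you yourself flag: topological density of the union of open disks does not give full measure, and the ``Vitali plus bounded distortion'' step is not supplied. The paper closes this differently: if $E(2,\mathfrak{o}_K)$ has finite index it is a lattice, so every open ball meets a sphere of the super-packing and hence the interior of a disk of $\mathcal{A}_K$; the residual set is then contained in the limit set of the geometrically finite, infinite-covolume reflection group $\Gamma_u$, which has Hausdorff dimension strictly less than $2$ and hence Lebesgue measure zero by Patterson--Sullivan.

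The decay $\delta(\mathcal{A}_K)\to 0$ is also not established by your argument. Counting circles of curvature $\asymp c$ ``by a power of $c$ coming from Patterson--Sullivan theory'' introduces a leading constant (a ratio of skinning and Bowen--Margulis--Sullivan measures) and a critical exponent, both of which depend on $K$; the paper devotes Section 6 to exactly this computation and states explicitly that only $0<c(\Gamma_u)<\infty$ is known, so no bound uniform in $D$ follows from this route without further conjectures. The paper's actual proof is elementary and again uses the forbidden ball: it has radius roughly $\sqrt{|D|}/4$ and occupies all but an $O(|D|^{-1/2})$-area sliver of the fundamental parallelogram of area $\sqrt{|D|}/2$, so $\delta(\mathcal{A}_K)=O(|D|^{-1})$ directly.
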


This theorem will be proved in Section \ref{section: forbidden balls}---it is mostly an immediate corollary of previously known results, although we do note that it gives a new, geometric proof of the fact that there are only finitely many imaginary quadratic fields $K$ such that $SL(2,\mathfrak{o}_K)$ is generated by elementary matrices. More interestingly, we conjecture that this connection can be generalized in at least two other settings.

\begin{conjecture}[Unreasonable Slightness]\label{conjecture: unreasonable slightness}
Let $dim = 3$, $4$, or $5$. Let $R(\QQ)$ be either an imaginary quadratic field (if $dim = 3$), a rational, definite quaternion algebra equipped with an orthogonal involution $\ddagger$ (if $dim = 4$), or a rational, definite quaternion algebra (if $dim = 5$). Let $R(\ZZ)$ be a maximal order (or $\ddagger$-order) of $R(\QQ)$. Let $G(\ZZ)$ either be $SL(2,R(\ZZ))$ if $dim = 3,5$ or $SL^\ddagger(2,R(\ZZ))$ if $dim = 4$. Let $E(\ZZ)$ be the subgroup generated by upper and lower triangular matrices. Then there exists a corresponding Apollonian type $(dim - 2)$-sphere packing $\mathcal{A}_{R(\ZZ),u}$ whose closure is the limit set of a discrete, thin, geometrically finite subgroup of the isometry group of hyperbolic $dim$-space $\HH^{dim}$, such that exactly one of the following is true.
    \begin{enumerate}
        \item $R(\ZZ)$ is norm-Euclidean (or norm $\ddagger$-Euclidean, if $dim = 4$), $\delta\left(\mathcal{A}_{R(\ZZ),u}\right) = 1$, and $G(\ZZ) = E(\ZZ)$.
        \item $R(\ZZ)$ is not Euclidean (or $\ddagger$-Euclidean, if $dim = 4)$,  $\delta\left(\mathcal{A}_{R(\ZZ),u}\right) < 1$, and $E(\ZZ)$ is an infinite-index, non-normal subgroup of $G(\ZZ)$.
    \end{enumerate}
\end{conjecture}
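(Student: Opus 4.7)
The plan is to mimic the approach used for the imaginary quadratic case in Theorem \ref{thm: unreasonable slightness for circle packings}, systematically extending each ingredient to the quaternionic setting. I would organize the argument around three linked equivalences: density being $1$, the order being (norm) Euclidean, and $G(\ZZ) = E(\ZZ)$. Since in the imaginary quadratic case the three statements are already coupled through known number-theoretic results, the main work is constructing $\mathcal{A}_{R(\ZZ),u}$ and reproving the geometry in dimensions $4$ and $5$.

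First I would give a uniform definition of the packing. In dimension $3$, Stange's construction takes the orbit of the real line under $SL(2,\mathfrak{o}_K)$ on $\CC \cup \{\infty\} = \partial \HH^3$, then extracts an Apollonian-type sub-packing from this Schmidt arrangement. For dimension $5$ one works with $\HH \cup \{\infty\} = \partial \HH^5$ under the natural M\"obius action of $SL(2,R(\ZZ))$; for dimension $4$ one restricts to a $\ddagger$-fixed real hyperplane in $\HH$ on which $SL^\ddagger(2,R(\ZZ))$ acts. In each case I would mimic Stange by taking the $E(\ZZ)$-orbit of a distinguished hypersphere containing the ``real'' reference flat, and then prove (i) that interiors are pairwise disjoint, (ii) that the closure is invariant under a thin, geometrically finite discrete subgroup of $\Isom(\HH^{dim})$, and (iii) that the generators of this group are essentially the elementary matrices plus finitely many reflections.

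Next I would prove the two implications of the density dichotomy. For ``norm-Euclidean $\Rightarrow \delta = 1$,'' the Euclidean division algorithm translates into a reduction algorithm for cusp points: given any point $p$ in the available region, repeated application of elementary matrices brings $p$ arbitrarily close to a sphere of the packing, so $p$ lies in the closure. For ``non-Euclidean $\Rightarrow \delta < 1$,'' I would try to produce an explicit ``forbidden ball'' of positive radius around each $a/b \in R(\QQ)$ at which the division algorithm fails; the orbit of the packing under $E(\ZZ)$ cannot penetrate such a ball, giving a positive-volume complement. The algebraic halves are then extracted from, and simultaneously used to prove, the geometric halves: elementary generation is equivalent to the Schmidt-type arrangement being connected (generalizing Stange's theorem), which in turn forces the density to be $1$.

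The main obstacles, in increasing order of difficulty, are as follows. The first is bookkeeping: setting up a clean notion of M\"obius action, orientation, and ``interior'' for spheres in $\partial \HH^4$ and $\partial \HH^5$, especially in the $\ddagger$-twisted case where $SL^\ddagger(2,R(\ZZ))$ is defined via a nonstandard involution. The second is the non-Euclidean side: constructing the forbidden ball requires a quantitative lower bound on how badly the quaternionic division algorithm can fail, and the combinatorics of lattices in $\HH$ are substantially more intricate than in $\CC$. The hardest step is the algebra in dimension $4$, where the equivalence of $SL^\ddagger(2,R(\ZZ)) = E(\ZZ)$ with norm $\ddagger$-Euclideanity is not in the literature and would require adapting the Cohn--Nica arguments (stable range, $K_1$-style obstructions) to orthogonal involutions. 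I would attempt this algebraic step first, since without it the conjecture has no chance of being formulated correctly.
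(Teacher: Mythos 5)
The statement you are trying to prove is Conjecture \ref{conjecture: unreasonable slightness}, which the paper explicitly leaves open; there is no proof of it in the paper to compare against. What the paper actually establishes is much weaker: the $dim=3$ case (Theorem \ref{thm: unreasonable slightness for circle packings}, assembled from known results of Cohn, Nica, and Stange), and, for $dim=4,5$, only the partial statement of Theorem \ref{thm: main theorem} --- namely that \emph{if} $R(\ZZ)$ admits a covering vector and the discriminant is large, then $\delta(\mathcal{A}_{R(\ZZ),u})<1$ and $E(\ZZ)$ has infinite index. Your proposal is a research plan rather than a proof, and the steps you defer are exactly the ones that make this a conjecture.

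Concretely, the gaps are these. First, the construction of $\mathcal{A}_{R(\ZZ),u}$: taking the $E(\ZZ)$-orbit of a distinguished hypersphere produces the super-packing $\hat{\mathcal{S}}_{R(\ZZ),u}$, whose spheres nest and intersect; extracting an honest packing from it (via closure under immediate tangency) is only controlled in the paper when $R(\ZZ)$ has a covering vector, and Tables \ref{tab:dim4_covering_vectors} and \ref{tab:dim5_covering_vectors} show only finitely many orders do --- the paper states outright that it does not know the correct definition in general, so even the object in the conjecture is not defined for most orders. Second, your route from ``non-Euclidean'' to ``$\delta<1$'' does not work as described: a point $a/b$ where the division algorithm fails is merely a point at distance $\geq 1$ from the lattice, and this does not yield a ball whose interior misses every sphere of the packing; the paper's forbidden balls come instead from congruence obstructions on normalized inversive coordinates (Theorem \ref{thm: congruence restriction using normalized covering vectors}) combined with an explicit orthogonal sphere, and even then only for large discriminant. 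Third, the converse direction ``$G(\ZZ)=E(\ZZ)\Rightarrow$ Euclidean'' (Cohn's theorem) and the non-normality of $E(\ZZ)$ when it is proper have no known analogue for $\ddagger$-orders or quaternion orders; you correctly flag the first as the hardest step, but flagging it is not supplying it. Until those pieces exist, the argument cannot close.
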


\begin{remark}
The name ``unreasonable slightness" comes from the paper \emph{The Unreasonable Slightness of $E_2$ Over Imaginary Quadratic Rings} by Bogdan Nica\cite{Nica_2011}.
\end{remark}

\begin{figure}
    \centering
    \begin{tabular}{cc}
    \includegraphics[height = 0.2\textheight]{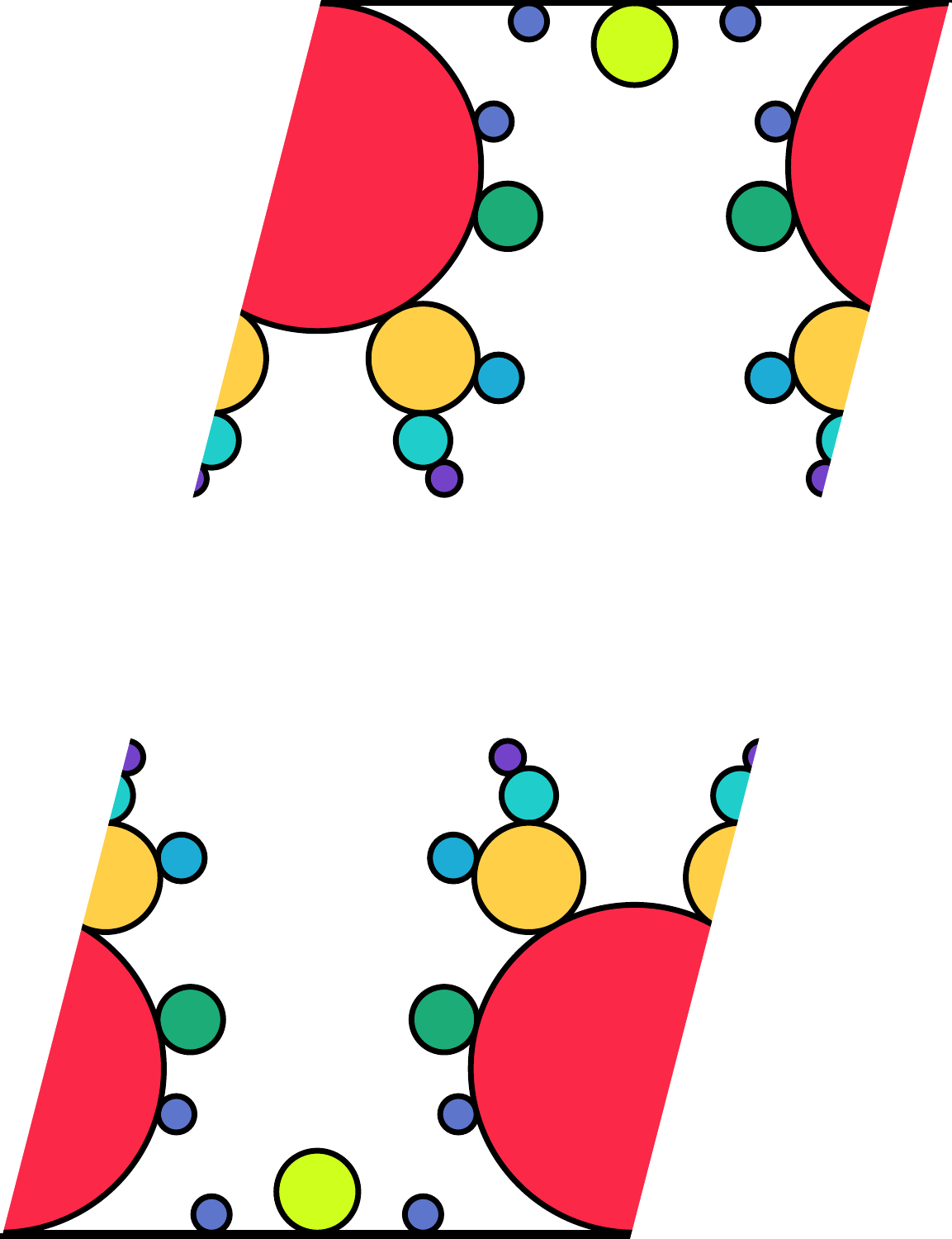} & \includegraphics[height = 0.2\textheight]{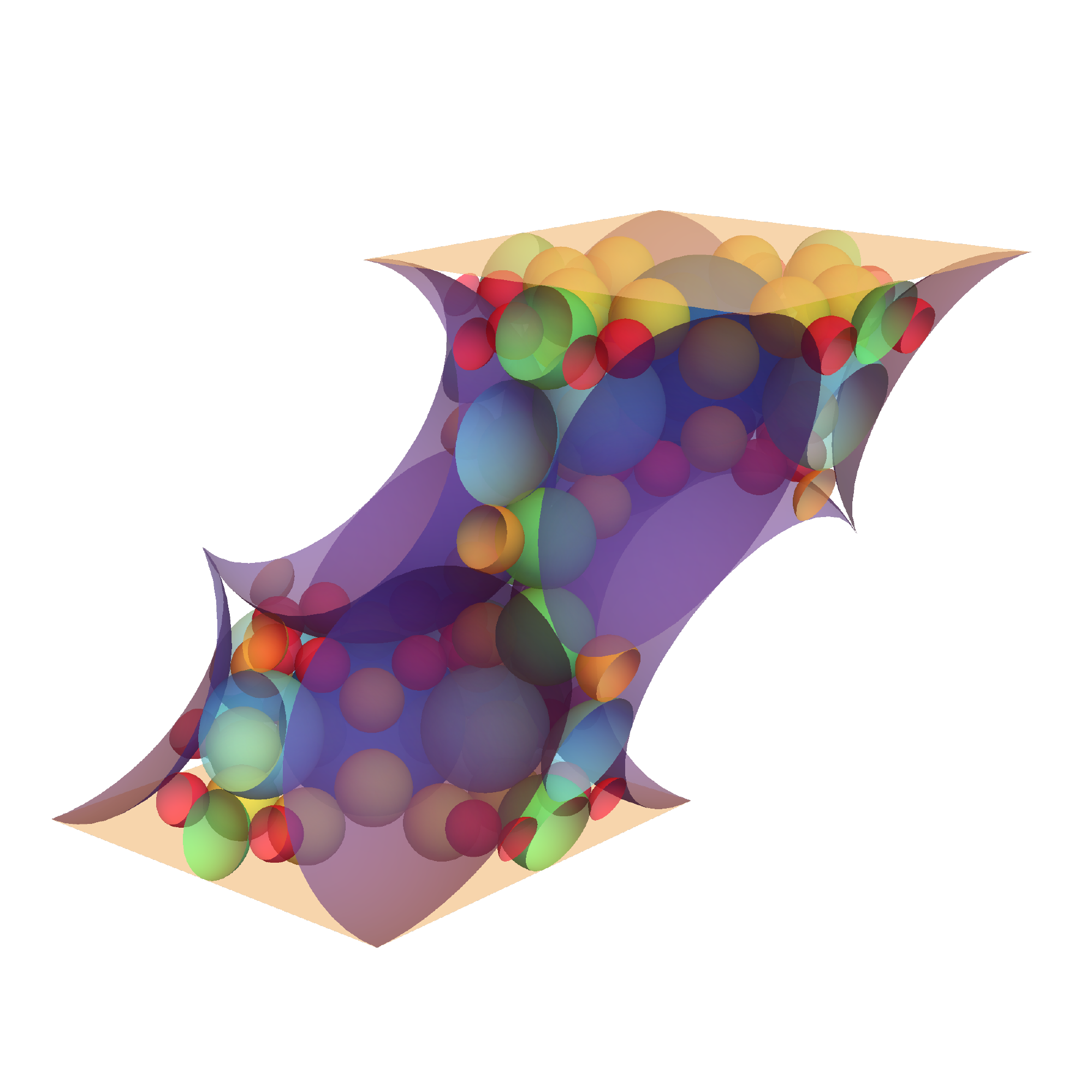}
    \end{tabular}
    \caption{Packings of Apollonian type corresponding to $\ZZ\left[\frac{1 + \sqrt{-5}}{2}\right]$ and $\ZZ \oplus \ZZ i \oplus \ZZ \frac{1 + i + j}{2} \oplus \ZZ \frac{j + ij}{2}$ in $\left(\frac{-1,-6}{\QQ}\right)$. The first one has density less than $1$; the second has density $1$.}
    \label{fig:apollonian_packings}
\end{figure}

\begin{conjecture}\label{conjecture: asymptotic diminishing of density}
With $R(\ZZ)$, $\mathcal{A}_{R(\ZZ),u}$ defined as above, $\delta(\mathcal{A}_{R(\ZZ),u}) \rightarrow 0$ as $D \rightarrow \infty$, where $D$ is the discriminant of $R(\ZZ)$.
\end{conjecture}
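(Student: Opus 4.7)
The plan is to adapt the volume-comparison argument that establishes the analogous statement for imaginary quadratic fields in Theorem \ref{thm: unreasonable slightness for circle packings}. First, normalize so that $\mathcal{A}_{R(\ZZ),u}$ contains a ``universe'' Euclidean hyperplane $u$ and is invariant under translations by a full-rank lattice identified with $R(\ZZ)$, and let $\mathcal{F}$ denote a fundamental domain for these translations. A Minkowski-type computation shows that $\mathrm{vol}(\mathcal{F}) = \mathrm{covol}(R(\ZZ))$ grows like a positive power of $|D|$ as $D \to \infty$; this uses only the fact that the reduced norm form on $R(\ZZ)$ has discriminant $|D|$ and is positive definite, so its successive minima grow with $|D|$.

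The main claim is then that the Euclidean volume of $\mathcal{A}_{R(\ZZ),u} \cap \mathcal{F}$ stays bounded independently of $D$. Granting this,
\[
\delta(\mathcal{A}_{R(\ZZ),u}) \;\leq\; \frac{\mathrm{vol}(\mathcal{A}_{R(\ZZ),u} \cap \mathcal{F})}{\mathrm{vol}(\mathcal{F})} \;\longrightarrow\; 0.
\]
To bound the numerator: the balls of $\mathcal{A}_{R(\ZZ),u}$ tangent to $u$ break into finitely many $R(\ZZ)$-orbits of ``seeds'' whose Euclidean radii are controlled by a $G(\ZZ)$-invariant normalization, so only boundedly many seeds intersect $\mathcal{F}$ and they contribute bounded total volume. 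Balls not tangent to $u$ are, by the recursive fractal descent of the packing, nested inside balls that are, and so contribute a convergent geometric series summing to a bounded constant, uniformly in $D$.

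The main obstacle is the \emph{seed classification}: proving that, in each of the three settings of Conjecture \ref{conjecture: unreasonable slightness}, the balls tangent to $u$ fall into boundedly many $R(\ZZ)$-orbits with Euclidean radii uniformly bounded in $D$. In the imaginary quadratic case this follows from Stange's explicit description of the seeds of $\mathcal{A}_K$; in the quaternionic cases, however, the seeds depend on the (still conjectural) construction of $\mathcal{A}_{R(\ZZ),u}$ itself, so a proof of Conjecture \ref{conjecture: asymptotic diminishing of density} should run parallel to, or follow as a corollary of, the full resolution of Conjecture \ref{conjecture: unreasonable slightness}. A secondary difficulty is ensuring that the geometric series estimate for nested balls is genuinely uniform in $D$, which likely requires a Patterson--Sullivan-type bound on the critical exponent of the associated discrete subgroup that is insensitive to $D$.
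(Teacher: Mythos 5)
Your overall strategy --- show that $\mathrm{vol}(\mathcal{F})$ grows like $\sqrt{|D|}$ while the packing's volume inside $\mathcal{F}$ stays bounded --- is close in spirit to what the paper actually does in the cases it can handle, but two of your key steps do not hold up. First, the nesting claim is false: by the paper's own structure results (Theorem \ref{thm: Apollonian structure} and its corollary), the spheres of $\mathcal{A}_{R(\ZZ),u}$ have pairwise disjoint interiors, so spheres not tangent to the base plane $S_u$ are \emph{not} contained in spheres that are tangent to it; they sit in the interstices between them. Consequently there is no ``convergent geometric series of nested balls'' to sum, and your reduction of the numerator bound to a seed classification collapses. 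In fact, the assertion that $\mathrm{vol}(\mathcal{A}_{R(\ZZ),u}\cap\mathcal{F})$ is bounded uniformly in $D$ is essentially a restatement of the quantitative form of the conjecture itself, not an intermediate step: if the packing had density bounded below, that volume would grow like $\sqrt{|D|}$, so proving the uniform bound \emph{is} the whole problem. Second, as you correctly note, in the quaternionic cases the packing only exists (in this paper) when $R(\ZZ)$ admits a covering vector, so the general statement cannot currently be attacked at all; but note that this also undercuts the claim that your method ``adapts'' --- the statement remains a conjecture in the paper, established only for $dim=3$ and for covering-vector orders (Theorems \ref{thm: unreasonable slightness for circle packings} and \ref{thm: main theorem}).

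Where the paper does prove density decay, the mechanism is different from your sketch and worth internalizing: one exhibits an explicit \emph{forbidden ball}, an oriented sphere orthogonal to the unit spheres centered at lattice points, and uses the congruence $\mathrm{inv}_u(\gamma.S_u)\equiv(0,0,u)\bmod \nrm(u)$ (Theorem \ref{thm: congruence restriction using normalized covering vectors}) to show via the bilinear form $b$ that no sphere of the super-packing meets its interior. Since this ball fills all but a proportion roughly $2/\sqrt{\nrm(u)}$ of the fundamental cell and $\nrm(u)=\Theta(|\discrd(R(\ZZ))|)$, the packing is confined to slabs of bounded volume near $S_u$ and $-S_u+\tau$, giving $\delta(\mathcal{A}_{R(\ZZ),u})\to 0$ unconditionally in those cases (Corollary \ref{cor:n=3forbiddensity} makes this explicit for $dim=3$). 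The alternative analytic route in Section \ref{section: density estimates}, via Oh--Shah counting of bends, is the closest analogue of your ``density formula'' idea, but the paper itself stresses that the constant $c(\Gamma_u)$ is not controlled as $D$ varies, which is exactly the uniformity problem you flag at the end; so that route is conditional, and the arithmetic forbidden-ball construction is what carries the unconditional results.
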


Notice that Theorem \ref{thm: unreasonable slightness for circle packings} can be summarized as just saying that Conjecture \ref{conjecture: unreasonable slightness} and Conjecture \ref{conjecture: asymptotic diminishing of density} are true for $dim = 3$. It might be a little less obvious what the conjecture is saying for the other two cases, for two reasons: first, the reader may be unfamiliar with the notions of $\ddagger$-rings, $\ddagger$-Euclidean rings, and groups $SL^\ddagger(2,R(\ZZ))$. We will give precise definitions in Section \ref{section: basic definitions}, although the reader should view these as analogs of imaginary quadratic rings, Euclidean rings, and the Bianchi groups, respectively. Secondly, we have not given a definition of what the Apollonian type packing $\mathcal{A}_{R(\ZZ),u}$ ought to be for $dim \neq 3$. In fact, for reasons discussed in Section \ref{section: apollonian type}, we do not know what should be the correct definition in general, and we have to make the following restriction.

\begin{definition}
Let $u \in \RR^{dim - 1}$ be a non-zero vector with first component equal to $0$. Let $S_u$ be the oriented hyper-plane in $\RR^{dim - 1}$ passing through the origin, with normal vector $u$. We shall say that $u$ is a \emph{covering vector} if open unit balls centered on points in $S_u \cap R(\ZZ)$ cover $S_u$.
\end{definition}

\begin{remark}
We will eventually be identifying $\RR^{dim - 1}$ with a (subset of a) division algebra---the requirement that the first component be equal to $0$ is equivalent to saying that $1 \in S_u$.
\end{remark}

If $R(\ZZ)$ has a covering vector $u$, we shall show in Section \ref{section: apollonian type} that it is possible to define a corresponding Apollonian type packing $\mathcal{A}_{R(\ZZ),u}$, which we shall show is the limit set of a discrete, thin, geometrically finite group. Our main result can then be phrased as follows.

\begin{theorem}\label{thm: main theorem}
If $R(\ZZ)$ has a covering vector $u$ and its discriminant $D$ is sufficiently large, then $E(\ZZ)$ is an infinite-index subgroup of $G(\ZZ)$ and $\delta(\mathcal{A}_{R(\ZZ),u}) < 1$. Furthermore, if we only index over all such orders $R(\ZZ)$, then $\delta(\mathcal{A}_{R(\ZZ),u}) \rightarrow 0$ as $D \rightarrow \infty$.
\end{theorem}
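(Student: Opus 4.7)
The plan is to combine structural results from earlier sections with a direct volume-counting argument. The strategy rests on three observations: the packing is translation-invariant under a unipotent subgroup of $E(\ZZ)$; the covolume of $R(\ZZ)$ grows as $|D|^{1/2}$ from the discriminant formula for orders; and the total volume the packing occupies in any translation fundamental domain is bounded by a constant depending only on $dim$, independent of $R(\ZZ)$.

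The first move is a reduction. From the generalized Stange-type correspondence developed in Section \ref{section: apollonian type}, the condition $\delta(\mathcal{A}_{R(\ZZ),u}) < 1$ forces $E(\ZZ) \subsetneq G(\ZZ)$, and an analogue of Nica's unreasonable slightness argument upgrades properness to infinite index. Thus the first two conclusions of the theorem follow once the quantitative statement $\delta \to 0$ is in hand.

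To prove $\delta \to 0$, I would exploit the invariance of $\mathcal{A}_{R(\ZZ),u}$ under the unipotent translations $T(\ZZ) = \{\begin{pmatrix} 1 & b \\ 0 & 1 \end{pmatrix} : b \in R(\ZZ)\} \subset E(\ZZ)$, which act on $\RR^{dim-1}$ by translation by $b$. The density equals
\[
\delta(\mathcal{A}_{R(\ZZ),u}) \;=\; \frac{\mathrm{vol}_{dim-1}\bigl(\mathcal{A}_{R(\ZZ),u} \cap \mathcal{F}\bigr)}{\mathrm{vol}_{dim-1}(\mathcal{F})},
\]
for a translation fundamental domain $\mathcal{F}$ of volume $\mathrm{covol}(R(\ZZ)) \asymp |D|^{1/2}$. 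Every non-flat sphere in the packing arises as a M\"obius image $\gamma \cdot S_u$ with $\gamma = \begin{pmatrix} a & b \\ c & d \end{pmatrix} \in E(\ZZ)$ and $c \in R(\ZZ) \setminus \{0\}$, and so has radius $1/|c|^2 \leq 1$. The covering vector hypothesis forces $\mathrm{covol}(R(\ZZ) \cap S_u)$ in $S_u$ to be bounded by a dimensional constant (Minkowski: covering radius at most $1$ in $\RR^{dim-2}$ implies bounded covolume), so the scale-$1$ spheres tangent to $S_u$ at points of $R(\ZZ) \cap S_u$ contribute volume $O(1)$ per fundamental domain. Finer scales, from deeper compositions of M\"obius transformations, contribute a geometrically decreasing sum via a rescaling argument internal to the packing. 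Summing, the total volume in $\mathcal{F}$ is bounded by some $C(dim)$, yielding $\delta \leq C(dim)/|D|^{1/2} \to 0$.

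The main obstacle will be the scale-by-scale bookkeeping in the last step. The naive worry is that as $R(\ZZ)$ becomes sparser the packing might compensate either by spreading out transverse to $S_u$ or by producing unexpectedly large spheres; both possibilities are ruled out --- the first by the covering vector hypothesis, which confines scale-$1$ spheres to a slab of bounded thickness about $S_u$, and the second by the $1/|c|^2$ radius bound. Turning this into a uniform volume estimate, using disjointness of sphere interiors together with the recursive description of $\mathcal{A}_{R(\ZZ),u}$ from Section \ref{section: apollonian type}, is the technical heart of the proof. A secondary concern is the generalization of Nica's argument to the quaternion and $\ddagger$-ring settings; while the ideas should carry over, the precise implication ``properness implies infinite index'' may require independent verification in those settings.
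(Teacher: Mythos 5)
Your reduction of the group-theoretic conclusions to the density statement is workable, but the mechanism is not an ``analogue of Nica's argument'': the paper gets it from the limit-set argument at the end of Section \ref{section: apollonian type} (if $E(\ZZ)$ were finite index it would be a lattice, its limit set would be all of $\RR^{dim-1}$, and Corollary \ref{corollary: containment of superpacking} would then force density $1$). This also spares you the secondary worry you raise, since Nica's proof is not available in the quaternionic settings. The genuine gap is in the density estimate itself. The step ``finer scales contribute a geometrically decreasing sum via a rescaling argument internal to the packing'' is asserted, not proved, and it is exactly where a direct volume summation gets stuck. The sum of $\mathrm{vol}(\mathrm{Int}(S))$ over a fundamental domain converges because the critical exponent of $\Gamma_u$ is strictly below $dim-1$, but to conclude $\delta \to 0$ you need this sum to be $o(\sqrt{|D|})$ \emph{uniformly over the family of orders}, and the leading constant in the Oh--Shah count (the $c(\Gamma_u)$ of Section \ref{section: density estimates}) is precisely what cannot be controlled uniformly --- the paper flags this as the reason its asymptotic formula remains conditional. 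Nothing in your sketch substitutes for that control: successive generations of an Apollonian-type packing need not have geometrically decreasing total volume (for the classical gasket the generations sum to the full area of the enclosing region, i.e.\ local density $1$), so geometric decay is a special arithmetic feature to be extracted, not a consequence of rescaling. A smaller issue: the radius bound ``$1/|c|^2 \le 1$'' is not the right formula --- the bend of $\gamma.S_u$ is $(cu\overline{d}-du\overline{c})/\sqrt{\nrm(u)}$ --- although Theorem \ref{thm: congruence restriction using normalized covering vectors} does yield the much stronger bound that every non-flat sphere has radius at most $1/\sqrt{\nrm(u)}$.

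The paper bypasses all of the multi-scale bookkeeping with one construction your proposal is missing: a \emph{forbidden ball}, an explicit sphere orthogonal to the unit spheres centered at lattice points, detected by a congruence on normalized inversive coordinates, whose interior meets no sphere of $\mathcal{S}_{R(\ZZ),u}$ and whose radius is comparable to $\sqrt{\nrm(u)} \asymp \sqrt{|D|}$ (Theorems \ref{thm:n=3forbiddenball}, \ref{thm:ghost_spheres_old}, \ref{thm: dim_5 case also has forbidden balls}). Its existence gives $E(\ZZ)$ infinite index directly via Theorem \ref{thm: forbidden balls force infinite index}, gives $\delta<1$ immediately, and since it occupies all but roughly a $2/\sqrt{\nrm(u)}$ proportion of the fundamental parallelepiped, it gives $\delta \le 2/\sqrt{\nrm(u)} \to 0$. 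The real work of the theorem lives in the classification of orders admitting covering vectors and the case-by-case congruence computations that produce these balls; any correct proof needs either that construction or the uniform counting estimate you left unproved.
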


\begin{remark}
In fact, we show that $|\disc(R(\ZZ))| > 33$ is sufficient---this can be further improved, although it depends on the dimension $dim$ and the class of $R(\ZZ)$.
\end{remark}

While some special cases of this theorem were proved in a paper of the second author\cite{Sheydvasser2019}, this is a new result---in particular, we prove that $E(\ZZ)$ is an infinite index subgroup for a variety of arithmetic groups $G(\ZZ)$ for which this was not previously known. The key ingredients in the proof are geometric in nature, coming from the tangency structure of $\mathcal{A}_{R(\ZZ),u}$.

We also investigate the problem of studying the asymptotic behaviour of $\delta(\mathcal{A}_{R(\ZZ),u})$ as a function of the discriminant---we believe that this might be of independent interest. In Section \ref{section: density estimates} we provide an asymptotic growth formula for $\delta(\mathcal{A}_{R(\ZZ),u})$ for appropriate choices of $R(\ZZ)$ and $u$. This formula shows that, assuming some natural conjectures on the growth rates of various geometric constants, $\delta(\mathcal{A}_{R(\ZZ),u})$ grows like $\sqrt{|\disc(R(\ZZ))|}^{-1}$, and in conjunction with Theorem \ref{thm: unreasonable slightness for circle packings} tells us that $\mathfrak{o}_K$ is not Euclidean and $E(2,\mathfrak{o}_K)$ does not generate $SL(2,\mathfrak{o}_K)$ as the discriminant get large. The constants present in the asymptotic density formula depend on the spectral theory of the symmetry group of the packing and gaining control on these could be of interest to a computational dynamicist. In Section \ref{section: forbidden balls}, we give a heuristic argument for the behaviour of the density as the discriminant grows by introducing the notion of \textit{forbidden balls}, a generalization of Stange's ghost circles. This approach provides us with an explicit upper bound on the density of the packing for dimension $3$, and the approach can be iterated to obtain finer and finer estimates of the density. For dimensions $4$ and $5$, we provide an exhaustive list of orders in which forbidden balls appear along with the coordinates of forbidden ball for each case.

\section{Basic Definitions:}\label{section: basic definitions}

We start by giving definitions for the rings $R(\ZZ)$ and associated objects. Throughout, we shall use the notation that
    \begin{align*}
        H = \left(\frac{a,b}{\mathbb{F}}\right)
    \end{align*}
    
\noindent denotes the quaternion algebra over a field $\mathbb{F}$ generated by elements $i,j$ with the defining relations $i^2 = a \in \mathbb{F}^\times$, $j^2 = b \in \mathbb{F}^\times$, and $ij = -ji$; we assume throughout that the characteristic of $\mathbb{F}$ is not $2$. Such a quaternion algebra is called rational if $\mathbb{F} = \RR$; it is called definite if additionally $a < 0$ and $b < 0$, or equivalently this algebra can be viewed as a sub-algebra of the classical Hamilton quaternions $H_\RR$. If $R$ is a Dedekind ring with field of fractions $\mathbb{F}$ and $H$ is a finite-dimensional algebra over $\mathbb{F}$, then an ($R-$) \emph{order} $\OO$ of $H$ is an $R$-lattice (that is, a finitely generated sub-module of $H$ such that $\mathbb{F}\OO = H$) which is also a sub-ring of $H$. An order is called \emph{maximal} if it is not contained in any larger order. We will primarily look at the case where $R = \ZZ$ and $H$ is either an imaginary quadratic field (if $dim = 3$) or a rational, definite quaternion algebra (if $dim = 4,5$). In either case, there is a natural conjugation map $\alpha \mapsto \overline{\alpha}$, which we use to define the \emph{trace} $\tr(\alpha) = \alpha + \overline{\alpha}$ and the \emph{norm} $\nrm(\alpha) = \alpha\overline{\alpha}$. If $S$ is a subset of an algebra with such a conjugation map, we shall write $S_0$ to denote the trace zero subset of it.

Given a ring $R$, an \emph{involution} on $R$ is a group homomorphism $\sigma: R \rightarrow R$ such that $\sigma^2 = id$ and $\sigma(xy) = \sigma(y)\sigma(x)$ for all $x,y \in R$. A homomorphism of rings with involutions $\varphi:(R,\sigma) \rightarrow (S,\sigma')$ is a ring homomorphism such that $\varphi \circ \sigma = \sigma' \circ \varphi$. Throughout, we shall write $R^+$ to denote the subset of $R$ consisting of elements $\alpha \in R$ such that $\sigma(\alpha) = \alpha$; $R^-$ will denote the subset of $\alpha \in R$ such that $\sigma(\alpha) = -\alpha$. Quaternion algebras over $\mathbb{F}$ admit exactly two kinds of $\mathbb{F}$-linear involutions: the standard involution
    \begin{align*}
        \overline{x + yi + zj + tij} = x - yi - zj -tij,
    \end{align*}
    
\noindent and the orthogonal involutions which act as $id$ on a subspace of dimension $3$ and as $-id$ on the orthogonal subspace of dimension $1$. Given an orthogonal involution $\ddagger$ on a quaternion algebra $H$, one can always choose a basis $1,i,j,ij$ for $H$ such that one can express this orthogonal involution as
    \begin{align*}
        \left(x + yi + zj + tij\right)^\ddagger = x + yi + zj - tij,
    \end{align*}
    
\noindent which is the convention that we shall take henceforth. A $\ddagger$-\emph{order} of $(H,\ddagger)$ is an order $\OO$ such that $\OO^\ddagger = \OO$---equivalently, this is a sub-ring with involution which is also an order. A $\ddagger$-order is called \emph{maximal} if it is not contained in any larger $\ddagger$-order. Such objects were originally studied by Scharlau in the 1970s\cite{Scharlau1974}, but it was only fairly recently that it was shown that there is an efficient method for determining whether a $\ddagger$-order of quaternion algebra over a global field is maximal, as we shall explain shortly.

There are four notions of discriminant that we shall have to consider. If $R$ is a Dedekind domain with field of fractions $\mathbb{F}$, $H$ is a finite-dimensional $\mathbb{F}$-algebra, and $\OO$ is an order of $H$, then the \emph{discriminant} of $\OO$ is the ideal $\disc(\OO)$ of $R$ generated by
    \begin{align*}
        \det\left(\tr(e_i \overline{e_j})\right)
    \end{align*}
    
\noindent where $e_1, e_2,\ldots$ is an $R$-basis for $\OO$. If $\OO = \mathfrak{o}_K$, the ring of integers of an imaginary quadratic field $K = \QQ(\sqrt{n})$ for some square-free integer $n$, then
    \begin{align*}
        \disc(\mathfrak{o}_K) = \begin{cases} n & \text{if $n = 1 \mod 4$} \\ 4n & \text{otherwise.} \end{cases}
    \end{align*}
    
\noindent If $\OO$ is an order of a quaternion algebra, then $\disc(\OO)$ is always a square ideal. In either case, we will write $\discrd(\OO)$ to mean the \emph{reduced discriminant}, which is the square-free part of the ideal $\disc(\OO)$. Closely related is the notion of the \emph{discriminant} of a quaternion algebra $H$ over $\mathbb{F}$, which is the ideal
    \begin{align*}
        \disc(H) = \prod_{\substack{\mathfrak{p} \subset R \text{ prime ideal} \\ H_\mathfrak{p} \text{ a division algebra}}} \mathfrak{p}.
    \end{align*}
    
\noindent An order $\OO$ of $H$ is maximal if and only if $\discrd(\OO) = \disc(H)$. Finally, the \emph{discriminant} of an orthogonal involution $\ddagger$ on a quaternion algebra $H$ over $\mathbb{F}$ is the set $\disc(\ddagger) = \alpha^2 \left(\mathbb{F}^{\times}\right)^2$, where $\alpha$ is any non-zero element in $H^-$. Unlike, the others, this is not an ideal, but this is easily fixed---we write $\iota(\disc(\ddagger))$ for the $R$-ideal generated by $\disc(\ddagger) \cap R$. With these conventions, we get an easy way to characterize maximal $\ddagger$-orders: if $\mathbb{K}$ is a global or local field (such as an algebraic number field or one of its localizations), $\mathfrak{o}_{\mathbb{K}}$ is its ring of integers, $H$ is a quaternion algebra over $\mathbb{K}$, and $\OO$ is a $\ddagger$-order of $H$, then $\OO$ is maximal if and only if $\discrd(\OO) = \disc(H) \cap \iota(\disc(\ddagger))$\cite{Sheydvasser2017}. If $I$ is a $\ZZ$-ideal, we shall write $|I|$ to denote its smallest non-negative generator.

There are also some associated groups that we need to define. If $(R,\sigma)$ is a ring with involution, then we can extend this to a ring with involution $(\Mat(2,R),\hat{\sigma})$, where
    \begin{align*}
        \hat{\sigma}\left(\begin{pmatrix} a & b \\ c & d \end{pmatrix}\right) = \begin{pmatrix} \sigma(d) & -\sigma(b) \\ -\sigma(c) & \sigma(a) \end{pmatrix}.
    \end{align*}
    
\noindent Then we may define the twisted special linear group
    \begin{align*}
        SL^\sigma(2,R) = \left\{M \in \Mat(2,R)\middle| \hat{\sigma}(M)M = M\hat{\sigma}(M) = 1\right\}.
    \end{align*}
    
\noindent If $R$ is a commutative ring and $\sigma$ is the identity, then $SL^\sigma(2,R) = SL(2,R)$, motivating the name. We shall be primarily interested in the case where $R$ is a subring with involution of a definite, rational quaternion algebra $H$ with orthogonal involution $\sigma = \ddagger$; in this context, these groups were originally introduced in a paper of the second author\cite{Sheydvasser2019}. It is readily checked that $SL^\ddagger(2,H)$ is a subgroup of $SL(2,H)$. As $H$ is a noncommutative division algebra, it might not be obvious what is meant by the special linear group in this case---there are a number of ways to define it, such as by introducing the Dieudonn\'e determinant, but we will use the following simple characterization: $H$ can be embedded as a ring into $\Mat(2,\CC)$, and therefore $\Mat(2,H)$ can be viewed as a subring of $\Mat(4,\CC)$. With this in mind, we can define $SL(2,H) = \Mat(2,H) \cap SL(4,\CC)$.

Finally, we need to define Euclidean and $\ddagger$-Euclidean ring. Recall that a subring $R$ of a division algebra is \emph{Euclidean} if there exists a function $\Phi:R \rightarrow W$ to some well-ordered set $W$ such that for all $a,b \in R$ with $b \neq 0$, there exists $q \in R$ such that $\Phi(a - bq) < \Phi(b)$. In this case, the usual Euclidean algorithm can be applied to $R$; a simple corollary to this is that if $E(2,R)$ is the subgroup of $SL(2,R)$ generated by upper and lower triangular matrices, then $SL(2,R) = E(2,R)$. There is a corresponding notion for rings with involutions as well, originally explored by the second author\cite{Sheydvasser2021}: let $(R,\sigma)$ be a subring with involution of a division algebra; we say that it is $\sigma$-\emph{Euclidean} if there exists a function $\Phi: R\rightarrow W$ to some well-ordered set $W$ such that for all $a,b \in R$ with $b \neq 0$ and $a\sigma(b) \in R^+$, there exists some $q \in R^+$ such that $\Phi(a - bq) < \Phi(b)$. A $\sigma$-Euclidean ring has a corresponding $\sigma$-Euclidean algorithm, which retains many of the nice properties of the usual Euclidean algorithm; it is easily proved, for example, that if $(R,\sigma)$ is $\sigma$-Euclidean, then $SL^\sigma(2,R) = E(2,R)$, where $E(2,R)$ is the subgroup of $SL^\sigma(2,R)$ generated by upper and lower triangular matrices.

With these preliminaries, we can now fix some notation that we shall use throughout the rest of this paper. By $R(\ZZ)$, we shall mean either the ring of integers of an imaginary quadratic field (if $dim = 3$), a maximal $\ddagger$-order of a rational, definite quaternion algebra with orthogonal involution $\ddagger$ (if $dim = 4$), or a maximal order of a rational, definite quaternion algebra (if $dim = 5$). Furthermore, if $S$ is a commutative ring, then we shall write
\begin{align*}
        R(S) &= \begin{cases} R(\ZZ) \otimes_\ZZ S & \text{if $\text{dim} = 3,5$} \\ (R(\ZZ) \otimes_\ZZ S,\ddagger) & \text{if $\text{dim} = 4$} \end{cases} \\
        G(S) &= \begin{cases} SL(2,R(S)) & \text{if $\text{dim} = 3,5$} \\ SL^\ddagger(2,R(S)) & \text{if $\text{dim} = 4$,} \end{cases}
    \end{align*}
    
\noindent and $E(S)$ the subgroup of $G(S)$ generated by upper and lower triangular matrices. By some abuse of notation, we will write $R(S)^+$ to mean either $R(S)^+$ (if $dim = 4$) or just $R(S)$ (if $dim = 3,5$).

There is an interpretation of the above in terms of hyperbolic geometry, as follows. It is well-known that the isometry group of hyperbolic $dim$-space, $\Isom(\HH^{dim})$, is naturally isomorphic to $\Mob(dim - 1)$, the M\"obius group on $\RR^{dim - 1} \cup \{\infty\}$---that is, the group generated by reflections through $(dim - 2)$-spheres. This can be seen by taking the upper-half space model for $\HH^{dim}$ and identifying $\RR^{dim - 1} \cup \{\infty\}$ with the boundary $\partial \HH^{dim}$. Then the transformations on the boundary uniquely lift to isometries of hyperbolic space. If necessary, we can restrict to the orientation-preserving pieces of both groups, yielding the isomorphism $\Isom^0(\HH^{dim}) \cong \Mob^0(dim - 1)$. However, if $dim$ is small, then there are additional accidental isomorphisms. Specifically, if we take $H_\RR$ to be the Hamilton quaternions, then
    \begin{itemize}
        \item $\Mob^0(2) \cong PSL(2,\CC)$ via identifying $\RR^2$ with $\CC$,
        \item $\Mob^0(3) \cong PSL^\ddagger(2,H_\RR)$ via identifying $\RR^3$ with $H_\RR^+$,
        \item $\Mob^0(4) \cong PSL(2,H_\RR)$ via identifying $\RR^4$ with $H_\RR$.
    \end{itemize}
    
\noindent In each case, to define how the right hand group acts on $\RR^n \cup \{\infty\}$, we simply use the action
    \begin{align*}
        \begin{pmatrix} a & b \\ c & d \end{pmatrix}.\rho = (a\rho + b)(c\rho + d)^{-1}.
    \end{align*}
    
\noindent Note that $R(\ZZ)$ is a discrete subring of $\CC$ (if $dim = 3$), or a discrete subring with involution of $H_\RR$ (if $dim = 4$), or a discrete subring of $H_\RR$ (if $dim = 5$), and consequently $G(\ZZ)$ is a discrete subgroup of
    \begin{align*}
        G(\RR) \cong \begin{cases} SL(2,\CC) & \text{if $dim = 3$} \\ SL^\ddagger(2,H_\RR) & \text{if $dim = 4$} \\ SL(2,H_\RR) & \text{if $dim = 5$}, \end{cases}
    \end{align*}
        
\noindent which is to say that $G(\ZZ)/\{\pm 1\}$ is a Kleinian group. However, more than that, it is easy to see that $G(\ZZ)$ is an arithmetic group, with the immediate corollary that it a lattice. In this setting, this means the following: if one constructs a closed, connected fundamental domain $\mathcal{F}$ for the action of $G(\ZZ)/\{\pm 1\}$ on $\HH^{dim}$, then the hyperbolic volume of $\mathcal{F}$ must be finite. Note that since the volume of the fundamental domain of $E(\ZZ)$ must be $[G(\ZZ):E(\ZZ)]$ times the volume of the fundamental domain of $G(\ZZ)$, this means that $E(\ZZ)$ is a finite-index subgroup of $G(\ZZ)$ if and only if it is a lattice.

\section{Orders with Covering Vectors:}\label{section: covering vectors}

We start by classifying all of the orders that have covering vectors and specifying what those vectors are.

\begin{theorem}\label{thm: covering vector classification}
If $dim = 3$, then every ring of integers $R(\ZZ)$ has a covering vector $u$. Specifically, if $R(\QQ) = \QQ(\sqrt{-n})$ for a square-free integer $n > 0$, then $u = \sqrt{-n}$.

If $dim = 4$, then $R(\ZZ)$ has a covering vector $u$ if and only if it is isomorphic (as a ring with involution) to one of the orders enumerated in Table \ref{tab:dim4_covering_vectors}, and one can take $u = j$.

If $dim = 5$, then $R(\ZZ)$ has a covering vector $u$ if and only if it is isomorphic to one of the orders enumerated in Table \ref{tab:dim5_covering_vectors}, and one can take $u = ij/\gcd(i^2,j^2)$.
\end{theorem}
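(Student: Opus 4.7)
The plan is to handle the three dimensions separately, noting that the $dim = 3$ case is immediate from the structure theory of imaginary quadratic rings, while the $dim = 4, 5$ cases require a finite-case classification driven by a discriminant bound.

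First I would dispose of the case $dim = 3$ by direct computation. Here $\RR^{dim - 1} = \RR^2$ is identified with $\CC$, and $u = \sqrt{-n}$ makes $S_u$ the real line. Any ring of integers $\mathfrak{o}_K$ of $K = \QQ(\sqrt{-n})$ satisfies $\mathfrak{o}_K \cap \RR = \ZZ$: this is clear when $\mathfrak{o}_K = \ZZ[\sqrt{-n}]$, and for $n \equiv 3 \pmod 4$ the element $(a + b\frac{1 + \sqrt{-n}}{2})$ is real iff $b = 0$. Since unit open intervals centered on consecutive integers overlap, $\ZZ$ covers $\RR$ by open unit balls, so $\sqrt{-n}$ is a covering vector.

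For $dim = 4, 5$ the plan is in two stages: a \emph{reduction} to finitely many candidates, followed by a \emph{case-by-case verification}. For the reduction, suppose $u$ is a covering vector and set $L = R(\ZZ) \cap S_u$. Because $1 \in S_u$ (and $R(\ZZ)$ is spanned over $\QQ$ by $1$ together with $R(\ZZ)_0$-like trace conditions), $L$ is a full rank-$(dim - 2)$ sublattice of $S_u \cong \RR^{dim - 2}$. Every nonzero $\alpha \in L \subset R(\ZZ)$ has $|\alpha|^2 = \nrm(\alpha) \geq 1$, so the minimum of $L$ is at least $1$. The unit-ball covering condition says the covering radius $\mu(L) \leq 1$. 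In rank $2$ (for $dim=4$) this forces $\det(L) \leq 2/\sqrt{3}$ by the reduction theory of binary lattices; in rank $3$ (for $dim=5$) a standard covering estimate gives a comparable bound on $\det(L)$. Now $\det(L)^2$ divides $|\disc(R(\ZZ))|$ up to a controlled factor coming from the length of the orthogonal direction $u \RR \cap R(\ZZ)$: because $R(\ZZ)$ is a $\ZZ$-order containing $L$ of codimension $1$, this orthogonal contribution is bounded below in terms of $\nrm(u)$ and the index $[R(\ZZ) : L \oplus (u\RR \cap R(\ZZ))]$. Combining these, one extracts an absolute bound $|\disc(R(\ZZ))| \leq C$ for an explicit constant $C$.

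Second, I would enumerate: by the classical tables of rational definite quaternion algebras ordered by discriminant (and, for $dim = 4$, the classification of orthogonal involutions on them via $\iota(\disc(\ddagger))$), there are only finitely many isomorphism classes of (maximal $\ddagger$-) orders with $|\disc(R(\ZZ))| \leq C$. For each such order I would write down an explicit $\ZZ$-basis, compute $R(\ZZ) \cap S_u$ for each candidate $u$ (which for $dim = 4$ is constrained to lie in the $+1$-eigenspace $H_\RR^+ \cap (\RR)^\perp$, and for $dim = 5$ to $H_{\RR, 0}$), and check either that the resulting rank-$(dim-2)$ lattice has covering radius $\leq 1$ (adding the order to the table) or that it does not (excluding it). Finally, for the orders appearing in Tables \ref{tab:dim4_covering_vectors} and \ref{tab:dim5_covering_vectors}, I would verify directly with the claimed $u = j$ or $u = ij/\gcd(i^2, j^2)$ that the slice lattice is a well-known lattice (typically $\ZZ^2$, $A_2$, $\ZZ^3$, or $D_3$ up to scaling) with covering radius $\leq 1$, completing both directions.

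The main obstacle I anticipate is calibrating the discriminant bound $C$ tightly enough that the enumeration in Step 2 remains manageable, while also correctly accounting for the non-orthogonal decomposition of $R(\ZZ)$ relative to $S_u \oplus u\RR$ (the projection of $R(\ZZ)$ onto $S_u$ may strictly contain $L$, so the naive volume bound has to be refined by keeping track of this index). Past that, the verification stage is routine but necessarily case-heavy, and is what forces the theorem to be stated as an explicit enumeration rather than an intrinsic characterization.
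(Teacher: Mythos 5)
There is a genuine gap, and for $dim = 4$ the proposed reduction is not just incomplete but impossible: you claim to extract an absolute bound $|\disc(R(\ZZ))| \leq C$ and then enumerate finitely many orders, but Table \ref{tab:dim4_covering_vectors} contains \emph{infinitely many} isomorphism classes --- for instance $\left(\frac{-1,n}{\QQ}\right)$ for every $n \equiv -1 \bmod 4$, with $|\discrd(R(\ZZ))| = 2|n| \to \infty$. The reason is that the covering condition only sees the rank-$2$ slice $S_u \cap R(\ZZ)^+$; the orthogonal direction is spanned by $j$ with $\nrm(j) = |n|$ completely unconstrained, so no bound on $\disc(R(\ZZ))$ can follow. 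The correct structural observation (which the paper uses, citing the second author's earlier work) is that this slice is itself an \emph{order in an imaginary quadratic field}, and being covered by open unit balls forces it to be one of the five such orders with $i^2 \in \{-1,-2,-3,-7,-11\}$; the classification then proceeds by determining which maximal $\ddagger$-orders contain such a subring, not by bounding the discriminant of $R(\ZZ)$.

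For $dim = 5$ the finiteness conclusion is true, but your route to it does not close. Bounding $\det(L)$ for the rank-$3$ slice $L = S_u \cap R(\ZZ)$, together with $\lambda_1(R(\ZZ)) = 1$, does not bound the covolume of the rank-$4$ lattice $R(\ZZ)$: the fourth successive minimum (equivalently, your ``orthogonal contribution'') is a priori unbounded above, and a lower bound on it is useless for an upper bound on the discriminant. The paper's essential extra ingredient is algebraic, not geometric: if a quaternion order contains three linearly independent elements of norm at most $r$, then a product of two of them is independent of all three and has norm at most $r^2$, whence $\lambda_4(R(\ZZ)) \leq \lambda_3(R(\ZZ))^2$. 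Combined with $\lambda_3(R(\ZZ)) \leq \lambda_3(L) \leq 2\mu(L) \leq 2$ and Minkowski's second theorem, this yields $|\disc(R(\QQ))| \leq 12\pi^2 < 119$, after which the finitely many maximal orders and the finitely many candidate rank-$3$ sublattices with short bases are enumerated by machine. Your case-by-case verification stage is fine in spirit, but without the multiplicative trick the reduction to finitely many cases fails in $dim = 5$, and in $dim = 4$ the whole strategy must be replaced by the slice-is-an-imaginary-quadratic-order argument.
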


\begin{remark}
For small discriminants, there are some accidental isomorphisms between the orders in Table \ref{tab:dim4_covering_vectors}---for example, if we take
    \begin{align*}
        \OO_1 &= \ZZ \oplus \ZZ i \oplus \ZZ j \oplus \ZZ \frac{1 + i + j + ij}{2} \subset \left(\frac{-1,-1}{\QQ}\right) \\
        \OO_2 &= \ZZ \oplus \ZZ i \oplus \ZZ \frac{i + j}{2} \oplus \ZZ \frac{2 + 2j + ij}{4} \subset \left(\frac{-2,-2}{\QQ}\right),
    \end{align*}
    
\noindent then $\OO_1 \cong \OO_2$. However, if the discriminants are sufficiently large, this no longer occurs.
\end{remark}

	\begin{table}
	\begin{align*}
	\begin{array}{l|ll|l}
	R(\QQ) & R(\ZZ) & \text{Conditions} & |\discrd(R(\ZZ))| \\ \hline
	\left(\frac{-1,n}{\QQ}\right) &	\ZZ \oplus \ZZ i \oplus \ZZ j \oplus \frac{1 + i + j + ij}{2} & \text{if } n \equiv -1 \mod 4 & 2|n| \\
			& \ZZ \oplus \ZZ i \oplus \ZZ \frac{1 + i + j}{2} \oplus \ZZ \frac{j + ij}{2} & \text{if } n\equiv 2 \mod 4 & |n| \\
			& \begin{cases} \ZZ \oplus \ZZ i \oplus \ZZ \frac{1 + j}{2} \oplus \ZZ \frac{i + ij}{2} \\ \ZZ \oplus \ZZ i \oplus \ZZ \frac{i + j}{2} \oplus \ZZ \frac{1 + ij}{2} \end{cases} & \text{if } n \equiv 1 \mod 4 & |n|
		\\ \hline	
	\left(\frac{-2,n}{\QQ}\right) & \ZZ \oplus \ZZ i \oplus \ZZ \frac{1 + i + j}{2} \oplus \ZZ \frac{i + ij}{2} & \text{if } n \equiv -1 \mod 4 & 2|n| \\
			& \ZZ \oplus \ZZ i \oplus \ZZ \frac{1 + j}{2} \oplus \ZZ \frac{i + ij}{2} & \text{if } n \equiv 1 \mod 4 & 2|n| \\
			& \ZZ \oplus \ZZ i \oplus \ZZ \frac{i + j}{2} \oplus \ZZ \frac{2 + 2j + ij}{4} & \text{if } n/2 \equiv -1 \mod 8 & |n| \\
			& \ZZ \oplus \ZZ i \oplus \ZZ \frac{i + j}{2} \oplus \ZZ \frac{2 + ij}{4} & \text{if } n/2 \equiv -3 \mod 8 & |n| \\
			& \ZZ \oplus \ZZ i \oplus \ZZ \frac{2 + i + j}{4} \oplus \ZZ \frac{i - j + ij}{4} & \text{if } n/2 \equiv 3 \mod 8 & |n| \\
			& \ZZ \oplus \ZZ i \oplus \ZZ \frac{i + j}{4} \oplus \ZZ \frac{2 + ij}{4} & \text{if } n/2 \equiv 1 \mod 8 & |n|
		\\ \hline
	\left(\frac{-3,n}{\QQ}\right) & \ZZ \oplus \ZZ \frac{1 + i}{2} \oplus \ZZ j \oplus \ZZ \frac{j + ij}{2} & \text{if } 3\nmid n & 3|n| \\
	    & \ZZ \oplus \ZZ \frac{1 + i}{2} \oplus \ZZ j \oplus \ZZ \frac{3j + ij}{6} &\text{if } n/3 = -1 \mod 3 & |n|\\
	    & \ZZ \oplus \ZZ \frac{1 + i}{2} \oplus \ZZ \frac{i + j}{3} \oplus \ZZ \frac{3j + ij}{6} &\text{if } n/3 = 1 \mod 3 & |n|\\\hline
	\left(\frac{-7,n}{\QQ}\right) & \ZZ \oplus \ZZ \frac{1 + i}{2} \oplus \ZZ j \oplus \ZZ \frac{j + ij}{2} & \text{if } 7\nmid n & 7|n| \\
			& \ZZ \oplus \ZZ \frac{1 + i}{2} \oplus \ZZ j \oplus \ZZ \frac{7j + ij}{14} & \text{if } 7|n & |n| \\ \hline
	\left(\frac{-11,n}{\QQ}\right) & \ZZ \oplus \ZZ \frac{1 + i}{2} \oplus \ZZ j \oplus \ZZ \frac{j + ij}{2} & \text{if } 11\nmid n & 11|n| \\
			& \ZZ \oplus \ZZ \frac{1 + i}{2} \oplus \ZZ j \oplus \ZZ \frac{11j + ij}{22} & \hspace{-5pt}\begin{array}{l} \text{if } 11|n \text{ and } \\ \left(\frac{n/11}{11}\right) = -1 \end{array} & |n| \\
			& \ZZ \oplus \ZZ \frac{1 + i}{2} \oplus \ZZ \frac{3i + j}{11} \oplus \ZZ \frac{11j + ij}{22} & \hspace{-5pt}\begin{array}{l} \text{if } 11|n \text{ and } \\ n/11 \equiv -2 \mod 11 \end{array} & |n| \\
			& \ZZ \oplus \ZZ \frac{1 + i}{2} \oplus \ZZ \frac{4i + j}{11} \oplus \ZZ \frac{11j + ij}{22} & \hspace{-5pt}\begin{array}{l} \text{if } 11|n \text{ and } \\ n/11 \equiv 5 \mod 11 \end{array} & |n| \\
			& \ZZ \oplus \ZZ \frac{1 + i}{2} \oplus \ZZ \frac{2i + j}{11} \oplus \ZZ \frac{11j + ij}{22} & \hspace{-5pt}\begin{array}{l} \text{if } 11|n \text{ and } \\ n/11 \equiv 4 \mod 11 \end{array} & |n| \\
			& \ZZ \oplus \ZZ \frac{1 + i}{2} \oplus \ZZ \frac{5i + j}{11} \oplus \ZZ \frac{11j + ij}{22} & \hspace{-5pt}\begin{array}{l} \text{if } 11|n \text{ and } \\ n/11 \equiv 3 \mod 11 \end{array} & |n| \\
			& \ZZ \oplus \ZZ \frac{1 + i}{2} \oplus \ZZ \frac{i + j}{11} \oplus \ZZ \frac{11j + ij}{22} & \hspace{-5pt}\begin{array}{l} \text{if } 11|n \text{ and } \\ n/11 \equiv 1 \mod 11 \end{array} & |n|
	\end{array}
	\end{align*}
	\caption{All isomorphism classes of $R(\ZZ)$ with covering vectors in $dim = 4$.}
	\label{tab:dim4_covering_vectors}
	\end{table}
	
	\begin{table}
	    \begin{align*}
	        \begin{array}{l|l|l|l}
	        R(\QQ) & R(\ZZ) & |\discrd(R(\ZZ))| & \nrm(u) \\ \hline
	        \left(\frac{-1,-1}{\QQ}\right) & \ZZ \oplus \ZZ i \oplus \ZZ j \oplus \ZZ \frac{1 + i + j + ij}{2} & 2 & 1,2,3,6,10 \\
	        \left(\frac{-1,-3}{\QQ}\right) & \ZZ \oplus \ZZ i \oplus \ZZ \frac{i + j}{2} \oplus \ZZ \frac{1 + ij}{2} & 3 & 3,6 \\
	        \left(\frac{-2,-10}{\QQ}\right) & \ZZ \oplus \ZZ i \oplus \ZZ \frac{2 + i + j}{4} \oplus \ZZ \frac{2 + 2i + ij}{4} & 5 & 5,10 \\
	        \left(\frac{-1,-7}{\QQ}\right) & \ZZ \oplus \ZZ i \oplus \ZZ \frac{i + j}{2} \oplus \ZZ \frac{1 + ij}{2} & 7 & 7 \\
	        \left(\frac{-2,-26}{\QQ}\right) & \ZZ \oplus \ZZ i \oplus \ZZ \frac{2 + i + j}{4} \oplus \ZZ \frac{2 + 2i + ij}{4} & 13 & 13
	        \end{array}
	    \end{align*}
	    \caption{All isomorphism classes of $R(\ZZ)$ with covering vectors in $dim = 5$.}
	    \label{tab:dim5_covering_vectors}
	\end{table}
	
\begin{proof}
The $dim = 3$ case is obvious: there is only one choice of $u$ up to scaling, and the subspace orthogonal to it is $\ZZ$, which is of course covered by unit disks. If $dim = 4$, then $R(\ZZ)^+ \cap S_u$ must be an order of an imaginary quadratic field which is covered by open unit balls; this case was already handled in a paper of the second author\cite{Sheydvasser2019}. This gives the relevant entries in Table \ref{tab:dim4_covering_vectors}. Finally, for the $dim = 5$, we make use of successive minima. Recall that given a lattice $\Lambda \subset \RR^n$ (over $\ZZ$), its successive $i$-th successive minimum $\lambda_i(\Lambda)$ is the infimum of all $r > 0$ such that $\Lambda$ contains $i$ linearly independent vectors of length no more than $r$. Define as well the packing radius $\mu(\Lambda)$ as the smallest $r > 0$ such that closed $r$-balls centered at points in $\Lambda$ cover all of $\RR^n$. It is easy to check that $\mu(\Lambda) \geq \lambda_n(\Lambda)/2$. If $u \in R(\ZZ)_0^+$ is non-zero, then $\Lambda = S_u \cap R(\ZZ)$ is a $3$-dimensional lattice contained in the $4$-dimensional lattice $R(\ZZ)$, hence $\lambda_3(\Lambda) \geq \lambda_3(R(\ZZ))$. Therefore, if $u$ is a covering vector, then it must be that $\mu(\Lambda) \leq 1$, ergo $\lambda_3(R(\ZZ)) \leq 2$. We can also get a bound on $\lambda_4(R(\ZZ))$ by observing that if there are $3$ linearly-independent elements in the quaternion algebra $R(\QQ)$ with norm no more than $r$, then one of their products will be linearly-independent to them all, and will have norm no more than $r^2$. Consequently, $\lambda_4(R(\ZZ)) \leq \lambda_3(R(\ZZ))^2$. By Minkowski's second theorem,
    \begin{align*}
        \lambda_1(R(\ZZ))\lambda_2(R(\ZZ))\lambda_3(R(\ZZ))\lambda_4(R(\ZZ))\text{vol}(B_1) \geq \frac{2^4}{4!}\text{vol}\left(\RR^4/R(\ZZ)\right)
    \end{align*}
    
\noindent where $\text{vol}(B_1) = \pi^2/2$ is the volume of the unit ball in $\RR^4$. Noting that $\lambda_1(R(\ZZ)) = 1$, $1 \leq \lambda_2(R(\ZZ)) \leq \lambda_3(R(\ZZ))$, and $\text{vol}\left(\RR^4/R(\ZZ)\right) = |\disc(R(\QQ))|$, we obtain the bound
    \begin{align*}
        |\disc(R(\QQ))| \leq \frac{4!\lambda_3(R(\ZZ))^4(\pi^2/2)}{2^4} \leq 12\pi^2 < 119.
    \end{align*}
    
\noindent A quaternion algebra is uniquely determined by its discriminant, so that means that there are only finitely many possible choices of $R(\QQ)$ such that $R(\ZZ)$ has a covering vector. Moreover, if $R(\QQ)$ is rational, definite then it only has finitely many isomorphism classes of maximal orders, and these can be effectively enumerated---thus, there are only finitely many possible choices of $R(\ZZ)$ such that $R(\ZZ)$ has a covering vector. Finally, any potential covering vector is determined by the sub-lattice $S_u \cap R(\ZZ)$, which must have a basis of elements of norm no more than $4$---there are only finitely many such elements in $R(\ZZ)$, hence only finitely many such sub-lattices. These can be sifted through by hand if necessary, but it is significantly easier to write some code---in Magma, for example---which will enumerate this list and return only the orders which contain a covering vector. This yields the entries in Table \ref{tab:dim5_covering_vectors}, all of which are readily checked to have the claimed covering vectors.
\end{proof}

To what extent are these covering vectors unique? If we simply scale an existing covering vector, then we get another covering vector; this is uninteresting, as at most this changes the orientation of the initial oriented sphere $S_u$, which will just amount to a reflection of $\mathcal{A}_{R(\ZZ),u}$. With this in mind, we make the following definition.

\begin{definition}
Let $\OO,\OO'$ be maximal orders (if $dim = 3,5$) or maximal $\ddagger$-orders (if $dim = 4$). Let $u,u'$ be covering vectors for $\OO,\OO'$ respectively. We say that $(\OO,u)$ is \emph{equivalent} to $(\OO',u')$ if there exists a ring isomorphism $\varphi: \OO \rightarrow \OO'$ and a constant $c \in \RR^\times$ such that $\varphi(u) = c \varphi(u')$.
\end{definition}

\begin{theorem}\label{thm: covering vectors are excellent}
If $R(\ZZ)$ has a covering vector $u$, then $(R(\ZZ),u)$ is equivalent to one of orders enumerated in Theorem \ref{thm: covering vector classification} together with a covering vector $u$ such that $u \in R(\ZZ)_0^+$ has square-free norm, and such that the two-sided ideal $\mathfrak{I} \subset R(\ZZ)$ that it generates has $p|\mathfrak{I} \cap \ZZ| = p\nrm(u) = |\discrd(R(\ZZ))|$ where
    \begin{align*}
        p \in \begin{cases} \{1,2\} & \text{if } dim = 3 \\ \{1,2,3,7,11\} & \text{if } dim = 4 \\ \left\{\frac{1}{2},1,2,\frac{3}{2},3,5\right\} & \text{if } dim = 5. \end{cases}
    \end{align*}
\end{theorem}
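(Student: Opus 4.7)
The strategy is to start from the classification in Theorem \ref{thm: covering vector classification}, which already gives an explicit candidate covering vector $u$ for each equivalence class: $u = \sqrt{-n}$ for $dim = 3$, $u = j$ for $dim = 4$, and $u = ij/\gcd(i^2, j^2)$ for $dim = 5$. The plan is to verify, for each case and possibly after rescaling or choosing among the several tabulated norms in $dim = 5$, that this $u$ satisfies the three remaining conditions: $u \in R(\ZZ)_0^+$, $\nrm(u)$ is squarefree, and $p\,\nrm(u) = |\discrd(R(\ZZ))|$ for some $p$ in the prescribed finite set.

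The first two properties will fall out of the explicit formulas. Both $\sqrt{-n}$ and $ij$ satisfy $\overline{x} = -x$ under the standard conjugation, so they have trace zero, and in $dim = 4$ the chosen orthogonal involution $\ddagger$ fixes $j$ by the normal form adopted in Section \ref{section: basic definitions}. The squarefree-norm condition reduces to reading off $\nrm(u) = n$ or $|n|$ in $dim = 3,4$, and the values $1,3,5,7,13$ listed in Table \ref{tab:dim5_covering_vectors} for $dim = 5$, all of which are squarefree either by hypothesis or by direct inspection. Similarly, the ratio $p = |\discrd(R(\ZZ))|/\nrm(u)$ will be read off row by row from the tables, yielding the allowed values $\{1,2\}$, $\{1,2,3,7,11\}$, and $\{1/2,1,2,3/2,3,5\}$ in the respective dimensions; the fractional values in $dim = 5$ arise because the rescaling $ij \mapsto ij/\gcd(i^2,j^2)$ can absorb a non-ramified integer factor into the denominator of $p$.

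The principal obstacle will be the identity $\mathfrak{I} \cap \ZZ = (\nrm(u))$ for the two-sided ideal $\mathfrak{I} = R(\ZZ)\,u\,R(\ZZ)$. The containment $(\nrm(u)) \subseteq \mathfrak{I} \cap \ZZ$ is immediate, since $\tr(u) = 0$ forces $u^2 = -\nrm(u)$. For the reverse containment, my plan is to argue prime by prime. At any rational prime $p \nmid \nrm(u)$, $u$ is a unit in $R(\ZZ)_p$, so $\mathfrak{I}_p = R(\ZZ)_p$ and contributes nothing. At a prime $p \mid \nrm(u)$, case-by-case inspection of the tables will show $p \mid |\discrd(R(\ZZ))|$: in $dim = 3$ this says $\QQ_p(\sqrt{-n})$ is the ramified quadratic extension of $\QQ_p$, while in $dim = 4,5$ it says $R(\QQ)_p$ is the unique quaternion division algebra over $\QQ_p$. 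In either situation, the unique maximal (or maximal $\ddagger$-) local order has a unique maximal two-sided ideal $\mathfrak{P}_p$ of reduced norm $p$; and since $u$ is trace-zero with $v_p(\nrm(u)) = 1$, it will serve as a local uniformizer, so that $\mathfrak{I}_p = \mathfrak{P}_p$ and $\mathfrak{I}_p \cap \ZZ_p = p\,\ZZ_p$. Multiplying across all primes then yields $\mathfrak{I} \cap \ZZ = (\nrm(u))$. The hard part will be executing this local structure argument uniformly across the different quaternion algebras and involutions; the remainder of the work is bookkeeping across the finite list of orders furnished by Theorem \ref{thm: covering vector classification}.
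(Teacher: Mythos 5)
There are two genuine gaps in your plan. First, you treat Theorem \ref{thm: covering vector classification} as if it already classified covering \emph{vectors} up to equivalence, so that it suffices to check properties of the single exhibited candidate ($j$, resp.\ $ij/\gcd(i^2,j^2)$). But that theorem only classifies the \emph{orders} admitting a covering vector and exhibits one such vector for each; the statement you are proving quantifies over \emph{every} covering vector $u$ of $R(\ZZ)$, and in $dim=4$ the trace-zero $\ddagger$-fixed space is $2$-dimensional (and $3$-dimensional in $dim=5$), so there are many candidate directions. The paper closes this by arguing that $u$ must be rational up to scaling (else $S_u\cap R(\ZZ)^+$ is not a lattice of full rank), that in $dim=4$ the ring $S_u\cap R(\ZZ)^+$ must be a norm-Euclidean imaginary quadratic ring of integers, which pins down $u=\pm j$, and that in $dim=5$ the sublattice enumeration used to prove Theorem \ref{thm: covering vector classification} in fact enumerates \emph{all} covering vectors, whose normalized norms are the several values recorded in Table \ref{tab:dim5_covering_vectors}. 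Your proposal gestures at ``choosing among the several tabulated norms'' but never establishes that this list is exhaustive, which is the actual content of the theorem.

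Second, your local argument for $\mathfrak{I}\cap\ZZ=(\nrm(u))$ rests on the claim that every prime dividing $\nrm(u)$ divides $|\discrd(R(\ZZ))|$ and hence that $R(\QQ)_p$ is a division algebra there. That is false for several entries of the tables: for the Hurwitz order $|\discrd|=2$ while Table \ref{tab:dim5_covering_vectors} records covering vectors of norm $3$, $6$, $10$, and in $dim=4$ the order in $\left(\frac{-1,-5}{\QQ}\right)$ has $u=j$ of norm $5$ while the algebra splits at $5$ (the $5$ in $|\discrd|=10$ comes from $\iota(\disc(\ddagger))$, not from ramification). At such a split prime the local picture is $\Mat(2,\ZZ_p)$ (or a non-maximal $\ZZ_p$-order inside it), and a trace-zero element with $v_p(\nrm(u))=1$ that is not in $pR(\ZZ_p)$ generates the \emph{unit} two-sided ideal locally, not a uniformizing one --- so ``multiplying across all primes'' does not return $(\nrm(u))$. (The paper itself only asserts $|\mathfrak{I}\cap\ZZ|=\nrm(u)$ as an unproved observation, so you are attempting more than it does, but the mechanism you propose does not work at split primes.) Relatedly, your row-by-row bookkeeping $p=|\discrd|/\nrm(u)$ does not reproduce the stated set in $dim=5$ (e.g.\ it gives $2/3$ for the Hurwitz order with $\nrm(u)=3$); the tabulated data are only consistent with the reciprocal normalization, so this step needs to be pinned down rather than ``read off.''
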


\begin{proof}
If $dim = 3$, this is obvious: $R(\RR)_0$ is $1$-dimensional, hence any two elements of it are related by scaling. Otherwise, we note first that $u$ must be in $R(\QQ)$ up to scaling---if not, then $S_u \cap R(\ZZ)^+$ could not be a lattice of the necessary rank. Since scaling doesn't change equivalence, we may freely assume that $u \in R(\ZZ)_0^+$ and there does not exist any integer $k > 1$ such that $u/k \in R(\ZZ)_0^+$. If $dim = 4$, for $u$ to be a covering vector, $S_u \cap R(\ZZ)^+$ must be a Euclidean ring of integers of an imaginary quadratic field---this means that $R(\ZZ)$ must be isomorphic to one of the orders in Table \ref{tab:dim4_covering_vectors} and $u = \pm j$. But $\nrm(j) = n$, and so the relation between $|\discrd(R(\ZZ))|$ and $|\mathfrak{I} \cap \ZZ| = \nrm(j)$ can be read off from Table \ref{tab:dim4_covering_vectors}.

If $dim = 5$, we expand a little on our original proof that if $R(\ZZ)$ has a covering vector, then it must be isomorphic to an order in Table \ref{tab:dim5_covering_vectors}---our original proof used the fact that one can algorithmically enumerate all sub-lattices of rank $3$ with a basis of vectors with norm no more than $4$, and that any potential covering vectors must correspond to such sub-lattices. However, up to scaling, any such sub-lattice determines a unique covering vector. Therefore, when we enumerate these sub-lattices, we actually enumerate all of the covering vectors. There are too many of them to list them all compactly, but the norms of all these covering vectors $u$, normalized so that they are in $R(\ZZ)^+$ but $u/k \notin R(\ZZ)^+$ for any integer $k > 1$, are listed in Table \ref{tab:dim5_covering_vectors}. Together with the observation that $|\mathfrak{I} \cap \ZZ| = \nrm(u)$, this is enough to give us the theorem.
\end{proof}

With this theorem in mind, we make an additional definition.

\begin{definition}
A covering vector $u$ of $R(\ZZ)$ is \emph{normalized} if $u \in R(\ZZ)^+$ and for any integer $k > 1$, $u/k \notin R(\ZZ)$.
\end{definition}

Henceforth, we shall freely assume that $u$ is a normalized vector; as we now know, we do not lose any generality in so doing. Note that if we restrict $u$ to a normalized covering vector, then by Theorem \ref{thm: covering vectors are excellent}, $\nrm(u) = \Theta(|\discrd(R(\ZZ))|)$.

\section{Super-Apollonian Type Packings:}\label{section: super-apollonian}

We begin our construction of the Apollonian-type packings by first considering a ``super-packing"---a collection of oriented spheres that contains the desired sphere packing, but is in some ways easier to define. The term originated with the super Apollonian packing of Graham, Lagarias, Mallows, Wilkes, and Yan\cite{GLMWY2005}. To start, we define a convenient group.

\begin{definition}
For any $\alpha \in R(\ZZ)^+$, define
    \begin{align*}
        W(\alpha) = \begin{pmatrix} \alpha & 1 \\ -1 & 0 \end{pmatrix}.
    \end{align*}
    
\noindent Let $\mathcal{W}$ be the group generated by all elements $W(\alpha)$.
\end{definition}

\begin{remark}
Matrices of this form are sometimes called Cohn matrices\cite{Cohn1966}.
\end{remark}

\noindent Note that
    \begin{align*}
        W(\alpha) = \begin{pmatrix} 1 & -\alpha \\ 0 & 1 \end{pmatrix}\begin{pmatrix} 1 & 1 \\ 0 & 1 \end{pmatrix}\begin{pmatrix} 1 & 0 \\ -1 & 1 \end{pmatrix}\begin{pmatrix} 1 & 1 \\ 0 & 1 \end{pmatrix}
    \end{align*}
    
\noindent so certainly $W(a) \in E(\ZZ)$, hence $\mathcal{W}$ is a subgroup of $E(\ZZ)$. In fact, it is a finite-index subgroup.

\begin{theorem}\label{thm: Cohn decomposition}
For any $\gamma \in E(\ZZ)$, there exist $a_1, a_2, \ldots a_k \in R(\ZZ)^+$ such that $a_i \neq 0,\pm 1$ if $i \neq 1,k$ and
    \begin{align*}
        \gamma = W(a_1)W(a_2)\ldots W(a_k)\begin{pmatrix} \omega & 0 \\ 0 & \varpi \end{pmatrix}.
    \end{align*}
    
\noindent Consequently, $\mathcal{W}$ is a finite-index subgroup of $E(\ZZ)$.
\end{theorem}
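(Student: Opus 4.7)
The plan is to show that every element of $E(\ZZ)$ can be brought into the claimed normal form, from which the finite-index claim follows at once. The argument proceeds by (i) converting an elementary factorization into a word in the $W(\alpha)$'s, (ii) pushing any stray diagonal factor to the right, and (iii) performing local reductions to eliminate interior occurrences of $W(0)$ and $W(\pm 1)$.

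For step (i), I would verify by direct matrix multiplication the identities
\begin{align*}
    \begin{pmatrix} 1 & \alpha \\ 0 & 1 \end{pmatrix} = -W(\alpha) W(0) \qquad \text{and} \qquad \begin{pmatrix} 1 & 0 \\ \alpha & 1 \end{pmatrix} = -W(0) W(\alpha),
\end{align*}
valid for any $\alpha \in R(\ZZ)^+$. These show that every upper- or lower-triangular generator of $E(\ZZ)$ lies in $\mathcal{W} \cdot \{\pm I\}$. In the $SL^\ddagger$ setting, the upper and lower triangular subgroup additionally contains diagonals of the form $\text{diag}(\omega, (\omega^\ddagger)^{-1})$ with $\omega \in R(\ZZ)^\times$; I collect these, together with $\pm I$, into a group $\mathcal{D}$ of diagonal matrices with unit entries. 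Step (ii) is the commutation rule
\begin{align*}
    \text{diag}(\omega, \varpi) \cdot W(\alpha) = W(\omega \alpha \varpi^{-1}) \cdot \text{diag}(\varpi, \omega),
\end{align*}
again by direct computation. Combining (i) and (ii) with the inclusion $\mathcal{W} \subseteq E(\ZZ)$ already noted in the text yields an initial decomposition $\gamma = W(b_1) \cdots W(b_m) \cdot D$ with $b_i \in R(\ZZ)^+$ and $D \in \mathcal{D}$.

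For step (iii), the three reduction identities
\begin{align*}
    W(\alpha) W(0) W(\beta) &= -W(\alpha + \beta), \\
    W(\alpha) W(1) W(\beta) &= W(\alpha - 1) W(\beta - 1), \\
    W(\alpha) W(-1) W(\beta) &= -W(\alpha + 1) W(\beta + 1),
\end{align*}
each verifiable by direct matrix multiplication, allow us to shorten the word by one or two letters whenever an interior $b_i$ lies in $\{0, \pm 1\}$, absorbing any sign into $D$ via the commutation rule. Since the word length is a nonnegative integer, iteration terminates, yielding a normal form in which no interior entry lies in $\{0, \pm 1\}$; the endpoints remain unconstrained because each reduction requires a middle slot with two neighbors. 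Finally, the finite-index claim follows because we have exhibited $E(\ZZ) = \mathcal{W} \cdot \mathcal{D}$, and $\mathcal{D}$ is finite since $R(\ZZ)^\times$ is finite in all cases under consideration.

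The main technical obstacle is verifying that the commutation rule keeps everything inside $G(\ZZ)$ in the noncommutative settings---specifically, that $\omega\alpha\varpi^{-1} \in R(\ZZ)^+$ whenever $\alpha \in R(\ZZ)^+$ and $\text{diag}(\omega,\varpi) \in \mathcal{D}$. For $dim = 4$ this follows from the $SL^\ddagger$ constraint $\omega = (\varpi^\ddagger)^{-1}$ together with the $\ddagger$-invariance of $R^+$; for $dim = 3, 5$ it reduces to closure of $R(\ZZ)$ under multiplication by units.
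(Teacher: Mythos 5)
Your proof is correct and follows essentially the same route as the paper: convert triangular generators into words in $W(\alpha)$ and $W(0)$, push diagonals to the right via the conjugation rule, and bound the index by the (finite) group of unit diagonals. One harmless slip: the first identity should read $\left(\begin{smallmatrix} 1 & \alpha \\ 0 & 1 \end{smallmatrix}\right) = -W(-\alpha)W(0)$ rather than $-W(\alpha)W(0)$, which does not matter since $\alpha$ ranges over all of $R(\ZZ)^+$; in fact your step (iii), eliminating interior entries in $\{0,\pm 1\}$ by explicit length-reducing identities, supplies a detail the paper's own proof leaves implicit.
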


\begin{remark}
Since $\mathcal{W}$ is a finite-index subgroup of $E(\ZZ)$, it follows that it is a lattice if and only if $E(\ZZ)$ is.
\end{remark}

\begin{proof}
Any upper triangular matrix in $G(\ZZ)$ can be decomposed as
    \begin{align*}
        \begin{pmatrix} * & * \\ 0 & * \end{pmatrix} = \begin{pmatrix} 1 & \alpha \\ 0 & 1 \end{pmatrix}\begin{pmatrix} \omega & 0 \\ 0 & \varpi \end{pmatrix}.
    \end{align*}

Similarly, any lower triangular matrix can be written as a product of a lower triangular matrix with $1$s on the diagonal, and a diagonal matrix. For any $\alpha \in R(\ZZ)^+$,
    \begin{align*}
        \begin{pmatrix} 1 & \alpha \\ 0 & 1 \end{pmatrix} &= -W(-\alpha)W(0) \\
        \begin{pmatrix} 1 & 0 \\ \alpha & 1 \end{pmatrix} &= -W(0)W(\alpha).
    \end{align*}
    
\noindent On the other hand,
    \begin{align*}
        \begin{pmatrix} \omega & 0 \\ 0 & \varpi \end{pmatrix}\begin{pmatrix} 1 & \alpha \\ 0 & 1 \end{pmatrix} &= \begin{pmatrix} 1 & \omega \alpha \varpi^{-1} \\ 0 & 1 \end{pmatrix}\begin{pmatrix} \omega & 0 \\ 0 & \varpi \end{pmatrix} \\
        \begin{pmatrix} \omega & 0 \\ 0 & \varpi \end{pmatrix}\begin{pmatrix} 1 & 0 \\ \alpha & 1 \end{pmatrix} &= \begin{pmatrix} 1 & 0 \\ \omega \alpha \varpi^{-1} & 1 \end{pmatrix}\begin{pmatrix} \omega & 0 \\ 0 & \varpi \end{pmatrix}.
    \end{align*}
    
\noindent Therefore, if we write an element in $E(\ZZ)$ as a product of upper and lower triangular matrices, we can move the non-trivial diagonal pieces to the right-hand side, and then rewrite everything in terms of $W(\alpha)$'s. This shows that the index of $\mathcal{W}$ in $R(\ZZ)$ can be at most $|R(\ZZ)^\times|^2$---since the unit group of $R(\ZZ)$ is necessarily finite, we conclude that $\mathcal{W}$ is a finite-index subgroup.
\end{proof}

\begin{definition}
For any non-zero $u \in R(\RR)_0$, define the \emph{restricted super-Apollonian packing} $\mathcal{S}_{R(\ZZ),u}$ to be the orbit of $S_u$ under the action of $\mathcal{W}$, where $S_u$ is the hyper-plane through $0$ in $\RR^{dim - 1} \cup \{\infty\}$ with normal vector $u$, oriented such that $u$ is in the interior of $S_u$. The \emph{super-Apollonian packing} is $\hat{\mathcal{S}}_{R(\ZZ),u} = \mathcal{S}_{R(\ZZ),u} \cup \mathcal{S}_{R(\ZZ),-u}$.
\end{definition}

\begin{remark}
Intuitively, the full super-Apollonian packing includes all of the spheres of the restricted one, but with both possible orientations.
\end{remark}

We will want to understand how spheres in $\hat{\mathcal{S}}_{R(\ZZ),u}$ can intersect. This is easiest to do in terms of inversive coordinates.

\begin{definition}
Given an oriented sphere $S$, we define its \emph{bend} $\kappa(S)$ to be 1/radius, taken to be
positive if $S$ is positively oriented, and negative otherwise. If $S$ is a plane, $\kappa(S) = 0$. The \emph{co-bend} $\kappa'(S)$ is the bend of the image of $S$ under the map $z \mapsto -z^{-1}$. If $S$ is not a plane, the \emph{bend-center} $\xi(S)$ is the product of the bend $\kappa(S)$ and the center of $S$. If $S$ is a plane, $\xi(S)$ is the unique unit normal vector to $S$, pointing in the direction of the interior of $S$.
\end{definition}

\noindent It is a standard exercise to check that
\begin{enumerate}
    \item $-\kappa(S)\kappa'(S) + \nrm(\xi(S)) = 1$ and
    \item $\kappa,\kappa',\xi$ are continuous functions.
\end{enumerate}

\noindent These are known as the \emph{inversive coordinates} for the oriented sphere---we shall henceforth write $\text{inv}(S) = (\kappa, \kappa',\xi)$. Inversive coordinates identify oriented spheres with the cone $q(\kappa,\kappa',\xi) = 1$ where $q(\kappa,\kappa',\xi) = -\kappa\kappa' + \nrm(\xi)$. This quadratic form has a corresponding bilinear form
    \begin{align*}
        b((s_1,t_1,\rho_1),(s_2,t_2,\rho_2)) &= \frac{1}{2}\left(q((s_1,t_1,\rho_1) + (s_2,t_2,\rho_2)) - q((s_1,t_1,\rho_1)) - q((s_2,t_2,\rho_2))\right) \\
        &= \frac{1}{2}\left(-s_1 t_2 - s_2 t_1 + \tr(\rho_1 \overline{\rho_2})\right)
    \end{align*}
    
\noindent which also has geometric meaning---if $S_1, S_2$ intersect, then $b(\text{inv}(S_1),\text{inv}(S_2)) = \cos(\theta)$, where $\theta$ is the angle of intersection between them. We say that $S_1, S_2$ \emph{intersect internally} if $\cos(\theta) = 1$; we say that they \emph{intersect externally} if $\cos(\theta) = -1$.

Given a matrix $\gamma \in G(\RR)$ and the inversive coordinates of an oriented sphere $S$, there is an easy way to compute the inversive coordinates of $\gamma.S$.

\begin{lemma}\label{actions coincide}
For any
    \begin{align*}
        \gamma = \begin{pmatrix} a & b \\ c & d \end{pmatrix} \in G(\RR),
    \end{align*}
    
\noindent and oriented sphere $S$ with inversive coordinates $(k_1,k_2,\beta)$, if
    \begin{align*}
        \begin{pmatrix} k_2' & \beta' \\ \overline{\beta'} & k_1' \end{pmatrix} = \begin{pmatrix} a & b \\ c & d \end{pmatrix} \begin{pmatrix} k_2 & \beta \\ \overline{\beta} & k_1 \end{pmatrix} \begin{pmatrix} \overline{a} & \overline{c} \\ \overline{b} & \overline{d} \end{pmatrix}
    \end{align*}
    
\noindent then $(k_1',k_2',\beta')$ are the inversive coordinates of $\gamma.S$.
\end{lemma}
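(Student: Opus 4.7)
The plan is to reduce the identity to a check on a generating set of $G(\RR)$, using functoriality of both sides. I would first package the inversive coordinates of an oriented sphere $S$ as the matrix
\begin{align*}
M_S = \begin{pmatrix} k_2 & \beta \\ \overline{\beta} & k_1 \end{pmatrix},
\end{align*}
and observe that the inversive-coordinate relation $-\kappa \kappa' + \nrm(\xi) = 1$ is precisely the statement that the formal determinant $k_1 k_2 - \nrm(\beta)$ equals $-1$. Writing $\gamma^\star = \begin{pmatrix} \overline{a} & \overline{c} \\ \overline{b} & \overline{d} \end{pmatrix}$, the conjugation $M \mapsto \gamma M \gamma^\star$ preserves Hermiticity, and it preserves the formal determinant because the embedding $G(\RR) \hookrightarrow SL(4,\CC)$ forces $\gamma$ to have reduced norm $1$. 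Thus the matrix-conjugation rule of the lemma sends inversive-coordinate matrices to inversive-coordinate matrices, just as the M\"obius action sends oriented spheres to oriented spheres.

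Next I would note that both operations are group actions: functoriality of matrix conjugation follows from $(\gamma_1 \gamma_2)^\star = \gamma_2^\star \gamma_1^\star$, and functoriality of the M\"obius action is standard. Consequently, it suffices to verify the identity for $\gamma$ ranging over a generating set. A convenient choice is the upper-triangular transvections $T_\alpha = \begin{pmatrix} 1 & \alpha \\ 0 & 1 \end{pmatrix}$ with $\alpha \in R(\RR)^+$, the diagonal matrices $\begin{pmatrix} \omega & 0 \\ 0 & \varpi \end{pmatrix}$, and the exchange matrix $J = \begin{pmatrix} 0 & 1 \\ -1 & 0 \end{pmatrix}$. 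For $T_\alpha$, the M\"obius action moves the center of a sphere by $\alpha$ and fixes its radius, which one reads off from the definitions of $\kappa, \kappa', \xi$ and checks matches conjugation by $T_\alpha$. The matrix $J$ implements $z \mapsto -z^{-1}$, which by the very definition of $\kappa'$ exchanges $\kappa \leftrightarrow \kappa'$ and negates $\xi$; this is exactly what conjugation by $J$ produces. The diagonal case implements $z \mapsto \omega z \varpi^{-1}$ and reduces to a direct expansion using $\overline{\omega \varpi} = \overline{\varpi}\,\overline{\omega}$.

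The main obstacle is keeping the bookkeeping straight in the quaternionic cases $dim = 4, 5$: one has to verify that $M_S$ is genuinely Hermitian with the correct convention (the standard involution for $SL(2, H_\RR)$, interacting appropriately with $\ddagger$ for $SL^\ddagger(2, H_\RR)$), and that the conjugation formula yields the correct answer despite noncommutativity. Once the conventions are pinned down this is routine, and the calculation is in fact a special case of the classical Vahlen matrix formalism for M\"obius transformations over associative algebras, which has appeared in this context in the second author's earlier work.
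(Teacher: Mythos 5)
Your proposal is correct and follows essentially the same route as the paper: encode the inversive coordinates as a Hermitian matrix, check that the conjugation $M \mapsto \gamma M \gamma^{\star}$ is a well-defined action preserving the cone $-k_1k_2 + \nrm(\beta) = 1$, and then verify agreement with the M\"obius action on the same three generators (translations, diagonals, and $J$). One small bookkeeping correction: conjugation by $J$ sends $\xi \mapsto -\overline{\xi}$ rather than $-\xi$, which is what $z \mapsto -z^{-1}$ actually does to the bend-center; also note the paper only needs to carry out this argument for $dim = 5$, citing earlier work for $dim = 3,4$, and handles planes and negatively oriented spheres by extending from $k_1 > 0$ by linearity and continuity.
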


\begin{proof}
For $\text{dim} = 3,4$, this was proved by Stange and the second author, respectively \cite{Sheydvasser2019,Stange2018}, so it remains to prove it for the case $\text{dim} = 5$. Note that
    \begin{align*}
        \sigma\left(\begin{pmatrix} a & b \\ c & d \end{pmatrix}\right) = \begin{pmatrix} \overline{a} & \overline{c} \\ \overline{b} & \overline{d} \end{pmatrix}
    \end{align*}
    
\noindent is an involution on $\Mat(2,H_\RR)$, the ring of $2 \times 2$ matrices with coefficients in $H_\RR$, and therefore
    \begin{align*}
        G(\RR) \times \Mat(2,H_\RR) &\rightarrow \Mat(2,H_\RR) \\
        (\gamma, M) &\mapsto \gamma M \sigma(\gamma)
    \end{align*}
    
\noindent is a well-defined action of $G(\RR)$ on $\Mat(2,H_\RR)$. The set of matrices in $\Mat(2,H_\RR)$ that are fixed by $\sigma$ are exactly the matrices of the form
    \begin{align*}
        \begin{pmatrix} k_2 & \beta \\ \overline{\beta} & k_1 \end{pmatrix}
    \end{align*}
    
\noindent where $k_1,k_2 \in \RR$---naturally, the action of $G(\RR)$ fixes this set. Additionally, if $-k_1 k_2 + \nrm(\beta) = 1$, then the same must be true of the image---this is because the Dieudonn\'e determinant is multiplicative and the Dieudonn\'e determinant of any matrix in the fixed set of $\sigma$ must be real. Therefore, we actually have an action on set of matrices in $\Mat(2,H_\RR)$ of the form
    \begin{align*}
        \begin{pmatrix} k_2 & \beta \\ \overline{\beta} & k_1 \end{pmatrix}
    \end{align*}
    
\noindent where $(k_1, k_2,\beta)$ are the inversive coordinates of an oriented sphere. It remains to show that this action matches the usual action of $G(\RR)$. Note that we only need to prove this for the case where $k_1 > 0$ since we can extend by linearity and continuity, and we only need to prove it for the generators of $G(\RR)$, which we can take to be
    \begin{align*}
        \begin{pmatrix} 1 & \tau \\ 0 & 1 \end{pmatrix}, \begin{pmatrix} \omega & 0 \\ 0 & \varpi \end{pmatrix}, \begin{pmatrix} 0 & 1 \\ -1 & 0 \end{pmatrix}
    \end{align*}
    
\noindent with $\nrm(\omega)\nrm(\varpi) = 1$. We calculate
    \begin{align*}
        \begin{pmatrix} 1 & \tau \\ 0 & 1 \end{pmatrix} \begin{pmatrix} k_2 & \beta \\ \overline{\beta} & k_1 \end{pmatrix} \begin{pmatrix} 1 & 0 \\ \overline{\tau} & 1 \end{pmatrix} &= \begin{pmatrix} * & \beta + k_1 \tau \\ \overline{\beta} + k_1\overline{\tau} & k_1 \end{pmatrix} \\
        \begin{pmatrix} \omega & 0 \\ 0 & \varpi \end{pmatrix} \begin{pmatrix} k_2 & \beta \\ \overline{\beta} & k_1 \end{pmatrix} \begin{pmatrix} \overline{\omega} & 0 \\ 0 & \overline{\varpi} \end{pmatrix} &= \begin{pmatrix} k_2 \nrm(\omega) & \omega \beta \overline{\varpi} \\ \varpi \overline{\beta}\overline{\omega} & k_1/\nrm(\omega) \end{pmatrix} \\
        \begin{pmatrix} 0 & 1 \\ -1 & 0 \end{pmatrix} \begin{pmatrix} k_2 & \beta \\ \overline{\beta} & k_1 \end{pmatrix} \begin{pmatrix} 0 & -1 \\ 1 & 0 \end{pmatrix} &= \begin{pmatrix} k_1 & -\overline{\beta} \\ -\beta & k_2 \end{pmatrix},
    \end{align*}
    
\noindent which indeed all match the expected action of $G(\RR)$.
\end{proof}

\begin{theorem}
Let $u \in R(\RR)^+$ be a unit vector. If
    \begin{align*}
        \gamma = \begin{pmatrix} a & b \\ c & d \end{pmatrix} \in G(\RR),
    \end{align*}
    
\noindent then
    \begin{align*}
        \kappa(\gamma.S_u) &= cu\overline{d} + d\overline{u}\,\overline{c} \\
        \kappa'(\gamma.S_u) &= au\overline{b} + b\overline{u}\,\overline{a} \\
        \xi(\gamma.S_u) &= au\overline{d} + b\overline{u}\,\overline{c}.
    \end{align*}
\end{theorem}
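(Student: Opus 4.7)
The plan is to read off the inversive coordinates of $S_u$ itself and then plug them into the matrix identity of Lemma \ref{actions coincide}; the result will drop out of a single $2 \times 2$ matrix multiplication.

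First I would determine $\text{inv}(S_u) = (\kappa(S_u), \kappa'(S_u), \xi(S_u))$. Since $S_u$ is a hyperplane, the definition of the bend gives $\kappa(S_u) = 0$ immediately. The hyperplane $S_u$ passes through the origin, and the M\"obius transformation $z \mapsto -z^{-1}$ swaps $0$ and $\infty$, so it sends any hyperplane through the origin to itself; hence the image is still a plane and $\kappa'(S_u) = 0$. Finally, because $u$ has unit norm and, by definition of $S_u$, is the unit normal vector pointing into the interior, we get $\xi(S_u) = u$. In summary, the matrix attached to $S_u$ in Lemma \ref{actions coincide} is
\begin{align*}
\begin{pmatrix} \kappa'(S_u) & \xi(S_u) \\ \overline{\xi(S_u)} & \kappa(S_u) \end{pmatrix} = \begin{pmatrix} 0 & u \\ \overline{u} & 0 \end{pmatrix}.
\end{align*}

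Next I would apply Lemma \ref{actions coincide} directly and expand the product
\begin{align*}
\begin{pmatrix} a & b \\ c & d \end{pmatrix}\begin{pmatrix} 0 & u \\ \overline{u} & 0 \end{pmatrix}\begin{pmatrix} \overline{a} & \overline{c} \\ \overline{b} & \overline{d} \end{pmatrix} = \begin{pmatrix} au\overline{b} + b\overline{u}\,\overline{a} & au\overline{d} + b\overline{u}\,\overline{c} \\ cu\overline{b} + d\overline{u}\,\overline{a} & cu\overline{d} + d\overline{u}\,\overline{c} \end{pmatrix}.
\end{align*}
Reading off the entries using the identification in the lemma ($(1,1)$-entry is $\kappa'$, $(2,2)$-entry is $\kappa$, $(1,2)$-entry is $\xi$) gives the three displayed formulas. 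A small consistency check, that the $(2,1)$-entry is indeed $\overline{\xi(\gamma.S_u)}$, follows from $\overline{au\overline{d} + b\overline{u}\,\overline{c}} = d\overline{u}\,\overline{a} + cu\overline{b}$ using that conjugation is an anti-involution and that $\overline{\overline{u}} = u$.

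There is no genuine obstacle here; the only subtlety worth highlighting is dimension-dependent non-commutativity in the $dim = 4,5$ cases, which is why every product must be kept in exactly the order produced by the matrix multiplication (no cyclic rearrangements of $a,b,c,d,u$). Since Lemma \ref{actions coincide} has already been verified in all three dimensions, and since the computation above is a purely formal identity in the ring $\Mat(2,H_\RR)$, the theorem follows at once.
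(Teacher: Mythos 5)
Your proposal is correct and is essentially the paper's own argument: identify $\text{inv}(S_u) = (0,0,u)$, apply Lemma \ref{actions coincide}, and read the formulas off the $2\times 2$ product $\gamma \left(\begin{smallmatrix} 0 & u \\ \overline{u} & 0 \end{smallmatrix}\right)\hat{\sigma}$-conjugate, exactly as in the paper's direct computation. The extra details you supply (justifying $\kappa'(S_u)=0$ via $0 \mapsto \infty$ and checking the $(2,1)$-entry) are fine but not needed beyond what the paper does.
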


\begin{proof}
By direct computation,
    \begin{align*}
        \begin{pmatrix} a & b \\ c & d \end{pmatrix}\begin{pmatrix} 0 & u \\ \overline{u} & 0 \end{pmatrix}\begin{pmatrix} \overline{a} & \overline{c} \\ \overline{b} & \overline{d} \end{pmatrix} &= \begin{pmatrix} au\overline{b} + b\overline{u}\,\overline{a} & au\overline{d} + b\overline{u}\,\overline{c} \\ * & cu\overline{d} + d\overline{u}\,\overline{c} \end{pmatrix},
    \end{align*}
    
\noindent and the claim follows from Lemma \ref{actions coincide}.
\end{proof}

If $u$ is not a unit vector, this is of course no longer quite correct---there is a scaling factor that has to be introduced.

\begin{definition}
Let $u \in R(\ZZ)_0^+$ be a non-zero vector. For any $\gamma \in G(\ZZ)$, define
    \begin{align*}
        \kappa_u(\gamma) &= cu\overline{d} - du\overline{c} \\
        \kappa_u'(\gamma) &= au\overline{b} - bu \overline{a} \\
        \xi_u(\gamma) &= au\overline{d} - bu\overline{c}.
    \end{align*}
    
\noindent We shall call $\text{inv}_u(\gamma.S_u) = (\kappa_u(\gamma),\kappa_u'(\gamma),\xi_u(\gamma))$ the \emph{normalized inversive coordinates} of $\gamma.S_u$. 
\end{definition}

\begin{remark}
It is easily seen that $\text{inv}_u(\gamma.S_u) = \sqrt{\nrm(u)}\text{inv}(\gamma.S_u)$, and therefore
    \begin{align*}
        b\left(\text{inv}_u(\gamma.S_u),\text{inv}_u(\gamma'.S_u)\right) = \nrm(u)b\left(\text{inv}(\gamma.S_u),\text{inv}(\gamma'.S_u)\right).
    \end{align*}
    
\noindent Thus, the conversion between the two kinds of inversive coordinates is quite simple---however, the normalized ones will be more convenient.
\end{remark}

\begin{theorem}\label{thm: congruence restriction using normalized covering vectors}
Let $u$ be a normalized covering vector for $R(\ZZ)$. Then for all $\gamma \in \mathcal{W}$, $\text{inv}_u(\gamma.S_u) = (0,0,u) \mod \nrm(u)$.
\end{theorem}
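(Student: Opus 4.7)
The plan is to induct on the word length of $\gamma \in \mathcal{W}$ in the generators $W(\alpha)^{\pm 1}$ with $\alpha \in R(\ZZ)^+$. The base case $\gamma = I$ is immediate from the defining formulas: $\text{inv}_u(S_u) = (0,0,u)$, which satisfies the congruence trivially. For the inductive step, I would write $\gamma = \begin{pmatrix} a & b \\ c & d \end{pmatrix}$ assumed to satisfy the congruence and expand $W(\alpha)\gamma = \begin{pmatrix} \alpha a + c & \alpha b + d \\ -a & -b \end{pmatrix}$. A routine expansion, using only that conjugation is an anti-involution and that $\bar u = -u$ (since $u \in R(\ZZ)_0$), yields
\begin{align*}
\kappa_u(W(\alpha)\gamma) &= \kappa_u'(\gamma), \\
\xi_u(W(\alpha)\gamma) &= -\alpha\,\kappa_u'(\gamma) - \overline{\xi_u(\gamma)}, \\
\kappa_u'(W(\alpha)\gamma) &= \alpha\,\kappa_u'(\gamma)\,\bar\alpha + \alpha\,\xi_u(\gamma) + \overline{\xi_u(\gamma)}\,\bar\alpha + \kappa_u(\gamma).
\end{align*}
Applying the inductive hypothesis, these reduce modulo $\nrm(u)$ to $0$, $-\bar u = u$, and $\alpha u + \bar u\bar\alpha$ respectively. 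The first two are already the target coordinates, so the entire argument hinges on the third term.

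The main obstacle, and really the only substantive one, is showing that $\alpha u + \bar u\bar\alpha \equiv 0 \pmod{\nrm(u)}$ for every $\alpha \in R(\ZZ)^+$. The decisive trick is that, because $\bar u = -u$, one has $\overline{\alpha u} = \bar u\bar\alpha$, so
\[\alpha u + \bar u\bar\alpha \;=\; \alpha u + \overline{\alpha u} \;=\; \tr(\alpha u),\]
a rational integer. But $\alpha u$ and $\overline{\alpha u} = \bar u\bar\alpha$ both lie in the two-sided ideal $\mathfrak{I} = R(\ZZ)\,u\,R(\ZZ)$, so their sum lies in $\mathfrak{I}\cap\ZZ$. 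By Theorem \ref{thm: covering vectors are excellent}, $|\mathfrak{I}\cap\ZZ| = \nrm(u)$, i.e.\ $\mathfrak{I}\cap\ZZ = \nrm(u)\ZZ$, and therefore $\tr(\alpha u) \in \nrm(u)\ZZ$, as required. The analogous calculation for $W(\alpha)^{-1}\gamma = \begin{pmatrix} -c & -d \\ a + \alpha c & b + \alpha d \end{pmatrix}$ produces expressions of exactly the same shape and is handled by the same $\tr(\alpha u) \in \nrm(u)\ZZ$ identity, closing the induction.
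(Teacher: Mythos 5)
Your proposal is correct and follows essentially the same route as the paper: an induction on the word length in the Cohn matrices $W(\alpha)$, where the recursion you derive directly from the definition of $\mathrm{inv}_u$ coincides with the paper's update rule (note $\alpha\,\kappa_u'\,\bar\alpha = \kappa_u'\,\nrm(\alpha)$ and $\alpha\xi_u + \overline{\xi_u}\,\bar\alpha = \tr(\alpha\,\xi_u)$), and the decisive step is the same ideal-theoretic observation that $\tr(\alpha u)\in\mathfrak{I}\cap\ZZ=\nrm(u)\ZZ$ via Theorem \ref{thm: covering vectors are excellent}. Your explicit treatment of $W(\alpha)^{-1}$ is a minor extra care the paper elides (since $W(\alpha)^{-1}=W(0)W(-\alpha)W(0)$), but otherwise the arguments are the same.
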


\begin{proof}
Let $\mathfrak{I}$ be the two-sided ideal generated by $u$; recall that $|\mathfrak{I} \cap \ZZ| = \nrm(u)$. Write $\gamma = W(a_n)\ldots W(a_1)$ and define recursively $(\kappa_1,\kappa_1',\xi_1) = (0,0,u)$ and
    \begin{align*}
        \begin{pmatrix} \kappa_{i + 1}' & \xi_{i + 1} \\ \overline{\xi_{i + 1}} & \kappa_{i + 1} \end{pmatrix} = W(a_i) \begin{pmatrix} \kappa_i' & \xi_i \\ \overline{\xi_i} & \kappa_i \end{pmatrix} \overline{W(a_i)}^T.
    \end{align*}
    
\noindent We shall prove by induction that $(\kappa_i,\kappa_i',\xi_i') = (0,0,u) \mod \nrm(u)$. This is obviously true for the base case. For every subsequent step, note that
    \begin{align*}
        \begin{pmatrix} a & 1 \\ -1 & 0 \end{pmatrix}\begin{pmatrix} \kappa' & \xi \\ \overline{\xi} & \kappa \end{pmatrix}\begin{pmatrix} \overline{a} & -1 \\ 1 & 0 \end{pmatrix} &= \begin{pmatrix} \kappa'\nrm(a) + \tr(a\xi) + \kappa & -\kappa' a - \overline{\xi} \\ -\kappa'\overline{a} - \xi & \kappa' \end{pmatrix}.
    \end{align*}
    
\noindent By assumption, $\xi = u + \nrm(u)\alpha$ for some $\alpha \in R(\ZZ)$, hence $a\xi \in \mathfrak{I}$ and so $\tr(a\xi) \in \mathfrak{I} \cap \ZZ$. Putting this together, we get that
    \begin{align*}
        \begin{pmatrix} \kappa'\nrm(a) + \tr(a\xi) + \kappa & -\kappa' a - \overline{\xi} \\ -\kappa'\overline{a} - \xi & \kappa' \end{pmatrix} = \begin{pmatrix} 0 & u \\ -u & 0 \end{pmatrix} \mod \nrm(u),
    \end{align*}
    
\noindent which proves the theorem.
\end{proof}

\begin{corollary}
Let $u$ be a normalized covering vector for $R(\ZZ)$. If $\nrm(u) > 3$, then all intersections in $\mathcal{S}_{R(\ZZ),u}$ are internal.
\end{corollary}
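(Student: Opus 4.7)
The plan is to refine the mod $\nrm(u)$ congruence of Theorem \ref{thm: congruence restriction using normalized covering vectors} into a mod $\nrm(u)^2$ congruence on the bilinear form applied to the normalized inversive coordinates of two spheres in $\mathcal{S}_{R(\ZZ),u}$, and then use the resulting integrality constraint to pin down $\cos\theta = 1$.

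Let $S_1, S_2 \in \mathcal{S}_{R(\ZZ),u}$ have normalized inversive coordinates $(\kappa_i, \kappa_i', \xi_i)$. By Theorem \ref{thm: congruence restriction using normalized covering vectors}, I may write $\kappa_i, \kappa_i' \in \nrm(u)\ZZ$ and $\xi_i = u + \nrm(u)\alpha_i$ with $\alpha_i \in R(\ZZ)$; then $\kappa_1 \kappa_2'$ and $\kappa_2\kappa_1'$ are automatically divisible by $\nrm(u)^2$. Using $\overline{u} = -u$ and $u\overline{u} = \nrm(u)$, a direct expansion yields
\begin{align*}
\xi_1\overline{\xi_2} = \nrm(u) + \nrm(u)(u\overline{\alpha_2} - \alpha_1 u) + \nrm(u)^2\alpha_1\overline{\alpha_2}.
\end{align*}
The decisive step is then to show that, after taking traces, the middle contribution also vanishes modulo $\nrm(u)^2$. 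For this I would invoke Theorem \ref{thm: covering vectors are excellent}: the two-sided ideal $\mathfrak{I}$ generated by $u$ satisfies $\mathfrak{I} \cap \ZZ = \nrm(u)\ZZ$. Since $u$ and $\overline{u} = -u$ both lie in $\mathfrak{I}$, the elements $u\overline{\alpha_2}$, $\alpha_1 u$ and their conjugates all lie in $\mathfrak{I}$, so their traces lie in $\mathfrak{I} \cap \ZZ = \nrm(u)\ZZ$. Assembling the pieces gives
\begin{align*}
2 b\!\left(\mathrm{inv}_u(S_1), \mathrm{inv}_u(S_2)\right) \equiv 2\nrm(u) \pmod{\nrm(u)^2}.
\end{align*}

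Finally, translating back via $b(\mathrm{inv}_u(S_1), \mathrm{inv}_u(S_2)) = \nrm(u)\, b(\mathrm{inv}(S_1), \mathrm{inv}(S_2))$, if $S_1, S_2$ intersect at angle $\theta$ then $2\cos\theta = 2 + \nrm(u) k$ for some $k \in \ZZ$. Since $|2\cos\theta| \leq 2$, this forces $\nrm(u) k \in [-4, 0]$. Theorem \ref{thm: covering vectors are excellent} also guarantees that $\nrm(u)$ is square-free, so the hypothesis $\nrm(u) > 3$ actually implies $\nrm(u) \geq 5$; the only integer solution is then $k = 0$, yielding $\cos\theta = 1$.

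The main obstacle I anticipate is the refinement of the congruence from mod $\nrm(u)$ to mod $\nrm(u)^2$: it is the sharp statement $\mathfrak{I}\cap \ZZ = \nrm(u)\ZZ$ that makes the linear-in-$\nrm(u)$ correction disappear. A weaker ideal relation would leave room for $\cos\theta = -1$ (external tangency). The square-freeness of $\nrm(u)$ is a parallel technical point, needed to skip past the borderline value $\nrm(u) = 4$ where the congruence alone still permits $\cos\theta = -1$.
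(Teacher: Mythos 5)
Your proof is correct and follows essentially the same route as the paper: refine the congruence of Theorem \ref{thm: congruence restriction using normalized covering vectors} to a statement modulo $\nrm(u)^2$ on the bilinear form, using $\tr(\mathfrak{I}) \subseteq \mathfrak{I}\cap\ZZ = \nrm(u)\ZZ$, and then use square-freeness to rule out $\nrm(u)=4$ and force $\cos\theta = 1$. The only cosmetic difference is that the paper first applies the $G(\RR)$-invariance of $b$ to replace the pair $(\gamma.S_u,\gamma'.S_u)$ by $(S_u,\gamma^{-1}\gamma'.S_u)$, so that one argument is exactly $(0,0,u)$ and only a single cross term $\frac{\nrm(u)}{2}\tr(u\overline{\alpha})$ needs the ideal argument, whereas you expand both coordinates in general position.
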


\begin{remark}
As we previously established that $|\discrd(R(\ZZ))|$ grows linearly in $\nrm(u)$, so we know that this corollary implies that for all but finitely many orders $R(\ZZ)$, $\mathcal{S}_{R(\ZZ),u}$ has only internal intersections.
\end{remark}

\begin{remark}
This result is not quite sharp in the sense that even if $\nrm(u) \leq 3$, it is still possible that the intersections in $\mathcal{S}_{R(\ZZ),u}$ are internal---this happens for $\ZZ[\sqrt{-2}]$, for example.
\end{remark}

\begin{proof}
For any $\gamma.S_u, \gamma'.S_u \in \mathcal{S}_{R(\ZZ),u}$,
    \begin{align*}
        b\left(\text{inv}_u(\gamma.S_u),\text{inv}_u(\gamma'.S_u)\right) &= b\left(\text{inv}_u(S_u),\text{inv}_u(\gamma^{-1}\gamma'.S_u)\right) \\
        &= b\left((0,0,u),(0,0,u) + \nrm(u)(s,t,\alpha)\right)
    \end{align*}
    
\noindent for some $s,t \in \ZZ$ and $\alpha \in R(\ZZ)^+$, whence
    \begin{align*}
        b\left((0,0,u),(0,0,u) + \nrm(u)(s,t,\alpha)\right) &= \nrm(u) + \frac{\nrm(u)}{2}\tr(u\overline{\alpha}) \\
        &\in \nrm(u) + \frac{\nrm(u)^2}{2}\ZZ.
    \end{align*}
    
\noindent The statement immediately follows, since the norm of $u$ is square-free, so if $\nrm(u) > 3$, then in fact $\nrm(u) \geq 5$.
\end{proof}

It immediately follows from this that if the discriminant of $R(\ZZ)$ is large enough, then the tangency structure of $\mathcal{S}_{R(\ZZ),u}$ is extremely simple.

\begin{theorem}\label{thm: tangency structure}
Let $u$ be a normalized covering vector for $R(\ZZ)$ and $\nrm(u) > 3$. If $S_1,S_2 \in \mathcal{S}_{R(\ZZ),u}$ intersect, then they are internally tangent at some point $\rho \in R(\QQ)$. Furthermore, there exists $\gamma \in \mathcal{W}$ and $\tau \in R(\ZZ)^+$ such that $S_1 = \gamma.S_u$ and $S_2 = \gamma.(S_u + \tau)$.
\end{theorem}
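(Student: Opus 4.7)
The plan is to combine the internal tangency result from the preceding corollary with the congruence structure of normalized inversive coordinates from Theorem \ref{thm: congruence restriction using normalized covering vectors}, and then use the $\mathcal{W}$-action to simultaneously move $S_u$ onto $S_1$ and the point at infinity onto the tangent point.

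First, since $\nrm(u) > 3$, the preceding corollary forces $b(\text{inv}_u(S_1), \text{inv}_u(S_2)) = \nrm(u)$ whenever $S_1$ and $S_2$ intersect, so they meet at a single tangent point $\rho$. To identify $\rho$, I would write $S_1 = \gamma_1.S_u$ and $S_2 = \gamma_1 \eta.S_u$ with $\gamma_1, \eta \in \mathcal{W}$, so that $S_u$ and $\eta.S_u$ are internally tangent at $\rho_1 := \gamma_1^{-1}(\rho)$. Theorem \ref{thm: congruence restriction using normalized covering vectors} provides $\text{inv}_u(\eta.S_u) = (\nrm(u) s,\, \nrm(u) t,\, u + \nrm(u)\alpha)$ for some $s, t \in \ZZ$ and $\alpha \in R(\ZZ)^+$. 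Setting $b(\text{inv}_u(S_u), \text{inv}_u(\eta.S_u)) = \nrm(u)$ forces $\tr(u\overline{\alpha}) = 0$, while the inversive-coordinate quadric $-\kappa_u\kappa_u' + \nrm(\xi_u) = \nrm(u)$ collapses to $\nrm(\alpha) = st$. A direct computation of the point of tangency between $\eta.S_u$ and the hyperplane $S_u$ then yields $\rho_1 = \alpha/s$ when $s \neq 0$, and $\rho_1 = \infty$ when $s = 0$ (in which case definiteness of $R(\QQ)$ forces $\alpha = 0$, so $\eta.S_u$ is a hyperplane parallel to $S_u$). Either way $\rho \in R(\QQ) \cup \{\infty\}$.

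For the second claim I would produce $\gamma$ and $\tau$ by finding $h \in \mathrm{Stab}_{\mathcal{W}}(S_u)$ with $h.\infty = \rho_1$ and setting $\gamma := \gamma_1 h$, so that $\gamma.S_u = S_1$ and $\gamma.\infty = \rho$ simultaneously. Then $\gamma^{-1}.S_2$ is internally tangent to $S_u$ at $\infty$, hence a hyperplane parallel to $S_u$ with normalized inversive coordinates of the form $(0, \nrm(u) t', u)$ for some $t' \in \ZZ$. Since $T_\tau \in \mathcal{W}$ for every $\tau \in R(\ZZ)^+$ (via $T_\tau = W(0)^2 W(-\tau) W(0)$) and $\kappa_u'(T_\tau.S_u) = \tr(u\overline{\tau})$, I would match $\gamma^{-1}.S_2$ to $S_u + \tau$ for an appropriate $\tau \in R(\ZZ)^+$, using the observation that the parallel hyperplanes in the orbit $\mathcal{W}.S_u$ are exactly the $T_\tau$-translates of $S_u$.

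The main obstacle is the group-theoretic existence of $h$: one must show that $\mathrm{Stab}_{\mathcal{W}}(S_u)$ acts transitively on the $R(\QQ)$-rational points of $S_u$ together with $\infty$. For $dim = 3$ this is immediate because $\mathrm{Stab}_{\mathcal{W}}(S_u) \supset SL(2, \ZZ)$ (generated by those $W(a)$ with $a \in \ZZ \subset R(\ZZ)$), which acts transitively on $\QQ \cup \{\infty\}$. For $dim = 4, 5$ a more delicate argument is required: one isolates a commutative subring of $R(\ZZ)$ lying in $S_u$ whose associated $W(a)$'s generate a sub-Bianchi-type group inside $\mathrm{Stab}_{\mathcal{W}}(S_u)$ acting transitively on the necessary rational points. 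The explicit covering vectors classified in Theorem \ref{thm: covering vector classification} make this feasible order-by-order, and confirming the parallel-hyperplane structure of $\mathcal{W}.S_u$ reduces to the same transitivity statement by symmetry.
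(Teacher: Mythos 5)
Your first half is fine and essentially matches the paper: internal tangency from the preceding corollary, and rationality of the tangent point by computing it as the projection of the center $\xi_u/\kappa_u$ onto $S_u$ (your explicit $\rho_1 = \alpha/s$ with $\tr(u\overline{\alpha}) = 0$ is a correct elaboration of this). The problem is the second half. The entire content of the theorem beyond tangency is the existence of $h \in \mathrm{Stab}_{\mathcal{W}}(S_u)$ sending the rational tangent point to $\infty$, and you explicitly leave this unproven for $dim = 4,5$, offering only a plan (``isolate a commutative subring \ldots sub-Bianchi-type group \ldots feasible order-by-order'') that is never carried out. That is a genuine gap, not a detail: transitivity of the stabilizer on rational points is precisely where the covering-vector hypothesis must enter, and nothing in your sketch uses it except as a pointer to the classification tables.

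The paper closes this step uniformly, with no case analysis: since $u$ is a covering vector, the points of $S_u \cap R(\ZZ)^+$ cover $S_u$ by open unit balls, which is exactly the division-step condition needed to run the Euclidean (resp.\ $\ddagger$-Euclidean) algorithm with quotients $a \in S_u \cap R(\ZZ)^+$. Writing $\rho = \alpha\beta^{-1}$ with $\alpha,\beta \in R(\ZZ)$, each division step is implemented by a matrix $W(a)$ with $a \in S_u \cap R(\ZZ)^+$ (which stabilizes $S_u$, since $z \mapsto -a - z^{-1}$ preserves the hyperplane through $0$ and $1$ orthogonal to the trace-zero vector $u$), and termination of the algorithm at a pair $(g,0)$ is exactly the statement that the composite element of $\mathrm{Stab}_{\mathcal{W}}(S_u)$ carries $\rho$ to $\infty$. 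If you want to salvage your proposal, replace the order-by-order transitivity claim with this argument; your $dim = 3$ observation ($SL(2,\ZZ)$ acting on $\QQ \cup \{\infty\}$) is just the special case where the algorithm is the ordinary Euclidean algorithm over $\ZZ$. The concluding identification of $\gamma^{-1}.S_2$ with $S_u + \tau$ is then as in the paper: the congruence $\mathrm{inv}_u \equiv (0,0,u) \bmod \nrm(u)$ forces any plane in the orbit parallel to $S_u$ to be a translate $S_u + \tau$ with $\tau \in R(\ZZ)^+$.
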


\begin{proof}
Choose $S_1 = \gamma_1.S_u$, $S_2 = \gamma_2.S_u$. By the preceding result, we know that any intersections must be internal. Note that as well that the desired result is true for $S_1,S_2$ if and only if it is true for $S_u$, $\gamma_1^{-1}\gamma_2.S_u$, so we can in fact assume without loss of generality that $S_1 = S_u$, $S_2 = \gamma'.S_u$. Additionally, we may assume without loss of generality that $S_2$ is not a plane. Then the point of intersection $\rho$ will be the projection of $\xi(\gamma')/\kappa(\gamma')$ onto $S_u$, which is certainly in $R(\QQ)^+$. Since it is a rational point, we know that there is an element in $\text{Stab}_\mathcal{W}(S_u)$ that moves it to $\infty$. Why is this? By assumption, $S_u \cap R(\ZZ)^+$ is covered by open unit balls. Represent the intersection point $\rho$ as a pair $(\alpha,\beta) \in R(\ZZ)$ such that $\alpha\beta^{-1} = \rho$. We can then apply the Euclidean or $\ddagger$-Euclidean algorithm to this pair, which corresponds to acting by matrices
    \begin{align*}
        \begin{pmatrix} a & 1 \\ -1 & 0 \end{pmatrix}
    \end{align*}
    
\noindent with $a \in R(\ZZ)^+ \cap S_u$, and this will send $(\alpha,\beta)$ to $(g,0)$---this exactly corresponds to those matrices moving $\rho$ to $\infty$. Therefore, there exists $\gamma \in \mathcal{W}$ such that $\gamma.S_u = S_u$ and $\gamma.S_2$ is a plane parallel to $S_u$---the only possible such planes in $\mathcal{S}_{R(\ZZ),u}$ are $S_u + \tau$ for some $\tau \in R(\ZZ)^+$.
\end{proof}

\section{Apollonian-Type Packings:}\label{section: apollonian type}

We are finally ready to define the Apollonian-type packings.

\begin{definition}
Let $u$ be a normalized covering vector for $R(\ZZ)$ such that $\nrm(u) > 3$. Let $\tau$ be any element of $R(\ZZ)^+$ that is in the interior of $S_u$ and such that $S_u \cap R(\ZZ)^+$ and $\tau$ together generate the full lattice $R(\ZZ)^+$. We define $\Gamma_u$ to be the subgroup of $O^+(\text{dim},1)$ generated by translations $z \mapsto z + \omega$ for $\omega \in R(\ZZ)^+ \cap S_u$, and all reflections through spheres of radius $1$ centered at points in $R(\ZZ)^+ \cap S_u$ and $R(\ZZ)^+ \cap (S_u + \tau)$.

The corresponding \text{$(R(\ZZ),u)$-Apollonian packing} $\mathcal{A}_{R(\ZZ),u}$ is the orbit of $S_u$ and $-S_u + \tau$ under the action of $\Gamma_u$.
\end{definition}

\begin{remark}
The existence of at least one such element $\tau$ is guaranteed by the fact that $R(\ZZ)^+$ is a free abelian group and $S_u \cap R(\ZZ)^+$ is a subgroup of it; consequently, there must exist a basis of $R(\ZZ)^+$ that restricts to a basis of $S_u \cap R(\ZZ)^+$. The choice of $\tau$ must be unique up to addition by an element in $S_u \cap R(\ZZ)^+$. Consequently, this choice does not affect $\Gamma_u$.
\end{remark}

One useful observation is that if $\phi_z$ is a reflection through the unit sphere centered at $z \in R(\ZZ)^+$ and $T_w$ is a translation by $w \in R(\ZZ)^+$, then $\phi_z \circ T_w = T_w \circ \phi_{z - w}$. This means that any element in $\Gamma_u$ can be written in the form
    \begin{align*}
        \phi_{z_1} \circ \phi_{z_2} \circ \ldots \circ \phi_{z_n} \circ T_w
    \end{align*}
    
\noindent for some
    \begin{align*}
        z_1, \ldots z_n \in \left(R(\ZZ)^+ \cap S_u\right) \cup \left(R(\ZZ)^+ \cap (S_u + \tau)\right)
    \end{align*}
    
\noindent and $w \in R(\ZZ)^+ \cap S_u$. This allows us to prove some nice properties of the group $\Gamma_u$.

\begin{theorem}\label{thm: geometric finiteness}
Let $u$ be a normalized covering vector for $R(\ZZ)$ such that $\nrm(u) > 3$. Then $\Gamma_u$ admits a fundamental domain in $\HH^{dim}$ which is a convex, infinite volume, (generalized) hyperbolic polytope with finitely many sides---in particular, $\Gamma_u$ is geometrically finite.
\end{theorem}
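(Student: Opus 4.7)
The plan is to build an explicit Ford-type fundamental domain in the upper half-space model of $\HH^{dim}$ with boundary $\RR^{dim-1} \cup \{\infty\}$. Let $T \le \Gamma_u$ denote the translation subgroup generated by $\{T_\omega : \omega \in R(\ZZ)^+ \cap S_u\}$; this is free abelian of rank $dim - 2$ and fixes the cusp $\infty$. Choose a bounded fundamental parallelepiped $P$ for the action of $T$ on $S_u$, and set $C = \{x \in \HH^{dim} : \pi_{S_u}(x) \in P\}$, where $\pi_{S_u}$ is the orthogonal projection $\RR^{dim-1} \to S_u$. Then $C$ is a convex hyperbolic polyhedron bounded by $2(dim - 2)$ vertical Euclidean hyperplanes and is a fundamental domain for $T$ acting on $\HH^{dim}$. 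For each $z$ in $\Lambda := (R(\ZZ)^+ \cap S_u) \cup (R(\ZZ)^+ \cap (S_u + \tau))$, let $B_z$ denote the open hyperbolic half-ball above the closed unit boundary sphere centered at $z$. My proposed fundamental domain is $\mathcal{F} = C \setminus \bigcup_{z \in \Lambda} B_z$.

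I would check finite-sidedness as follows: the hemisphere $\partial B_z$ can meet $C$ only when $d(\pi_{S_u}(z), P) < 1$. Since $\pi_{S_u}$ sends $R(\ZZ)^+ \cap S_u$ and $R(\ZZ)^+ \cap (S_u + \tau)$ to two rank-$(dim-2)$ lattices inside $S_u$, each meets the bounded neighborhood $\{q \in S_u : d(q, P) < 1\}$ in only finitely many points, so only finitely many $B_z$ cut $C$. Convexity is immediate, since $\mathcal{F}$ is then the intersection of finitely many hyperbolic half-spaces (the chimney walls together with the exteriors of the relevant $B_z$). Infinite volume holds because $\mathcal{F}$ contains $\{x \in C : \|\pi_{S_u^\perp}(x)\| > M,\; y(x) \ge 1\}$ for $M$ large enough, and this region has infinite hyperbolic volume since the Euclidean perpendicular direction is unbounded while $\int_1^\infty y^{-dim}\,dy$ is finite.

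The core step is showing that $\mathcal{F}$ actually parameterizes $\Gamma_u$-orbits. For existence of a representative, given $p \in \HH^{dim}$ first translate using $T$ to arrange $\pi_{S_u}(p) \in P$, so $p \in C$. If $p \in B_z$ for some $z \in \Lambda$, apply $\phi_z$: a direct calculation gives $y(\phi_z(p)) = y(p)/\|p - z\|^2 > y(p)$, a strict increase of vertical coordinate (since $\|p-z\| < 1$ inside the hemisphere). Using the identity $\phi_z \circ T_w = T_w \circ \phi_{z-w}$ noted in the paragraph preceding the theorem, I interleave each reflection with the translation needed to restore $C$. Since $\Gamma_u$ is a discrete subgroup of $\Isom(\HH^{dim})$, the orbit of $p$ is discrete in $\HH^{dim}$ and its height values form a discrete subset of $(0, \infty)$, so the strictly increasing height sequence must terminate, delivering a representative in $\mathcal{F}$.

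The main obstacle is the uniqueness half of the fundamental-domain property: two distinct interior points of $\mathcal{F}$ must not be $\Gamma_u$-equivalent. I would handle this by verifying the hypotheses of the Poincar\'e polyhedron theorem for the evident side pairings---translations pair opposite chimney walls, and each surviving hemisphere $\partial B_z$ is self-paired by the reflection $\phi_z$---which reduces to checking cycle conditions at each codimension-$2$ face. These cycle checks split into hemisphere--wall and hemisphere--hemisphere cases, and can be carried out by direct computation using the arithmetic structure of $R(\ZZ)$ together with the covering-vector hypothesis (and, where hemispheres from $S_u$ meet hemispheres from $S_u + \tau$, the tangency control provided by Theorem \ref{thm: tangency structure}). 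Once Poincar\'e's theorem produces $\mathcal{F}$ as a fundamental polyhedron, the theorem is proved: admitting a convex, finite-sided fundamental polyhedron in $\HH^{dim}$ is one of the standard equivalent definitions of geometric finiteness.
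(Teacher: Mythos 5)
Your domain $\mathcal{F} = C \setminus \bigcup_{z \in \Lambda} B_z$ is exactly the one the paper constructs (there written as $\mathcal{R} \cap (\mathcal{P}\times(0,\infty))$), and your finite-sidedness, convexity, and infinite-volume checks are fine. The difficulty is that you have deferred precisely the step that carries all of the content. The paper does not invoke Poincar\'e's theorem generically: it closes the uniqueness gap with the specific claim that intersecting unit hemispheres centered in $\Lambda$ meet at dihedral angles of the form $\pi/k$ (because squared distances between centers are integers), so that the subgroup generated by the $\phi_z$ is a geometric reflection group with chamber $\mathcal{R}$. Your ``cycle checks \ldots can be carried out by direct computation'' is exactly this verification; until it is done, Poincar\'e's theorem simply does not apply, so nothing has been proved.

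Worse, when you carry the check out it fails for $\mathcal{F}$ as written, and you should be aware that the published one-line justification glosses over the same point. Take $z = 0$ and $z' = 1$, both in $\ZZ \subseteq S_u \cap R(\ZZ)^+$, so always present. The dihedral angle of the region \emph{exterior} to both unit hemispheres along their ridge is $2\pi/3$, not $\pi/k$ (the angle $\pi/3$ read off from the inversive bilinear form is the supplementary one; the two agree only at $\pi/2$). Concretely, $\phi_0\phi_1\phi_0$ is the Euclidean reflection in the perpendicular bisector hyperplane of $[0,1]$; it preserves a neighborhood of the ridge inside $\mathcal{F}$ and identifies pairs of distinct interior points, so $\mathcal{F}$ is not a fundamental domain and your proposed self-pairings violate Poincar\'e's angle condition. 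The repair is standard but must be made explicit: adjoin the bisector walls fixed by the conjugated reflections $\phi_z\phi_{z'}\phi_z$ for $\|z-z'\|=1$ (and $\|z - z'\| = \sqrt{3}$ where it occurs), cutting $\mathcal{F}$ down to a genuine Coxeter chamber, which remains convex, finite-sided, and of infinite volume --- or observe that $\mathcal{F}$ is a finite union of such chambers, which still yields geometric finiteness. Two smaller gaps: you use discreteness of $\Gamma_u$ before establishing it (it follows because $\Gamma_u$ lies in the group generated by $G(\ZZ)$ and $z \mapsto -\overline{z}$), and an increasing sequence in a discrete subset of $(0,\infty)$ need not terminate --- you need the bounds $y(\phi_z(p)) \leq 1/y(p)$ and $\|\phi_z(p) - z\| = 1/\|p - z\| \leq 1/y(p)$ to confine the orbit points you encounter to a compact set before discreteness forces termination.
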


\begin{proof}
First, consider the action of the translations $T_\alpha$ on $\RR^{\text{dim} - 1}$. This group of translations admits a Dirichlet domain $\mathcal{P}$, which will be a generalized Euclidean polytope with finitely many sides and infinite volume, since $R(\ZZ)^+ \cap S_u$ only has rank $dim - 2$. Then, if we take $\mathcal{P} \times (0,\infty) \subset \HH^{\text{dim}}$, this is a fundamental domain for the action of this translation group on $\HH^{\text{dim}}$.

Next, note that unit spheres centered at points in $R(\ZZ)^+$ intersect at dihedral angles which are of the form $\pi/k$ for some $k \in \ZZ$, since they must be $\sqrt{n}$ distance apart for some integer $n$. Consequently, the subgroup of $\Gamma_u$ generated by reflections $\phi_\alpha$ is a geometric reflection group. It has a simple fundamental domain $\mathcal{R} \subset \HH^{\text{dim}}$: take the set of points in $\HH^{\text{dim}}$ which are in the exterior of all of the unit spheres centered at points $\alpha$. We claim that 
    \begin{align*}
        \mathcal{F} = \mathcal{R} \cap \left(\mathcal{P} \times (0,\infty)\right)
    \end{align*}
    
\noindent is then a fundamental domain for $\Gamma_u$. Any point $\rho \in \HH^\text{dim}$ can be moved into the closure of $\mathcal{F}$ by first using the reflection $\phi_\alpha$ to move it into $\mathcal{R}$---but then, one can use translations $T_\alpha$ to move it into $\mathcal{F}$. On the other hand, we need to prove that if $\rho \in \mathcal{F}$, then there does not exist any non-identity element $\gamma \in \Gamma_u$ such that $\gamma(\rho) \in \mathcal{F}$. As we discussed above,
    \begin{align*}
        \gamma = \phi_{\alpha_1} \circ \ldots \circ \phi_{\alpha_n} \circ T_{\beta}
    \end{align*}
    
\noindent for some $\alpha_1,\ldots, \alpha_n \in (R(\ZZ)^+ \cap S_u) \cup (R(\ZZ)^+ \cap S_u + \tau)$ and  $\beta \in R(\ZZ)^+$. If $\gamma$ can be written without any reflections $\phi_\alpha$, then we use the fact that $\mathcal{P} \times (0,\infty)$ is a fundamental domain for the translation subgroup to conclude that $T_\beta(\rho) = \rho$ if and only if $T_\beta$ is the identity. Otherwise, we note that $T_\beta$ will move $\rho$ to some other point in $\mathcal{R}$, and then the reflections must necessarily move it out of $\mathcal{R}$, by virtue of the fact that $\mathcal{R}$ is a fundamental domain for the reflection group.

Examining the fundamental domain $\mathcal{F}$ that we have constructed, it is clear that it is a convex, generalized hyperbolic polytope with finitely many sides and infinite volume.
\end{proof}

\begin{theorem}\label{thm: zariski closure}
The Zariski closure of $\Gamma_u$ is $\Isom(\HH^{dim})$. Consequently, since $\Gamma_u$ has infinite co-volume, it is a thin group.
\end{theorem}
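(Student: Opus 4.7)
The plan is to reduce everything to a single Zariski density statement: once we know the Zariski closure of $\Gamma_u$ is all of $\Isom(\HH^{dim})$, thinness is immediate from the infinite covolume already established in Theorem \ref{thm: geometric finiteness} (the fundamental domain constructed there had infinite hyperbolic volume). For the Zariski density statement, I would appeal to the classification of proper real algebraic subgroups of $G = \Isom(\HH^{dim})$: up to conjugation, any such subgroup either has a finite orbit on $\partial \HH^{dim}$ (being contained in a parabolic or in a compact subgroup) or else preserves a proper totally geodesic subspace $V \subsetneq \HH^{dim}$. It therefore suffices to rule out both possibilities for $\Gamma_u$ itself.

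I would first handle the case of a finite $\Gamma_u$-invariant set $F \subset \partial\HH^{dim}$. The translations $T_\alpha \in \Gamma_u$ for $0 \neq \alpha \in R(\ZZ)^+ \cap S_u$ must preserve $F$, and each such translation has $\infty$ as its unique boundary fixed point; this forces $F \subseteq \{\infty\}$. But the reflection $\phi_0 \in \Gamma_u$ sends $\infty$ to $0$, contradicting $\Gamma_u$-invariance.

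For the second case, assume $\Gamma_u$ preserves a totally geodesic subspace $V$ of dimension $k < dim$. The translation subgroup fixes $\infty$, so preservation of $V$ by these parabolic elements forces $\infty \in \partial V$, making $\partial V$ a $(k - 1)$-dimensional affine plane in $\RR^{dim - 1}$. Invariance under the rank-$(dim - 2)$ lattice $R(\ZZ)^+ \cap S_u$, which spans $S_u$ as a real vector space, forces this plane to contain a translate of $S_u$, so $k - 1 \geq dim - 2$. The reflection $\phi_0$ then sends $\infty \in \partial V$ to $0 \in \partial V$, pinning down $\partial V = S_u$ and $k = dim - 1$. But for any $z \in R(\ZZ)^+ \cap (S_u + \tau)$ we have $z \notin S_u$, and the reflection $\phi_z \in \Gamma_u$ through the unit sphere centered at $z$ does not preserve the hyperplane $S_u$---contradiction.

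The hardest step in practice is invoking the subgroup classification in a suitably clean form (perhaps citing a reference on Zariski closures of discrete subgroups of rank-one Lie groups, or deriving it directly from the rank-one structure of $O^+(dim,1)$); once that is in hand, the geometric contradictions above are routine. Note finally that Zariski density in the identity component upgrades automatically to density in the full isometry group, because $\Gamma_u$ contains the orientation-reversing reflections $\phi_z$.
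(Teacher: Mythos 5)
Your argument is correct in substance but follows a genuinely different route from the paper. The paper works infinitesimally: it takes the Lie algebra $\mathfrak{g}$ of the Zariski closure, notes that the rank-$(dim-2)$ group of lattice translations forces all nilpotent directions along $S_u$ into $\mathfrak{g}$, and then manufactures the remaining $\tau$-direction by bracketing with conjugates under the inversions $\phi_\beta \in \Gamma_u$, so that $\mathfrak{g}$ is the full Lie algebra of $G(\RR)$; the reflections then exclude the identity component, exactly as in your final remark. You instead quote the structure theory of rank-one groups---every proper algebraic subgroup of $\Isom(\HH^{dim})$ stabilizes a finite subset of $\partial\HH^{dim}$ or a proper totally geodesic subspace---and eliminate both configurations using the parabolics $T_\alpha$ and the inversions $\phi_0$, $\phi_z$ with $z \in R(\ZZ)^+\cap(S_u+\tau)$; those eliminations are correct. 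Two caveats: (i) your parenthetical is inaccurate, since a compact subgroup (e.g.\ a maximal compact) need not have a finite boundary orbit---it fixes a point of $\HH^{dim}$, so the classification must allow $V$ to be a point, a case your parabolics still dispose of because $T_\alpha$ preserves no bounded subset of $\HH^{dim}$; (ii) that classification is precisely the nontrivial input, and you leave it as an appeal to the literature, so a precise citation is needed---equivalently, since $\Gamma_u$ is discrete, one can invoke the standard criterion that a non-elementary discrete subgroup whose limit set lies in no proper round sphere is Zariski dense, which applies here because the limit set contains $S_u$ together with spheres such as $-S_u+\tau$ not contained in it. Your route is shorter and more geometric modulo that reference; the paper's Lie-algebra computation is longer but self-contained.
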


\begin{remark}
Here, we are thinking of $\Isom(\HH^{dim})$ as an algebraic group by exploiting the isomorphism $\Isom(\HH^{dim}) \cong O^+(dim,1)$. Thus, it makes perfect sense to talk about the Zariski closure of $\Gamma_u$ in this group.
\end{remark}

\begin{proof}
The Zariski closure of $\Gamma_u$ inside $\Isom(\HH^{dim})$ must be some Lie group---call it $\mathfrak{G}$. We shall show that its Lie algebra $\mathfrak{g}$ is the same as the Lie algebra as $\Isom(\HH^{dim})$. Since $\Isom^0(\HH^{dim})$ is simply-connected, this would imply that either $\mathfrak{G} = \Isom(\HH^{dim})$ or $\mathfrak{G} = \Isom^0(\HH^{dim})$---however, the latter can't happen since $\Gamma_u$ contains reflections.

The Lie algebra of $\Isom(\HH^{dim})$ is the Lie algebra of $G(\RR)$, which is easier to think about in this context: it is generated by all elements
    \begin{align*}
        \begin{pmatrix} 0 & \alpha \\ 0 & 1 \end{pmatrix}, \begin{pmatrix} 0 & 0 \\ \alpha & 1 \end{pmatrix}
    \end{align*}
    
\noindent with $\alpha \in R(\RR)^+$---this is a classical result for $dim = 3,5$ and was shown for $dim = 4$ by the second author \cite{Sheydvasser2020_preprint}. First, note that since there are infinitely many translations $z \mapsto z + \alpha$ with $\alpha \in R(\ZZ)^+ \cap S_u$, $\mathfrak{G}$ must actually contain all translations $z \mapsto z + \alpha$ with $\alpha \in R(\RR)^+ \cap S_u$, which are represented by matrices
    \begin{align*}
        \begin{pmatrix} 1 & \alpha \\ 0 & 1 \end{pmatrix}.
    \end{align*}
    
\noindent Thus
    \begin{align*}
        \begin{pmatrix} 0 & \alpha \\ 0 & 0 \end{pmatrix}, \begin{pmatrix} 0 & 0 \\ \alpha & 0 \end{pmatrix} \in \mathfrak{g}
    \end{align*}
    
\noindent for all $\alpha \in R(\RR)^+ \cap S_u$. Since $\mathfrak{g}$ must be closed under the Lie bracket, this also means that
    \begin{align*}
        \begin{pmatrix} \alpha & 0 \\ 0 & -\alpha \end{pmatrix} &= \left[\begin{pmatrix} 0 & \alpha \\ 0 & 0 \end{pmatrix}, \begin{pmatrix} 0 & 0 \\ 1 & 0 \end{pmatrix}\right] \\
        &= \begin{pmatrix} 0 & \alpha \\ 0 & 0 \end{pmatrix} \begin{pmatrix} 0 & 0 \\ 1 & 0 \end{pmatrix} - \begin{pmatrix} 0 & 0 \\ 1 & 0 \end{pmatrix}\begin{pmatrix} 0 & \alpha \\ 0 & 0 \end{pmatrix} \in \mathfrak{g}.
    \end{align*}

\noindent To progress further, note that $\mathfrak{G}$ acts on $\mathfrak{g}$ by conjugation, so if $\phi_\beta \in \mathfrak{G}$, then
    \begin{align*}
        \phi_\beta \circ \begin{pmatrix} 0 & \alpha \\ 0 & 0 \end{pmatrix} \circ \phi_\beta^{-1} \in \mathfrak{g}.
    \end{align*}
    
\noindent It is easy to check that
    \begin{align*}
        \phi_\beta(z) = \overline{(z - \beta)}^{-1} + \beta = \begin{pmatrix} -\beta & 1 - \nrm(\beta) \\ -1 & -\overline{\beta} \end{pmatrix}.(-\overline{z}),
    \end{align*}
    
\noindent and if we define $\psi(z) = -\overline{z}$, then one can use the generators of $G(\RR)$ to check that
    \begin{align*}
        \psi \circ \begin{pmatrix} a & b \\ c & d \end{pmatrix} \circ \psi^{-1} = \begin{pmatrix} \overline{d} & -\overline{b} \\ -\overline{c} & \overline{a} \end{pmatrix}.
    \end{align*}
    
\noindent Ergo,
    \begin{align*}
        \phi_\beta \circ \begin{pmatrix} 0 & \overline{\alpha} \\ 0 & 0 \end{pmatrix} \circ \phi_\beta^{-1} &= \begin{pmatrix} -\beta & 1 - \nrm(\beta) \\ -1 & -\overline{\beta} \end{pmatrix} \begin{pmatrix} 0 & -\alpha \\ 0 & 0 \end{pmatrix} \begin{pmatrix} -\beta & 1 - \nrm(\beta) \\ -1 & -\overline{\beta} \end{pmatrix}^{-1} \\
        &= \begin{pmatrix} \beta \alpha & -\beta \alpha \beta \\ \alpha & -\alpha\beta \end{pmatrix} \in \mathfrak{g}
    \end{align*}
    
\noindent for all $\alpha \in S_u \cap R(\RR)^+$ and $\beta \in (S_u \cap R(\ZZ)^+) \cup (S_u + \tau \cap R(\ZZ)^+)$. But then
    \begin{align*}
       \left[\begin{pmatrix} \beta \alpha & -\beta \alpha \beta \\ \alpha & -\alpha\beta \end{pmatrix}, \begin{pmatrix} 0 & 0 \\ 1 & 0 \end{pmatrix}\right] - \begin{pmatrix} \alpha & 0 \\ 0 & -\alpha \end{pmatrix} &= \begin{pmatrix} 0 & \beta \alpha + \alpha \beta \\ 0 & 0 \end{pmatrix} \in \mathfrak{g},
    \end{align*}
    
\noindent for all $\alpha \in S_u \cap R(\RR)^+$ and $\beta \in (S_u \cap R(\ZZ)^+) \cup (S_u + \tau \cap R(\ZZ)^+)$, so in particular
    \begin{align*}
        \begin{pmatrix} 0 & \tau \\ 0 & 0 \end{pmatrix} \in \mathfrak{g}.
    \end{align*}
    
\noindent Together with the other matrices we already showed are in $\mathfrak{g}$, this is sufficient to conclude that
    \begin{align*}
        \begin{pmatrix} 0 & \alpha \\ 0 & 0 \end{pmatrix} \in \mathfrak{g}
    \end{align*}
    
\noindent for all $\alpha \in R(\RR)^+$, and one can also obtain
    \begin{align*}
        \begin{pmatrix} 0 & 0 \\ \alpha & 0 \end{pmatrix} \in \mathfrak{g}
    \end{align*}
    
\noindent for all $\alpha \in R(\RR)^+$ similarly.
\end{proof}

\begin{theorem}\label{thm: limit set}
The limit set of $\Gamma_u$ is the closure of $\mathcal{A}_{R(\ZZ),u}$.
\end{theorem}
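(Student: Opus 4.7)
The plan is to prove the two inclusions $\overline{\mathcal{A}_{R(\ZZ),u}} \subseteq \Lambda(\Gamma_u)$ and $\Lambda(\Gamma_u) \subseteq \overline{\mathcal{A}_{R(\ZZ),u}}$ separately, using the geometric finiteness established in Theorem \ref{thm: geometric finiteness}.

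For the first inclusion, I would use that $\Lambda(\Gamma_u)$ is closed and $\Gamma_u$-invariant, so it suffices to place the two seed spheres $S_u$ and $-S_u + \tau$ inside $\Lambda(\Gamma_u)$. I focus on $S_u$; the case of $-S_u + \tau$ is analogous. The stabilizer of $S_u$ in $\Gamma_u$ contains the subgroup $\Gamma_u^{S_u}$ generated by the translations $T_\omega$ with $\omega \in R(\ZZ)^+ \cap S_u$ and the reflections $\phi_z$ with $z \in R(\ZZ)^+ \cap S_u$; each of these preserves $S_u$ since its translation vector lies in $S_u$ and its reflection center sits on $S_u$, respectively. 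Restricted to the totally geodesic copy of $\HH^{dim-1}$ bounded by $S_u$, the group $\Gamma_u^{S_u}$ is precisely the analogue of $\Gamma_u$ one dimension lower, built from the lattice $R(\ZZ)^+ \cap S_u$ together with the covering condition that unit balls centered at its points cover $S_u$. An argument paralleling Theorem \ref{thm: geometric finiteness} then produces a finite-volume fundamental polytope for $\Gamma_u^{S_u}$ in $\HH^{dim-1}$, making $\Gamma_u^{S_u}$ a lattice in $\Isom(\HH^{dim-1})$, so its limit set is all of $\partial \HH^{dim-1} = S_u$. Thus $S_u \subseteq \Lambda(\Gamma_u)$, and after taking the closed $\Gamma_u$-orbit of the two seeds we obtain $\overline{\mathcal{A}_{R(\ZZ),u}} \subseteq \Lambda(\Gamma_u)$.

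For the reverse inclusion, I would work with the fundamental domain $\mathcal{F} = \mathcal{R} \cap (\mathcal{P} \times (0,\infty))$ from Theorem \ref{thm: geometric finiteness} and its boundary at infinity $\mathcal{F}_\infty := \overline{\mathcal{F}} \cap \partial \HH^{dim}$. The walls of $\mathcal{F}$ at infinity lie on $S_u$, on $S_u + \tau$, and on unit spheres centered at points of $R(\ZZ)^+ \cap (S_u \cup (S_u + \tau))$, every one of which appears in $\mathcal{A}_{R(\ZZ),u}$. Consequently the relative interior of $\mathcal{F}_\infty$ sits inside the open set $\partial \HH^{dim} \setminus \overline{\mathcal{A}_{R(\ZZ),u}}$. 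Since $\mathcal{F}$ is a fundamental domain and $\Gamma_u$ acts by homeomorphisms on $\partial \HH^{dim}$, the $\Gamma_u$-orbit of the interior of $\mathcal{F}_\infty$ exhausts the domain of discontinuity $\Omega(\Gamma_u)$, and by $\Gamma_u$-invariance of $\mathcal{A}_{R(\ZZ),u}$ the whole orbit remains inside $\partial \HH^{dim} \setminus \overline{\mathcal{A}_{R(\ZZ),u}}$. Combining the two observations gives $\Omega(\Gamma_u) \subseteq \partial \HH^{dim} \setminus \overline{\mathcal{A}_{R(\ZZ),u}}$, which is exactly the desired inclusion $\Lambda(\Gamma_u) \supseteq \overline{\mathcal{A}_{R(\ZZ),u}}$.

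The main obstacle I anticipate is verifying the claim that $\Gamma_u^{S_u}$ actually acts on $\HH^{dim-1}$ as a finite-covolume reflection group. In $dim = 3$ this is classical, since $\Gamma_u^{S_u}$ is then a Hecke-type triangle group on $\HH^2$. In $dim = 4$ and $dim = 5$ one must use the covering vector property in an essential way: the covering condition ensures that the Ford-type polytope in $\HH^{dim-1}$ has no uncovered ideal vertices besides a single cusp coming from the translation subgroup, so it closes up to a finite-volume region. In practice one might discharge this by appealing to Theorem \ref{thm: covering vector classification}, which forces $R(\ZZ)^+ \cap S_u$ to be a Euclidean imaginary quadratic order in the relevant cases, and then invoking \cite{Sheydvasser2019} or the $dim = 3$ specialization of Theorem \ref{thm: geometric finiteness} itself to obtain the finite-covolume statement one dimension down.
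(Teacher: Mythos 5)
There is a genuine gap, and it sits in your second paragraph: the ``reverse inclusion'' argument never actually proves $\Lambda(\Gamma_u) \subseteq \overline{\mathcal{A}_{R(\ZZ),u}}$. What you derive there is $\Omega(\Gamma_u) \subseteq \partial\HH^{dim} \setminus \overline{\mathcal{A}_{R(\ZZ),u}}$, which upon taking complements reads $\overline{\mathcal{A}_{R(\ZZ),u}} \subseteq \Lambda(\Gamma_u)$ --- the \emph{same} containment you already established in the first paragraph (you even state the conclusion as $\Lambda(\Gamma_u) \supseteq \overline{\mathcal{A}_{R(\ZZ),u}}$). The containment $\Lambda(\Gamma_u) \subseteq \overline{\mathcal{A}_{R(\ZZ),u}}$ is therefore missing entirely from your proof. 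The paper obtains it cheaply: because $\Gamma_u$ contains the translations $z \mapsto z + \alpha$ with $\alpha \in R(\ZZ)^+ \cap S_u$, the point $\infty$ is a limit point, so $\Lambda(\Gamma_u)$ is the closure of the orbit $\Gamma_u.\infty$; and since $\infty \in S_u$, every orbit point $\gamma.\infty$ lies on the sphere $\gamma.S_u \in \mathcal{A}_{R(\ZZ),u}$, whence $\Gamma_u.\infty$ is contained in the union of the spheres of the packing and $\Lambda(\Gamma_u) \subseteq \overline{\mathcal{A}_{R(\ZZ),u}}$. Some such step must be added for the proof to close.

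There is also a secondary error in that paragraph even viewed as an argument for the inclusion it does yield: the walls of $\mathcal{F}_\infty$ lying on unit spheres centered at points of $R(\ZZ)^+ \cap (S_u \cup (S_u + \tau))$ are the mirrors of the reflection generators, and these are \emph{not} elements of $\mathcal{A}_{R(\ZZ),u}$ --- such a sphere crosses the plane $S_u$ (or $-S_u + \tau$) transversally, whereas by Theorem \ref{thm: Apollonian structure} any two intersecting spheres of the packing are tangent, and by the subsequent corollary every non-plane sphere of the packing lies in the closed slab between the two planes. Moreover, even if the walls did lie on packing spheres, that would not by itself rule out packing spheres passing through the relative interior of $\mathcal{F}_\infty$. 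By contrast, your first paragraph is essentially sound and is a legitimate variant of the paper's argument: where the paper shows that the $\Gamma_u$-orbit of $\infty$ contains every rational point of $S_u$ via the Euclidean or $\ddagger$-Euclidean algorithm, you show that the stabilizer of $S_u$ acts as a lattice on the totally geodesic $\HH^{dim - 1}$ it bounds; both routes hinge on the covering-vector hypothesis and both give $S_u \subseteq \Lambda(\Gamma_u)$, from which $\overline{\mathcal{A}_{R(\ZZ),u}} \subseteq \Lambda(\Gamma_u)$ follows by invariance and closedness.
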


\begin{remark}
This result will be of critical importance later. It is also the most crucial place where we use the fact that $u$ is a covering vector---if it is not, and we attempt to define $\Gamma_u$ as before, then the result will in general be something significantly smaller than $\mathcal{A}_{R(\ZZ),u}$; rather than a sphere packing, one instead gets a disconnected set.
\end{remark}

\begin{proof}
Choose any point $\rho \in \HH^{dim}$. Since $\Gamma_u$ includes translations $z \mapsto z + \alpha$ for all $\alpha \in R(\ZZ)^+ \cap S_u$, $\infty$ must be an accumulation point of $\Gamma_u.\rho$. Thus, the limit set will be the closure of the orbit of $\Gamma_u.\infty$. However, this orbit contains all rational points in $S_u$---this is because for any such rational point, we can use either the Euclidean algorithm or the $\ddagger$-Euclidean algorithm to move it to $\infty$ using reflections $\phi_\alpha$ and translations $z \mapsto z + \alpha$ with $\alpha \in R(\ZZ)^+ \cap S_u$. Similarly, every rational point on $S_u + \tau$ is also in the limit set, because we can apply the same technique, but simply translating all our reflections over by $\tau$. Therefore, $S_u$, $-S_u + \tau$ and their orbits under $\Gamma_u$ are in the limit set. As the orbit of $\infty$ will be contained inside $\mathcal{A}_{R(\ZZ),u}$, we have thus showed that the limit set must be $\overline{\mathcal{A}_{R(\ZZ),u}}$.
\end{proof}

A consequence of this last result is that if we want to better understand $\Gamma_u$, it is worthwhile to understand $\mathcal{A}_{R(\ZZ),u}$. In fact, what we're going to show is that $\mathcal{A}_{R(\ZZ),u}$ is a very nice Apollonian-type packing which is contained inside the super-Apollonian packing we considered earlier.

\begin{lemma}
Let $u$ be a normalized covering vector for $R(\ZZ)$ with $\nrm(u) > 3$. Then $\mathcal{A}_{R(\ZZ),u} \subset \hat{\mathcal{S}}_{R(\ZZ),u}$.
\end{lemma}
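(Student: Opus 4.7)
The plan is to induct on the word length of elements of $\Gamma_u$ in its generators; for this it suffices to show (a) that the two seed spheres $S_u$ and $-S_u + \tau$ already lie in $\hat{\mathcal{S}}_{R(\ZZ),u}$, and (b) that every generator of $\Gamma_u$ maps $\hat{\mathcal{S}}_{R(\ZZ),u}$ into itself. Part (a) is immediate: $S_u \in \mathcal{S}_{R(\ZZ),u}$ via the identity element of $\mathcal{W}$, and since $\tau \in R(\ZZ)^+$ the matrix $\begin{pmatrix} 1 & \tau \\ 0 & 1 \end{pmatrix} = -W(-\tau)W(0)$ is in $\mathcal{W}$, so $-S_u + \tau = S_{-u} + \tau$ belongs to $\mathcal{S}_{R(\ZZ),-u}$.

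For part (b), the translations $T_\omega$ with $\omega \in R(\ZZ)^+ \cap S_u$ also lie in $\mathcal{W}$ by the same observation, and so preserve each piece $\mathcal{S}_{R(\ZZ),\pm u}$ individually. The interesting generators are the reflections $\phi_\alpha$ with $\alpha \in R(\ZZ)^+$, which are orientation-reversing and hence cannot themselves lie in $\mathcal{W}$. I would handle these via the factorization $\phi_\alpha = M_\alpha \circ \tilde{\sigma}$ that is implicit in the proof of Theorem \ref{thm: zariski closure}: here $\tilde{\sigma}(z) = -\overline{z}$ is an explicit orientation reversal, while $M_\alpha$ is the M\"obius transformation attached to
\[
\mu_\alpha := \begin{pmatrix} -\alpha & 1 - \nrm(\alpha) \\ -1 & -\overline{\alpha} \end{pmatrix}.
\]
The proof would then reduce to verifying (i) $\mu_\alpha \in \mathcal{W}$ and (ii) $\tilde{\sigma}$ preserves $\hat{\mathcal{S}}_{R(\ZZ),u}$, in fact swapping its two halves.

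For (i), a direct multiplication yields the identity $\mu_\alpha = -W(-\alpha)\,W(-\overline{\alpha})\,W(0)$, which lies in $\mathcal{W}$ once one notes that $-I = W(0)^2 \in \mathcal{W}$ and that $\overline{\alpha} \in R(\ZZ)^+$ whenever $\alpha \in R(\ZZ)^+$. The latter fact is automatic for $dim = 3, 5$, and for $dim = 4$ reduces to the easily-checked observation that standard quaternion conjugation commutes with the orthogonal involution $\ddagger$ (verify on the basis $1,i,j,ij$). For (ii), $\tilde{\sigma}$ is orientation-reversing and fixes $R(\RR)_0$ pointwise, so as an oriented hyperplane $\tilde{\sigma}(S_u)$ is the same set as $S_u$ but with reversed normal, namely $S_{-u}$. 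A similar short computation gives $\tilde{\sigma} \circ W(\alpha) \circ \tilde{\sigma}^{-1} = -W(-\overline{\alpha})$, so conjugation by $\tilde{\sigma}$ normalizes $\mathcal{W}$ modulo $\pm I$, and that sign is absorbed into $\mathcal{W}$. Hence $\tilde{\sigma}$ exchanges the two halves $\mathcal{S}_{R(\ZZ),u}$ and $\mathcal{S}_{R(\ZZ),-u}$ of $\hat{\mathcal{S}}_{R(\ZZ),u}$, and combined with (i) every reflection generator $\phi_\alpha$ preserves $\hat{\mathcal{S}}_{R(\ZZ),u}$.

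The single step with any real content is the matrix identity $\mu_\alpha = -W(-\alpha)\,W(-\overline{\alpha})\,W(0)$; I expect this to be the load-bearing calculation. Everything else is bookkeeping: recognizing that $-I = W(0)^2$ lies in $\mathcal{W}$, that conjugation stabilizes $R(\ZZ)^+$, and that orientation reversal $\tilde{\sigma}$ preserves the super-packing by swapping its two orientations. Once these pieces are in place, every generator of $\Gamma_u$ preserves $\hat{\mathcal{S}}_{R(\ZZ),u}$, so the orbit of the seeds is contained in it.
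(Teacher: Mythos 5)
Your argument is essentially the paper's own proof, just with the matrix identities made explicit: the paper likewise reduces to checking that the seeds lie in $\hat{\mathcal{S}}_{R(\ZZ),u}$ and that each reflection generator factors as an element of $\mathcal{W}$ composed with $z \mapsto -\overline{z}$, and your identity $\mu_\alpha = -W(-\alpha)W(-\overline{\alpha})W(0)$ checks out. One inessential slip: since $\tilde{\sigma}$ fixes $u \in R(\RR)_0$ and carries interiors to interiors, it fixes each oriented hyperplane $S_{\pm u}$ rather than swapping $\mathcal{S}_{R(\ZZ),u}$ with $\mathcal{S}_{R(\ZZ),-u}$, but either way the union $\hat{\mathcal{S}}_{R(\ZZ),u}$ is preserved, so the conclusion stands.
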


\begin{proof}
Since $S_u$ and $-S_u + \tau$ are in $\hat{\mathcal{S}}_{R(\ZZ),u}$, we just have to show that the generators of $\Gamma_u$ preserve $\hat{\mathcal{S}}_{R(\ZZ),u}$. The translations are in $\mathcal{W}$. All other generators are of the form $z \mapsto \phi_0(z - \omega) + \omega$ for some $\omega \in R(\ZZ)^+$ and where $\phi_0(z) = \overline{z}^{-1}$. It is easy to check that $z \mapsto -\overline{z}$ preserves $\hat{\mathcal{S}}_{R(\ZZ),u}$ since it fixes $R(\ZZ)^+$ and since $z \mapsto -z^{-1}$ is in $\mathcal{W}$, it also preserves $\hat{\mathcal{S}}_{R(\ZZ),u}$. Since $\phi$ is their composition, it also preserves $\hat{\mathcal{S}}_{R(\ZZ),u}$ and thus so do all the generators.
\end{proof}

\begin{lemma}
Let $u$ be a normalized covering vector for $R(\ZZ)$ with $\nrm(u) > 3$. If $S_1,S_2 \in \mathcal{A}_{R(\ZZ),u}$ intersect, then they are tangent and there exists some $\gamma \in \Gamma_u$ which sends the point of tangency to $\infty$.
\end{lemma}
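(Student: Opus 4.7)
The plan is to leverage the preceding lemma together with the congruence analysis from Theorem~\ref{thm: congruence restriction using normalized covering vectors} and the rational-point argument already developed inside the proof of Theorem~\ref{thm: limit set}. First, since $\mathcal{A}_{R(\ZZ),u} \subset \hat{\mathcal{S}}_{R(\ZZ),u}$, each of $S_1, S_2$ can be written as $S_i = \gamma_i.(\epsilon_i S_u)$ with $\gamma_i \in \mathcal{W}$ and $\epsilon_i \in \{\pm 1\}$. Reversing orientation negates the inversive-coordinate vector, and the action of $\mathcal{W} \subset G(\RR)$ preserves the bilinear form $b$ (it is exactly the form with respect to which the M\"obius action is by isometries), so
\[
\nrm(u)\cos\theta \;=\; b(\text{inv}_u(S_1),\text{inv}_u(S_2)) \;=\; \epsilon_1\epsilon_2\, b\bigl(\text{inv}_u(S_u),\text{inv}_u(\gamma_1^{-1}\gamma_2.S_u)\bigr),
\]
where $\theta$ is the angle of intersection. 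Repeating the computation in the proof of the corollary to Theorem~\ref{thm: congruence restriction using normalized covering vectors}, the right-hand side lies in $\epsilon_1\epsilon_2\bigl(\nrm(u) + \tfrac{\nrm(u)^2}{2}\ZZ\bigr)$; dividing by $\nrm(u) \geq 5$ (since $\nrm(u)$ is square-free and greater than $3$) forces $\cos\theta = \epsilon_1\epsilon_2 \in \{\pm 1\}$, so $S_1$ and $S_2$ are tangent---internally when $\epsilon_1 = \epsilon_2$, externally otherwise.

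Next I would identify the tangency point. Forgetting orientations, $\gamma_1.S_u$ and $\gamma_2.S_u$ both lie in $\mathcal{S}_{R(\ZZ),u}$, so by Theorem~\ref{thm: tangency structure} their point of tangency $\rho$ lies in $R(\QQ) \cup \{\infty\}$; since the tangency point is independent of the chosen orientations, this $\rho$ is also the tangency point of $S_1$ and $S_2$. To move $\rho$ to $\infty$ by an element of $\Gamma_u$, observe that $\mathcal{A}_{R(\ZZ),u}$ is by definition the $\Gamma_u$-orbit of $\{S_u, -S_u + \tau\}$, so there exists $\gamma_0 \in \Gamma_u$ with $\gamma_0.S_1 \in \{S_u, -S_u + \tau\}$. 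Then $\gamma_0(\rho) \in R(\QQ) \cup \{\infty\}$ sits on $S_u$ or $S_u + \tau$, and the (possibly $\ddagger$-)Euclidean algorithm---realized, exactly as in the proof of Theorem~\ref{thm: limit set}, by a composition of translations $T_\alpha$ and reflections $\phi_\alpha$ with $\alpha$ in $R(\ZZ)^+ \cap S_u$ (or shifted by $\tau$)---produces an element $\delta \in \Gamma_u$ sending $\gamma_0(\rho)$ to $\infty$. The composition $\gamma = \delta\gamma_0$ is the required element.

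The main subtlety is the orientation bookkeeping in the first step: one has to verify cleanly that flipping orientation negates the full inversive-coordinate vector and that the $\mathcal{W}$-invariance of $b$ really does let the congruence calculation of Theorem~\ref{thm: congruence restriction using normalized covering vectors} apply uniformly to all four sign combinations of $(\epsilon_1,\epsilon_2)$. Everything after that is essentially a repackaging of facts already in the paper---rationality of the tangency point comes directly from Theorem~\ref{thm: tangency structure}, and the Euclidean-algorithm step has already been carried out inside the proof of Theorem~\ref{thm: limit set}.
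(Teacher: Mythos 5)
Your proposal is correct and follows essentially the same route as the paper: containment in the super-packing forces tangency at a rational point, transitivity of $\Gamma_u$ reduces to $S_1$ being one of the two planes, and the (possibly $\ddagger$-)Euclidean algorithm moves the rational tangency point to $\infty$. Your explicit orientation bookkeeping (tracking the signs $\epsilon_i$ so that the congruence computation applies to spheres drawn from $\hat{\mathcal{S}}_{R(\ZZ),u}$ rather than only $\mathcal{S}_{R(\ZZ),u}$) is a detail the paper glosses over, and is a worthwhile addition.
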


\begin{proof}
Since $\mathcal{A}_{R(\ZZ),u}$ is a subset of $\hat{\mathcal{S}}_{R(\ZZ),u}$, all intersections must be tangential and at rational points. Furthermore, we may assume without loss of generality that $S_1$ is either $S_u$ or $-S_u + \tau$. However, we already know that it is possible to use elements in $\Gamma_u$ that either stabilize $S_u$ or $-S_u + \tau$ to move any rational points on either of those two planes to $\infty$.
\end{proof}

\begin{lemma}
The only planes in $\mathcal{A}_{R(\ZZ),u}$ are $S_u$ and $-S_u + \tau$.
\end{lemma}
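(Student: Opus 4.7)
My plan is to reduce the problem, by an adaptation of the Euclidean-algorithm argument used in the proof of Theorem \ref{thm: tangency structure}, to understanding the stabilizer $\Gamma_u^\infty := \mathrm{Stab}_{\Gamma_u}(\infty)$, and then to show that $\Gamma_u^\infty$ preserves $\{S_u, -S_u + \tau\}$ as a set of oriented planes. Suppose $P = \gamma.S_u$ is a plane in $\mathcal{A}_{R(\ZZ),u}$, so that $\infty \in P$ and hence $p := \gamma^{-1}(\infty) \in S_u \cup \{\infty\}$. When $p$ is a finite rational point of $S_u$, the covering-vector hypothesis makes $R(\ZZ)^+ \cap S_u$ (norm- or $\ddagger$-)Euclidean, so the Euclidean algorithm on $p$ can be implemented inside the subgroup of $\Gamma_u$ generated by translations by $R(\ZZ)^+ \cap S_u$ and reflections $\phi_z$ for $z \in R(\ZZ)^+ \cap S_u$---every such generator preserves $S_u$. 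This produces $\delta \in \Gamma_u$ with $\delta(\infty) = p$ and $\delta.S_u = S_u$, so $\gamma\delta \in \Gamma_u^\infty$ and $\gamma.S_u = (\gamma\delta).S_u$. The analogous reduction for $P = \gamma.(-S_u + \tau)$ uses the Euclidean algorithm on the coset $\tau + (R(\ZZ)^+ \cap S_u)$ together with reflections centered at $R(\ZZ)^+ \cap (S_u + \tau)$.

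Any $\eta \in \Gamma_u^\infty$ is a M\"obius transformation of $\RR^{dim-1}\cup\{\infty\}$ fixing $\infty$, hence a Euclidean similarity; by discreteness of $\Gamma_u$ it is in fact an isometry $\eta(z) = A z + b$. Conjugating the translation subgroup of $\Gamma_u$ (which is exactly the lattice $R(\ZZ)^+ \cap S_u$) by $\eta$ forces $A$ to preserve this lattice and hence its affine span $S_u$. Conjugating a generating reflection $\phi_z$ by the Euclidean isometry $\eta$ produces $\phi_{\eta(z)}$, still a reflection through a unit sphere in $\Gamma_u$; applying Poincar\'e's polyhedron theorem to the fundamental domain of Theorem \ref{thm: geometric finiteness} shows that every unit-sphere reflection in $\Gamma_u$ has its center in $R(\ZZ)^+ \cap (S_u \cup (S_u + \tau))$, so $\eta$ permutes this union of two parallel lattices. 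Combined with $A(S_u) = S_u$, we conclude that $\eta$ permutes the pair $\{S_u, S_u + \tau\}$ of unoriented planes, and tracking orientations---via the action of $A$ on the normal vector $u$, together with the congruence of Theorem \ref{thm: congruence restriction using normalized covering vectors}---pins down $\eta.\{S_u, -S_u + \tau\} = \{S_u, -S_u + \tau\}$.

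The hard part is the invocation of Poincar\'e's theorem to verify that no unexpected unit-sphere reflections arise in $\Gamma_u$ beyond the generating $\phi_z$'s; granting this, the reduction combined with the stabilizer analysis gives $\gamma.S_u \in \{S_u, -S_u + \tau\}$ and, symmetrically, $\gamma.(-S_u + \tau) \in \{S_u, -S_u + \tau\}$, which is the lemma.
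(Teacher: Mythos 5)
Your first paragraph is sound and matches the paper's own reduction: since $\gamma^{-1}(\infty)$ is a rational point of $S_u$ (resp.\ of $S_u+\tau$), the Euclidean/$\ddagger$-Euclidean algorithm implemented inside $\Gamma_u$ produces $\delta$ stabilizing the plane with $\delta(\infty)=p$, so everything reduces to $\mathrm{Stab}_{\Gamma_u}(\infty)$. The problem is the second half. Your argument hinges on the claim that every reflection in a unit sphere lying in $\Gamma_u$ has its center in $R(\ZZ)^+\cap(S_u\cup(S_u+\tau))$, and you justify this only by a gesture at Poincar\'e's polyhedron theorem while yourself flagging it as ``the hard part.'' That is a genuine gap, not a routine citation: Poincar\'e's theorem yields a presentation of $\Gamma_u$ with the face-pairings as generators, but ``being a reflection in a unit sphere'' is a geometric property of a group element, not a property of words in the generators; conjugates such as $\phi_{z_1}\phi_{z_2}\phi_{z_1}$ are reflections in non-unit spheres, and to rule out an exotic unit-sphere reflection you would have to show its fixed hypersphere coincides with a face of some $\Gamma_u$-translate of $\mathcal{F}$ and that such faces which happen to be unit hemispheres only occur over lattice points. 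None of this is supplied. A smaller but real issue: ``by discreteness of $\Gamma_u$ it is in fact an isometry'' is false as stated (the cyclic group generated by $z\mapsto 2z$ is discrete); you need the conjugation of the translation lattice to kill the dilation factor, so the two sentences should be reordered and the claim that the full translation subgroup of $\Gamma_u$ equals $R(\ZZ)^+\cap S_u$ also needs an argument.

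The larger point is that this entire classification of $\mathrm{Stab}_{\Gamma_u}(\infty)$ is an unnecessary detour. The previous lemma already gives $\mathcal{A}_{R(\ZZ),u}\subset\hat{\mathcal{S}}_{R(\ZZ),u}$, whose only planes are $\pm S_u+k\tau$; so after your reduction you have an element $\gamma''\in\Gamma_u$ fixing $\infty$ with $\gamma''.S_u=\pm S_u+k\tau$. Such a $\gamma''$ is a Euclidean similarity whose effect in the $u$-direction is a nontrivial translation and/or flip, and this directly contradicts the fundamental domain $\mathcal{F}=\mathcal{R}\cap(\mathcal{P}\times(0,\infty))$ of Theorem \ref{thm: geometric finiteness}: $\mathcal{P}$ is a Dirichlet domain for a lattice of rank only $dim-2$, so $\mathcal{F}$ contains points arbitrarily far out in the $u$-direction at heights above all the unit hemispheres, and $\gamma''$ would identify two interior points of $\mathcal{F}$. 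This is exactly the paper's argument, and it bypasses any need to know which unit-sphere reflections lie in $\Gamma_u$. I would recommend replacing your stabilizer analysis with this direct contradiction.
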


\begin{proof}
Suppose that there exists some $\gamma \in \Gamma_u$ such that $\gamma.S_u$ is a plane other than $S_u$. Because $\mathcal{A}_{R(\ZZ),u}$ is a subset of $\hat{\mathcal{S}}_{R(\ZZ),u}$, it must be $\pm S_u + k\tau$ for some $k \in \ZZ$. On the other hand, $p = \gamma^{-1}(\infty)$ must be a rational point on $\Gamma_u$, hence there is some $\gamma' \in \Gamma_u$ such that $\gamma'(S_u) = S_u$ and $\gamma'(\infty) = p$. This means that $\gamma''=\gamma \circ \gamma'$ has the property that $\gamma''(\infty) = \infty$ and $\gamma''(S_u) = \pm S_u + k\tau$. This presents a problem: it's not hard to see that the existence of such an element contradicts the fundamental domain for $\Gamma_u$ that we worked out. Therefore, there is no such element $\gamma \in \Gamma_u$. By a similar argument, we can show that there is no $\gamma \in \Gamma_u$ such that $\gamma(-S_u + \tau)$ is any plane other than $-S_u + \tau$.
\end{proof}

\begin{definition}
Let $S_1, S_2 \in \hat{\mathcal{S}}_{R(\ZZ),u}$. We say that they are \emph{immediately tangent} if they are externally tangent, their interiors do not intersect, and any other $S_3 \in \hat{\mathcal{S}}_{R(\ZZ),u}$ which is also tangent at the same point must be contained in either the interior of $S_1$ or the interior of $S_2$.
\end{definition}

\begin{remark}
This definition is equivalent to the one used by Stange in her definition of $K$-Apollonian packings\cite{Stange2015}.
\end{remark}

\begin{theorem}\label{thm: Apollonian structure}
Let $u$ be a normalized covering vector for $R(\ZZ)$ with $\nrm(u) > 3$. Then $\mathcal{A}_{R(\ZZ),u}$ is the minimal subset of $\hat{\mathcal{S}}_{R(\ZZ),u}$ that contains $S_u$ and is closed under immediate tangency. Furthermore, if $S_1,S_2 \in \mathcal{A}_{R(\ZZ),u}$ intersect, then they are immediately tangent.
\end{theorem}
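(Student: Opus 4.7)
The statement splits into two claims: the \emph{minimality} (first sentence) and the \emph{immediate-tangency} (second sentence). I will prove the second claim first because it feeds directly into the minimality argument.

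For the immediate-tangency claim, let $S_1,S_2 \in \mathcal{A}_{R(\ZZ),u}$ intersect. By the preceding lemma they are tangent at a rational point $p$, and some $\gamma \in \Gamma_u$ sends $p \to \infty$; since $\Gamma_u$ acts by M\"obius transformations and so preserves both $\hat{\mathcal{S}}_{R(\ZZ),u}$ and the immediate-tangency relation, I may replace each $S_i$ by $\gamma.S_i$ and reduce to $p=\infty$. Then $\gamma.S_1, \gamma.S_2$ are planes in $\mathcal{A}_{R(\ZZ),u}$ and, by the preceding lemma characterizing planes there, form the pair $\{S_u, -S_u+\tau\}$. So it suffices to prove $S_u$ and $-S_u+\tau$ are immediately tangent. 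External tangency comes from a direct bilinear-form computation with $\xi(S_u)=u/|u|$ and $\xi(-S_u+\tau)=-u/|u|$. For the immediacy condition I would first classify the planes in $\hat{\mathcal{S}}_{R(\ZZ),u}$ parallel to $S_u$: since $\mathcal{W}$ contains the translation $z\mapsto z+\alpha$ for every $\alpha\in R(\ZZ)^+$ and $R(\ZZ)^+=\ZZ\tau\oplus(R(\ZZ)^+\cap S_u)$, these planes are precisely $\pm S_u+k\tau$ for $k\in\ZZ$; a direct half-space check then shows every such plane other than $S_u$ and $-S_u+\tau$ themselves is contained in the interior of exactly one of those two.

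For the minimality claim, the closure direction---that $\mathcal{A}_{R(\ZZ),u}$ is itself closed under immediate tangency---runs analogously: if $S_1\in\mathcal{A}_{R(\ZZ),u}$ and $S_2\in\hat{\mathcal{S}}_{R(\ZZ),u}$ are immediately tangent, I use $\Gamma_u$-invariance to reduce to $S_1\in\{S_u,-S_u+\tau\}$, and then the (twisted-)Euclidean algorithm on $R(\ZZ)^+\cap S_u$ (exactly as in the proof of the preceding lemma) to reduce the tangency point to $\infty$; the plane classification above then identifies $S_2$ as the partner plane, which lies in $\mathcal{A}_{R(\ZZ),u}$. For the other direction, let $T\subset\hat{\mathcal{S}}_{R(\ZZ),u}$ contain $S_u$ and be closed under immediate tangency. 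By the immediate-tangency claim just proved, every tangency inside $\mathcal{A}_{R(\ZZ),u}$ is already immediate in $\hat{\mathcal{S}}_{R(\ZZ),u}$; so it suffices to show that the tangency graph on $\mathcal{A}_{R(\ZZ),u}$---with edges between pairs of tangent spheres---is connected with $S_u$ in the same component as every other sphere. I would prove this by induction on the word length of $\gamma\in\Gamma_u$ in its given generators, namely the translations $T_\alpha$ with $\alpha\in R(\ZZ)^+\cap S_u$ and the reflections $\phi_z$ with $z\in(R(\ZZ)^+\cap S_u)\cup(R(\ZZ)^+\cap(S_u+\tau))$. The base $\{S_u,-S_u+\tau\}$ is connected via the tangency at $\infty$; each translation fixes both base planes as oriented spheres and so preserves tangency to them; and each reflection $\phi_z$ fixes (as a set) whichever of $S_u$ or $S_u+\tau$ contains $z$, so by conformality of M\"obius transformations, tangency to that base plane is preserved.

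The main obstacle is this connectedness induction: while each individual generator visibly preserves tangency to one base plane, one must check that the induction carries through compositions, so that applying a fresh generator to an already-connected sphere yields a sphere still linked to the component by a chain of tangencies inside $\mathcal{A}_{R(\ZZ),u}$. The key tool is again the conformality of $\Gamma_u$, which gives the local propagation, together with the covering-vector hypothesis $\nrm(u)>3$ that makes the preceding classification lemmas available. Combined with the immediate-tangency claim already proved, this yields the required minimality.
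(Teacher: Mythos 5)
Your argument is correct and follows essentially the same route as the paper's proof: reduce an intersecting pair to $\{S_u,\,-S_u+\tau\}$ by moving the tangency point to $\infty$ and invoking the classification of planes in $\mathcal{A}_{R(\ZZ),u}$, get closure under immediate tangency from transitivity on rational points via the (twisted) Euclidean algorithm, and get minimality by inducting on word length, using that each generator of $\Gamma_u$ fixes one of the two base planes and carries the other to a sphere tangent to it. The only caveat is that your classification of the planes of $\hat{\mathcal{S}}_{R(\ZZ),u}$ parallel to $S_u$ as \emph{precisely} $\pm S_u + k\tau$ is argued only in the easy direction (that these occur), but the needed reverse containment is exactly what the paper's earlier lemma on planes already asserts, so nothing essential is missing.
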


\begin{proof}
First, note that $S_u$ and $-S_u + \tau$ are immediately tangent and moreover immediate tangency is preserved under the action of $\Gamma_u$. Second, if any two elements in $\mathcal{A}_{R(\ZZ),u}$ intersect, it is possible to move their point of tangency to $\infty$. However, since $S_u$ and $-S_u + \tau$ are the only possible planes, this proves that $S_1,S_2$ are immediately tangent.

Since we know that $\Gamma_u$ acts transitively on the rational points of $S_u$, it follows that $\mathcal{A}_{R(\ZZ),u}$ includes all spheres in $\hat{\mathcal{S}}_{R(\ZZ),u}$ that are immediately tangent to $S_u$; similarly, it includes all spheres that are immediately tangent to $-S_u + \tau$. Furthermore, it is easy to see that the action of the generators of $\Gamma_u$ either preserves $S_u$ (or $-S_u + \tau$) or sends it to a sphere immediately tangent to $-S_u + \tau$ (or $S_u$). Inductively, we can see that $\mathcal{A}_{R(\ZZ),u}$ will exactly consist of the spheres in $\hat{\mathcal{S}}_{R(\ZZ),u}$ that can be produced via immediate tangency.
\end{proof}

\begin{corollary}
All of the spheres in $\mathcal{A}_{R(\ZZ),u}$ other than $S_u$ and $-S_u + \tau$ have positive bend and are contained between $S_u$ and $-S_u + \tau$. The interiors of spheres do not intersect.
\end{corollary}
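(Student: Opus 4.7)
The plan is to address the three assertions in turn, leaning on Theorem \ref{thm: Apollonian structure} and inducting over the immediate tangency construction. The preceding lemma already establishes that the only planes in $\mathcal{A}_{R(\ZZ),u}$ are $S_u$ and $-S_u + \tau$, so every other $S \in \mathcal{A}_{R(\ZZ),u}$ is a sphere of finite radius with nonzero bend. Let $\overline{\mathcal{R}}$ denote the closed slab bounded by the two planes.

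For containment in $\overline{\mathcal{R}}$, I would proceed by induction on the generations in the construction of Theorem \ref{thm: Apollonian structure}. The base spheres $S_u$ and $-S_u+\tau$ lie on $\partial \overline{\mathcal{R}}$. For the inductive step, suppose $S$ is added via immediate tangency to a previously generated $S'$ at a rational point $p$. Apply an element $\gamma \in \Gamma_u$ that sends $p$ to $\infty$; since immediate tangency is preserved by the $\Gamma_u$-action, $\gamma.S$ and $\gamma.S'$ become parallel planes in $\hat{\mathcal{S}}_{R(\ZZ),u}$, and by the previous lemma they must be a pair from $\{S_u, -S_u + \tau\}$ (up to a translation in $\Gamma_u$). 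Pulling back by $\gamma^{-1}$ then yields that $S$ lies in $\overline{\mathcal{R}}$.

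Positive bend then follows: each non-plane $S \in \mathcal{A}_{R(\ZZ),u}$ is bounded and contained in $\overline{\mathcal{R}}$, and its orientation inherited from the $\Gamma_u$-action starting at the two bounding planes places the bounded region as the interior, so the bend is positive. The disjointness of interiors is nearly immediate: by Theorem \ref{thm: Apollonian structure}, any two intersecting spheres of $\mathcal{A}_{R(\ZZ),u}$ are immediately tangent, so their interiors are disjoint by the very definition of immediate tangency; for non-intersecting pairs, the inductive placement of each new sphere avoids all previously placed interiors, ruling out nested configurations.

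The main obstacle will be the slab-containment induction, which requires careful tracking of the orientations under the reflections and inversions in $\Gamma_u$ to ensure that at each step the new sphere really does land in $\overline{\mathcal{R}}$ with the bounded region as its interior, rather than on the wrong side of one of the bounding planes or with reversed orientation (which is essentially the nontrivial geometric content of the corollary).
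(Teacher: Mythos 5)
There is a genuine gap, and it sits exactly where you flag it in your closing paragraph: the slab-containment induction is not actually carried out. In your inductive step you send the tangency point $p$ to $\infty$, observe that $\gamma.S$ and $\gamma.S'$ are the two planes $S_u$ and $-S_u+\tau$, and then assert that ``pulling back by $\gamma^{-1}$ yields that $S$ lies in $\overline{\mathcal{R}}$.'' This is a non sequitur: being in the $\Gamma_u$-orbit of $S_u$ or $-S_u+\tau$ is just the definition of membership in $\mathcal{A}_{R(\ZZ),u}$ and carries no information about where $S$ sits relative to the slab -- that is precisely what is to be proved. The same issue infects your positive-bend step (boundedness plus an unverified claim that ``the orientation inherited from the $\Gamma_u$-action places the bounded region as the interior'') and your treatment of non-intersecting pairs (``the inductive placement avoids all previously placed interiors'' does not by itself rule out a new sphere being nested inside some earlier, non-adjacent sphere, since immediate tangency only constrains its relation to the one sphere it touches).

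The missing idea, which is what the paper's proof supplies, is to propagate constraints along the tangency chain itself. Since consecutive spheres in a chain are immediately tangent -- hence externally tangent with disjoint interiors -- a sphere of negative bend cannot appear in a chain starting from $S_u$, which gives positive bend directly; and a sphere lying in the interior of $S_u$ or of $-S_u+\tau$ would force some link of its chain to be internally tangent to one of those planes, which is impossible, giving containment in the slab. Disjointness of interiors for an arbitrary pair $S_1, S_2$ then follows by using $\Gamma_u$ to move $S_1$ onto one of the two bounding planes and invoking the containment statement for $S_2$. If you fill your induction with these chain arguments your outline becomes essentially the paper's proof; as written, the nontrivial geometric content is deferred rather than established.
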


\begin{proof}
A sphere with negative bend cannot be immediately tangent to a sphere with positive bend. Since any sphere in $\mathcal{A}_{R(\ZZ),u}$ must be in a chain connected via immediate tangency, this means that all spheres that are not planes must have positive bend. If any such sphere were to be in the interior of $S_u$ or $-S_u + \tau$, then some sphere in this same chain would need to be internally tangent to one of those planes, which we know is impossible; therefore, all spheres must lie in their exteriors. But this means that there are no two spheres $S_1,S_2 \in \mathcal{A}_{R(\ZZ),u}$ with intersecting interiors, since we can use the action of $\Gamma_u$ to move $S_1$ to either $S_u$ or $-S_u + \tau$.
\end{proof}

\begin{corollary}\label{corollary: containment of superpacking}
Let $u$ be a normalized covering vector for $R(\ZZ)$ with $\nrm(u) > 3$. Then any sphere in $\hat{\mathcal{S}}_{R(\ZZ),u}$ is contained in sphere in $\mathcal{A}_{R(\ZZ),u}$.
\end{corollary}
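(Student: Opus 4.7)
The plan is to combine the immediate tangency characterization of $\mathcal{A}_{R(\ZZ),u}$ from Theorem \ref{thm: Apollonian structure} with the $\Gamma_u$-equivariance of the setup. Since both $\hat{\mathcal{S}}_{R(\ZZ),u}$ and $\mathcal{A}_{R(\ZZ),u}$ are invariant under $\Gamma_u$, and containment of oriented spheres is $\Gamma_u$-equivariant, it suffices to verify the claim after moving $S$ by a suitable element of $\Gamma_u$.

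First I would use Theorem \ref{thm: tangency structure} to produce, for a given $S \in \hat{\mathcal{S}}_{R(\ZZ),u}$, a rational point $p \in R(\QQ) \cup \{\infty\}$ at which $S$ is tangent to some other super-packing sphere; for example, if $S = \gamma_0.S_u$ for some $\gamma_0 \in \mathcal{W}$, then one may take $p = \gamma_0(\infty)$, which is tangent to $\gamma_0.(S_u + \tau)$. Next, using the transitive action of $\Gamma_u$ on rational cusps—established via the Euclidean or $\ddagger$-Euclidean algorithm in the proof of Theorem \ref{thm: limit set}—I would produce $\gamma \in \Gamma_u$ with $\gamma(p) = \infty$. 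The image $\gamma.S \in \hat{\mathcal{S}}_{R(\ZZ),u}$ then passes through $\infty$ and is tangent to something there, so it must be a plane parallel to $S_u$.

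At $\infty$, the pair $(S_u, -S_u+\tau)$ is immediately tangent in $\mathcal{A}_{R(\ZZ),u}$, as noted in the proof of Theorem \ref{thm: Apollonian structure}. By the very definition of immediate tangency, any other element of $\hat{\mathcal{S}}_{R(\ZZ),u}$ tangent at $\infty$—in particular $\gamma.S$—either coincides with one of these two planes or is contained in the interior of one of them. Applying $\gamma^{-1}$ then shows that $S$ is contained in $\gamma^{-1}.S_u$ or in $\gamma^{-1}.(-S_u+\tau)$, and both of these lie in $\mathcal{A}_{R(\ZZ),u}$ by $\Gamma_u$-invariance. This completes the reduction.

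The main obstacle is justifying that an arbitrary tangent point $p$ arising from the $\mathcal{W}$-orbit really does lie in the $\Gamma_u$-orbit of $\infty$. The proof of Theorem \ref{thm: limit set} directly addresses only rational points on $S_u$ and $S_u + \tau$; for a tangent point arising as an image of $\infty$ under a Cohn-matrix word of arbitrary length, one must iterate the Euclidean-type reduction, alternately translating and reflecting by elements of $\Gamma_u$ to ``descend'' to a point on one of the two base planes, where the base case of Theorem \ref{thm: limit set} applies. The covering vector hypothesis on $u$ is exactly what ensures that this reduction procedure terminates, so the hypothesis $\nrm(u) > 3$ is used here only indirectly, through its role in guaranteeing the clean tangency structure of Theorem \ref{thm: tangency structure}.
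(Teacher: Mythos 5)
There is a genuine gap at the step you yourself flag as the main obstacle, and it cannot be repaired within your framework: an arbitrary tangency point $p = \gamma_0(\infty)$ with $\gamma_0 \in \mathcal{W}$ need \emph{not} lie in the $\Gamma_u$-orbit of $\infty$. By Theorem \ref{thm: limit set}, $\Gamma_u.\infty$ is contained in the limit set $\overline{\mathcal{A}_{R(\ZZ),u}}$, which is disjoint from the open interiors of the spheres of $\mathcal{A}_{R(\ZZ),u}$ (their interiors are pairwise disjoint, so no sphere of the packing, and hence no limit point, meets the open interior of another). But the super-packing contains spheres nested strictly inside those interiors---indeed every sphere of $\hat{\mathcal{S}}_{R(\ZZ),u}$ contains infinitely many others, and at most one ``layer'' of these can touch the boundary of the ambient $\mathcal{A}$-sphere---so for such an $S$ \emph{every} point of $S$, in particular every tangency point, lies in an open interior and therefore outside $\Gamma_u.\infty$. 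Concretely, for $R(\ZZ) = \ZZ[\sqrt{-5}]$, $u = \tau = \sqrt{-5}$, one computes $W(-2\sqrt{-5})W(\sqrt{-5}).\infty = \tfrac{9}{5}\sqrt{-5}$, which lies outside the slab $0 \leq \mathrm{Im}(z) \leq \sqrt{5}$ containing $\overline{\mathcal{A}_{R(\ZZ),u}}$; no $\gamma \in \Gamma_u$ can send this point to $\infty$. Your proposed fix---iterating the Euclidean reduction---only moves rational points \emph{on} $S_u$ or $S_u + \tau$ to $\infty$; descending through deeper levels of the super-packing would require reflections in spheres not centered on those two planes, and these are not elements of $\Gamma_u$. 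The covering hypothesis does not rescue this.

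The paper's argument avoids the issue entirely: it inducts on the length $l(S)$ of a shortest tangency chain in $\hat{\mathcal{S}}_{R(\ZZ),u}$ from $S_u$ to $S$ (such chains exist because the generators of $\mathcal{W}$ move $S_u$ to spheres tangent to it). If the predecessor $S'$ is contained in some $T \in \mathcal{A}_{R(\ZZ),u}$, then $S$ is either still contained in $T$ or is tangent to $T$ at the point where it exits, and in the latter case the closure of $\mathcal{A}_{R(\ZZ),u}$ under immediate tangency (Theorem \ref{thm: Apollonian structure}) supplies a new containing sphere. That induction never needs to normalize a tangency point to $\infty$ by an element of $\Gamma_u$, which is precisely the step your route cannot supply. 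Your use of Theorem \ref{thm: tangency structure} and of the immediate tangency of $S_u$ and $-S_u + \tau$ at $\infty$ is fine as far as it goes; the missing ingredient is that your reduction only ever reaches the ``first layer'' of the super-packing.
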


\begin{proof}
We know that $\hat{\mathcal{S}}_{R(\ZZ),u}$ is connected via tangency because the action of the generators is to move $S_u$ to a sphere tangent to it. So, given a sphere $S$ in the super-packing, let $l(S)$ denote the length of the shortest sequence of spheres such that consecutive spheres are tangent, the first term is $S_u$, and the last term is $S$. So, we can induct on $S_u$ to prove that any sphere in $\hat{\mathcal{S}}_{R(\ZZ),u}$ is contained in sphere in $\mathcal{A}_{R(\ZZ),u}$. This is obviously true for $S_u$. For any other sphere $S$, look at the next sphere $S'$ in its chain. Since $l(S') = l(S) - 1$, $S'$ is contained inside some sphere in $\mathcal{A}_{R(\ZZ),u}$. Then either $S$ is contained inside this same sphere, or it is tangent to it. However, $\mathcal{A}_{R(\ZZ),u}$ is closed under immediate tangency, so $S$ must be contained in some sphere in $\mathcal{A}_{R(\ZZ),u}$ that is immediately tangent at that same point.
\end{proof}

This, finally, motivates our definition of the density of our Apollonian-type packings.

\begin{definition}
Let $u$ be a normalized covering vector for $R(\ZZ)$ such that $\nrm(u) > 3$. Then the \emph{density} of $\mathcal{A}_{R(\ZZ),u}$ is
    \begin{align*}
        \delta\left(\mathcal{A}_{R(\ZZ),u}\right) = \frac{1}{\text{vol}\left(R(\RR)^+/R(\ZZ)^+\right)}\sum_{\substack{S \in \mathcal{A}_{R(\ZZ),u}/\left(R(\ZZ)^+ \cap S_u\right) \\ S \neq S_u,-S_u + \tau}} \text{vol}(\text{Int}(S)),
    \end{align*}
    
\noindent where $\text{Int}(S)$ is the interior of $S$.
\end{definition}

\begin{remark}
Here, we are thinking of $R(\ZZ)^+ \cap S_u$ as an abelian group, which we can identify with all translations of $R(\RR)^+$ by elements in $R(\ZZ)^+ \cap S_u$. Thus, $\mathcal{A}_{R(\ZZ),u}/\left(R(\ZZ)^+ \cap S_u\right)$ should be understood as the set of orbits of $\mathcal{A}_{R(\ZZ),u}$ under this action.
\end{remark}

\begin{remark}
There is a natural sense in which $\mathcal{A}_{R(\ZZ),u}/\left(R(\ZZ)^+ \cap S_u\right)$ can be viewed as being a subset of $R(\RR)^+/R(\ZZ)^+$, which we are using here. Specifically, we know that all non-planes in the Apollonian-type packing are contained between $S_u$ and $-S_u + \tau$ and furthermore that this packing is preserved under the action of $S_u \cap R(\ZZ)^+$ by translations, since this group is contained in $\Gamma_u$.
\end{remark}

\begin{remark}
Because the interiors of spheres in $\mathcal{A}_{R(\ZZ),u}/\left(R(\ZZ)^+ \cap S_u\right)$ do not intersect, this really is a ``density" in the sense that this measures the proportion of the space that $\mathcal{A}_{R(\ZZ),u}$ occupies in $R(\RR)^+/R(\ZZ)^+$. In particular, it is necessarily between $0$ and $1$.
\end{remark}

The importance of the density is that if it is less than $1$, it immediately follows that $E(\ZZ)$ must be an infinite-index subgroup of $G(\ZZ)$.

\begin{lemma}
Let $u$ be a normalized covering vector for $R(\ZZ)$ such that $\nrm(u) > 3$. The density of $\mathcal{A}_{R(\ZZ),u}$ is less than $1$ if and only if there exists a ball $B$ such that $B$ does not intersect the interior of any sphere in $\mathcal{A}_{R(\ZZ),u}$.
\end{lemma}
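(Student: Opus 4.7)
The plan is to reframe the density as a measure ratio on a fundamental domain and then locate a gap using the fact that the limit set has Lebesgue measure zero. Let $F$ be a fundamental domain of volume $\text{vol}(R(\RR)^+/R(\ZZ)^+)$ sitting in the strip between $S_u$ and $-S_u + \tau$ (say, for the action of $R(\ZZ)^+ \cap S_u$ by translation), and let $U$ be the union of open interiors $\text{Int}(S)$ of the non-plane spheres $S \in \mathcal{A}_{R(\ZZ),u}$ living inside that strip, intersected with $F$. The preceding corollary says that distinct interiors are disjoint, so unfolding the sum gives
\begin{align*}
\delta(\mathcal{A}_{R(\ZZ),u}) \;=\; \frac{\text{vol}(U)}{\text{vol}(F)}.
\end{align*}
Thus the biconditional I must prove is: $\text{vol}(U) < \text{vol}(F)$ if and only if there is a ball in $\RR^{dim - 1}$ disjoint from every sphere interior of $\mathcal{A}_{R(\ZZ),u}$.

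The $(\Leftarrow)$ direction is immediate: any such ball $B$ in particular avoids the half-space interiors of the two planes $S_u$ and $-S_u+\tau$, so it lies strictly between them; translating by elements of $R(\ZZ)^+ \cap S_u$ places $B$ inside $F$ (splitting along the boundary of $F$ if needed, without loss of total measure), giving $\text{vol}(F \setminus U) \geq \text{vol}(B) > 0$ and hence $\delta < 1$.

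For the forward direction, I will bring in the limit set $L = \overline{\mathcal{A}_{R(\ZZ),u}}$ of $\Gamma_u$ (Theorem \ref{thm: limit set}) and check two topological facts. First, $U \cap L = \emptyset$: a point in $\text{Int}(S_0)$ has a neighborhood inside $\text{Int}(S_0)$, and by disjointness of interiors together with the fact that distinct spheres in $\mathcal{A}_{R(\ZZ),u}$ meet at most in isolated tangent points (the preceding results in Section \ref{section: super-apollonian}), that neighborhood misses $\bigcup_S S$ entirely, and hence misses $L$. Second, $\partial U \subseteq L$, because any limit point of $U$ that does not lie in $U$ is readily seen to be a limit point of $\bigcup_S \partial S$. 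These two facts combine to say that $U \cup L$ is closed, so $F \setminus (U \cup L)$ is open.

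The crucial and most substantial ingredient is the claim that $L$ has Lebesgue measure zero in $\RR^{dim - 1}$. By Theorems \ref{thm: geometric finiteness} and \ref{thm: zariski closure}, $\Gamma_u$ is geometrically finite and thin, hence in particular not a lattice in $\Isom(\HH^{dim})$; Sullivan's classical theorem on geometrically finite Kleinian groups then implies $L$ has Lebesgue measure zero on $\partial \HH^{dim} \cong \RR^{dim-1} \cup \{\infty\}$. Given this, if $\delta(\mathcal{A}_{R(\ZZ),u}) < 1$ then
\begin{align*}
\text{vol}\bigl(F \setminus (U \cup L)\bigr) \;=\; \text{vol}(F) - \text{vol}(U) - \text{vol}(L) \;>\; 0,
\end{align*}
so the non-empty open set $F \setminus (U \cup L)$ contains the desired ball $B$. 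The appeal to Sullivan is the one genuine obstacle; the rest of the argument is routine topology once it is granted.
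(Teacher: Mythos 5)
Your proposal is correct and follows essentially the same route as the paper: the paper argues the contrapositive (if no such ball exists, the interiors are dense, and since every point is either in an interior or in the measure-zero limit set by Patterson--Sullivan, the density is $1$), while you run the implication directly, but the decomposition into sphere interiors plus the limit set and the key appeal to geometric finiteness forcing the limit set to have Lebesgue measure zero are identical. The only quibble is that your claim $\partial U \subseteq L$ ignores boundary points of the fundamental domain $F$, but this is harmless since one can work with the full union of interiors before intersecting with $F$.
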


\begin{proof}
Obviously, if there exists such a ball, then the density is less than $1$. On the other hand, suppose that there is no such ball, so the union of the interiors of spheres in $\mathcal{A}_{R(\ZZ)}$ is actually dense in $R(\RR)^+$. We proved that $\mathcal{A}_{R(\ZZ),u}$ is the limit set of $\Gamma_u$, which we proved to be finite and geometrically finite. By Patterson-Sullivan \cite{Patterson1976}\cite{Sullivan1979}, this set must have Hausdorff dimension strictly less than $dim - 1$ and so, in particular, must have Lebesgue measure zero. If the union of the interiors of spheres in $\mathcal{A}_{R(\ZZ)}$ is dense, then every point in $R(\RR)^+$ is either in the interior of some such sphere or it is in the limit set. Therefore, taking any polytope $P$ in $R(\RR)^+$, the volume of intersection of $P$ with the union of the interiors of spheres in $\mathcal{A}_{R(\ZZ)}$ is equal to the volume of $P$, which is to say that $\mathcal{A}_{R(\ZZ)}$ has density $1$.
\end{proof}

\begin{theorem}
Let $u$ be a normalized covering vector for $R(\ZZ)$ such that $\nrm(u) > 3$. Then if the density of $\mathcal{A}_{R(\ZZ),u}$ is less than $1$, $E(\ZZ)$ is an infinite index subgroup of $G(\ZZ)$.
\end{theorem}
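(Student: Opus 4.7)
The plan is to prove the contrapositive: assume $E(\ZZ)$ has finite index in $G(\ZZ)$, and deduce $\delta(\mathcal{A}_{R(\ZZ),u}) = 1$. By Theorem \ref{thm: Cohn decomposition}, $\mathcal{W}$ is finite-index in $E(\ZZ)$ and hence in $G(\ZZ)$; since $G(\ZZ)$ is an arithmetic lattice in $G(\RR)$ (as noted at the end of Section \ref{section: basic definitions}), $\mathcal{W}$ itself is a lattice, acting on $\HH^{dim}$ with finite hyperbolic covolume. The identity $\begin{pmatrix} 1 & \alpha \\ 0 & 1 \end{pmatrix} = -W(-\alpha)W(0)$ used in the proof of Theorem \ref{thm: Cohn decomposition} places the full translation subgroup $T = \{T_\alpha : \alpha \in R(\ZZ)^+\}$ inside $\mathcal{W}$, so $\infty$ is a cusp of $\mathcal{W}$ with maximal-rank parabolic stabilizer.

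By the preceding lemma, the task reduces to showing that every non-empty open ball $B \subset R(\RR)^+$ meets the interior of some sphere in $\mathcal{A}_{R(\ZZ),u}$. By Corollary \ref{corollary: containment of superpacking}, it suffices to exhibit a sphere $S \in \hat{\mathcal{S}}_{R(\ZZ),u}$ whose closed Euclidean disk lies inside $B$, for then the $S' \in \mathcal{A}_{R(\ZZ),u}$ containing $S$ satisfies $B \cap \text{Int}(S') \supseteq \mathrm{disk}(S) \neq \emptyset$.

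I will produce such an $S$ using the following consequence of $\mathcal{W}$ being a cusped lattice: the orbit $\mathcal{W} \cdot \infty$ is dense in $\partial \HH^{dim}$, and for every open ball $B \subset R(\RR)^+$ and every $M > 0$ there exists $\gamma = \begin{pmatrix} a & b \\ c & d \end{pmatrix} \in \mathcal{W}$ with $\gamma \cdot \infty = ac^{-1} \in B$ and $\nrm(c) > M$. This follows by a pigeonhole argument: only finitely many $T$-cosets of $\mathcal{W}$ admit a representative with $\nrm(c)$ bounded by a fixed constant, whereas $\mathcal{W} \cdot \infty$ has infinitely many points in $B$ by density. The normalized inversive coordinate formula of Section \ref{section: super-apollonian} shows that when $\gamma \cdot S_u$ is not a plane, its bend has magnitude tending to infinity with $\nrm(c)$, so its radius tends to $0$. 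The planar degeneracy (which happens when $c$ and $d$ are aligned relative to $u$) can be avoided by replacing $\gamma$ with $\gamma T_\beta$ for a suitable $\beta \in R(\ZZ)^+ \setminus S_u$: since $T_\beta$ fixes $\infty$, the point $\gamma \cdot \infty$ is unchanged, but $\gamma T_\beta \cdot S_u = \gamma \cdot (S_u + \beta)$ has a bend perturbed by a term of order $\nrm(c)^2$ that destroys the cancellation. For $M$ large enough, the resulting super-packing sphere through $\gamma \cdot \infty \in B$ has radius strictly less than the distance from $\gamma \cdot \infty$ to $\partial B$, placing its closed disk inside $B$ as required.

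The main obstacle is the quantitative orbit-counting combined with radius control in the third paragraph: extracting a super-packing sphere through a prescribed point with prescribed small radius is a standard but non-trivial consequence of the cusped-lattice structure of $\mathcal{W}$, and excluding the planar degeneracy requires the translation trick above. Once that is secured, Corollary \ref{corollary: containment of superpacking} closes the argument: the small $\hat{\mathcal{S}}_{R(\ZZ),u}$-disk inside $B$ is contained in the interior of some $S' \in \mathcal{A}_{R(\ZZ),u}$, so $B \cap \text{Int}(S') \neq \emptyset$, contradicting $B$ being a gap.
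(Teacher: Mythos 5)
Your proposal is correct and follows essentially the same route as the paper: $\mathcal{W}$ is a lattice, so the orbit $\mathcal{W}\cdot\infty$ meets every open ball $B$, and Corollary \ref{corollary: containment of superpacking} together with the preceding lemma forces the density to be $1$. The quantitative third paragraph (pigeonholing on $\nrm(c)$ and perturbing by $T_\beta$ to shrink the sphere into $B$) is unnecessary, since the super-packing sphere $\gamma\cdot S_u$ already passes through the point $\gamma\cdot\infty\in B$, and $B$ being open then meets the interior of the $\mathcal{A}_{R(\ZZ),u}$-sphere containing it.
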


\begin{proof}
Suppose that instead $E(\ZZ)$ is finite index; then, so is $\mathcal{W}$, which means that it is a lattice. Since it is a lattice, its limit set is all of $R(\RR)^+$. In particular, inside every open ball $B \subset R(\RR)^+$, there has to exist some $\gamma \in \mathcal{W}$ such that $\gamma.\infty \in B$. However, this is to say that there exists a sphere in $\hat{\mathcal{S}}_{R(\ZZ),u}$ that intersects $B$. However, by Corollary \ref{corollary: containment of superpacking}, any such sphere must be contained inside a sphere in $\mathcal{A}_{R(\ZZ),u}$. Therefore, any open ball $B \subset R(\RR)^+$ intersects the interior of a sphere in $\mathcal{A}_{R(\ZZ),u}$ which means, by the preceding lemma, that its density is $1$.
\end{proof}

\section{Behaviour of the Density of Apollonian-Type Packings:}\label{section: density estimates}
In order to obtain control on the density of our packing, we make use of two approaches: first using asymptotic estimates on integers represented by bends of spheres in a packing to get a general growth rate, and second using \textit{forbidden balls} to get explicit upper bounds. We first look in the direction of asymptotic estimates.

Recall the definition of the density of $\mathcal{A}_{R(\ZZ),u}$ given above,
    \begin{align*}
        \delta\left(\mathcal{A}_{R(\ZZ),u}\right) = \frac{1}{\text{vol}\left(R(\RR)^+/R(\ZZ)^+\right)}\sum_{\substack{S \in \mathcal{A}_{R(\ZZ),u}/\left(R(\ZZ)^+ \cap S_u\right) \\ S \neq S_u,-S_u + \tau}} \text{vol}(\text{Int}(S)),
    \end{align*}

\noindent First, let's work out the initial scaling factor.

\begin{theorem}\label{scalingfactor}
\begin{align*}
    \text{vol}\left(\RR^{n - 1}/R(\ZZ)^+\right) = \begin{cases} \frac{\sqrt{|\disc(R(\ZZ))|}}{2} & \text{if $\text{dim} = 3$} \\ \frac{\sqrt{|\disc(R(\ZZ))|}}{2\sqrt{|\iota(\disc(\ddagger))|}} & \text{if $\text{dim} = 4$} \\ \frac{\sqrt{|\disc(R(\ZZ))|}}{4} & \text{if $\text{dim} = 5$.} \end{cases}
\end{align*}
\end{theorem}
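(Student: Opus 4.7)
The strategy in all three cases is the same: relate the covolume of $R(\ZZ)^+$ to a Gram determinant computed from the trace pairing. The common mechanism is that, under the standard embedding of $R(\RR)$ into $\RR^n$ (with $n$ the real dimension of $R(\RR)$), the Euclidean inner product satisfies
    \begin{align*}
        \langle x, y\rangle_{\RR^n} = \tfrac{1}{2}\tr(x\bar y),
    \end{align*}
since the reduced norm $\nrm(x) = x\bar x$ agrees with the squared Euclidean norm. Consequently, for any rank-$k$ sublattice $\Lambda \subset \RR^n$ with $\ZZ$-basis $f_1, \ldots, f_k$ spanning a $k$-dimensional subspace,
    \begin{align*}
        \mathrm{vol}(\RR^k/\Lambda)^2 = \det(\langle f_i, f_j\rangle) = 2^{-k}\det(\tr(f_i\bar{f_j})).
    \end{align*}

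The cases $dim = 3$ and $dim = 5$ are immediate. Here $R(\ZZ)^+ = R(\ZZ)$ is full-rank in $R(\RR) \cong \RR^{dim - 1}$, and the Gram matrix of any $\ZZ$-basis under the trace pairing is precisely the matrix whose determinant defines $\disc(R(\ZZ))$. Taking $k = 2$ yields $\sqrt{|\disc|}/2$; taking $k = 4$ yields $\sqrt{|\disc|}/4$.

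The $dim = 4$ case is more involved because $R(\ZZ)^+$ has rank $3$ inside the rank-$4$ order $\OO := R(\ZZ)$. I would first verify the trace-orthogonality of the decomposition $H_\RR = H_\RR^+ \oplus H_\RR^-$: for $x \in H^+$, $y \in H^-$, writing everything in the standard basis diagonalizing $\ddagger$ gives $\tr(x\bar y) = 0$ directly. Next, I would choose a $\ZZ$-basis $g_1, g_2, g_3$ of $\OO^+$ and extend to a $\ZZ$-basis of $\OO$ by some $g_4 \in \OO$ (possible since $\OO/\OO^+$ embeds into $H^-$ via $\beta \mapsto (\beta - \beta^\ddagger)/2$, hence is free of rank $1$). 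Decomposing $g_4 = v + \alpha$ with $v \in H^+$, $\alpha \in H^-$ and exploiting the block structure coming from trace-orthogonality,
    \begin{align*}
        \disc(\OO) = \tr(\alpha\bar\alpha)\cdot\det(G^+) = 2\nrm(\alpha)\det(G^+),
    \end{align*}
where $G^+$ is the $3\times 3$ Gram matrix of $\OO^+$, so that $\mathrm{vol}(\RR^3/\OO^+)^2 = \det(G^+)/8 = |\disc(\OO)|/(16\nrm(\alpha))$.

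The remaining step is the identification $4\nrm(\alpha) = |\iota(\disc(\ddagger))|$; this is where the \emph{maximality} of $\OO$ as a $\ddagger$-order must enter, and it is the main obstacle. By construction, $\alpha$ generates $p^-(\OO) = \{(\beta - \beta^\ddagger)/2 : \beta \in \OO\}$ as a $\ZZ$-module. One wants to show that for a maximal $\ddagger$-order $\OO \cap H^- = 2\,p^-(\OO)$, so that the generator $\mu := 2\alpha$ of $\OO \cap H^-$ satisfies $\nrm(\mu) = 4\nrm(\alpha)$; and moreover that $\nrm(\mu)$ is square-free, since otherwise adjoining $\mu/k$ for a non-trivial square divisor $k^2 \mid \nrm(\mu)$ would produce a strictly larger $\ddagger$-order. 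Given these two facts, $\disc(\ddagger) = \mu^2\QQ^{\times 2} = -\nrm(\mu)\QQ^{\times 2}$ has integer part exactly $\{-\nrm(\mu)n^2 : n \in \ZZ\}$, whence $|\iota(\disc(\ddagger))| = \nrm(\mu) = 4\nrm(\alpha)$, completing the case. I expect both structural claims to be provable by a localization argument at each rational prime, analyzing the possible local shapes of maximal $\ddagger$-orders, or alternatively by bootstrapping from the characterization $\discrd(\OO) = \disc(H) \cap \iota(\disc(\ddagger))$ cited earlier from \cite{Sheydvasser2017} together with $|\disc(\OO)| = |\discrd(\OO)|^2$.
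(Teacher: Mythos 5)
Your treatment of the cases $\text{dim}=3,5$ is exactly the paper's: identify the Euclidean inner product with $\tfrac{1}{2}\tr(x\overline{y})$ and read off the covolume from the Gram determinant defining $\disc(R(\ZZ))$. For $\text{dim}=4$ you take a genuinely different route. The paper computes $\disc(R(\ZZ)^+)$ directly by localizing at every prime and enumerating the possible isomorphism classes of $R(\ZZ_p)$ as a ring with involution (separately for $p\mid\disc(R(\QQ))$ odd, $p=2$, and $p\nmid\disc(R(\QQ))$), verifying in each case that $\disc(R(\ZZ_p)^+)=2\disc(R(\ZZ_p))/\iota(\disc(\ddagger))$. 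You instead split $H=H^+\oplus H^-$ orthogonally for the trace form, extend a basis of $\OO^+$ to one of $\OO$, and use the Schur complement to get $\disc(\OO)=2\nrm(\alpha)\det(G^+)$ with $\alpha=p^-(g_4)$; this linear-algebra reduction is correct (the cross terms do cancel because $p^+(g_4)$ lies in the $\QQ$-span of $\OO^+$), and it cleanly isolates the one arithmetic fact needed, namely $4\nrm(\alpha)=|\iota(\disc(\ddagger))|$. That is an attractive repackaging: it makes visible that the whole $\text{dim}=4$ discrepancy is carried by the anti-invariant rank-one sublattice.

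However, the reduction is where your proof stops being a proof. The identity $4\nrm(\alpha)=|\iota(\disc(\ddagger))|$ rests on two structural claims --- $\OO\cap H^-=2\,p^-(\OO)$ and square-freeness of $\nrm(\mu)$ for the generator $\mu$ of $\OO\cap H^-$ --- which you assert "should" follow from maximality but do not establish. Note that $\OO\cap H^-=2\,p^-(\OO)$ is equivalent to saying that $\OO$ is \emph{not} the direct sum $\OO^+\oplus(\OO\cap H^-)$ (one always has $2p^-(\OO)\subseteq\OO\cap H^-\subseteq p^-(\OO)$ with index $2$ overall, so exactly one of the two equalities holds); deciding which occurs for a maximal $\ddagger$-order is a genuine local question. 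Moreover, your proposed justification for square-freeness --- that one could otherwise adjoin $\mu/k$ to get a larger $\ddagger$-order --- does not work as stated: in a noncommutative order, $\OO+\ZZ\tfrac{\mu}{k}$ is generally not closed under multiplication (products $\beta\cdot\tfrac{\mu}{k}$ with $\beta\in\OO$ escape it), and the ring generated by $\OO$ and $\mu/k$ need not be finitely generated as a $\ZZ$-module, so it need not be an order at all. Both facts are true, but proving them requires essentially the same prime-by-prime analysis of local maximal $\ddagger$-orders (or a careful bootstrap from $\discrd(\OO)=\disc(H)\cap\iota(\disc(\ddagger))$) that the paper carries out explicitly; as written, your argument defers exactly the content that constitutes the proof of the $\text{dim}=4$ case.
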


\begin{proof}
The volume of $\text{vol}\left(\RR^{n - 1}/R(\ZZ)^+\right)$ is equal to the volume of the fundamental parallelepiped spanned by the basis vectors $e_i$ of $R(\ZZ)^+$. The dot product in $\RR^n$ of $x,y \in R(\ZZ)$ can be computed as $\tr(x\overline{y})/2$---therefore, the square of the volume of this parallelepiped is
    \begin{align*}
        \frac{1}{2^{n - 1}}\left|\det\left(\tr(e_i \overline{e_j})\right)_{1\leq i,j \leq n - 1}\right| = \frac{|\disc(R(\ZZ)^+)|}{2^{n - 1}}.
    \end{align*}
    
\noindent For the cases $\text{dim} = 3$ and $\text{dim} = 5$, this resolves the issue entirely. It remains to compute $\disc(R(\ZZ)^+)$ if $\text{dim} = 4$, which we do by considering what happens at each completion $p$. If $p|\disc(R(\QQ))$ and $p \neq 2$, then
    \begin{align*}
        R(\ZZ_p)^+ = \ZZ_p \oplus \ZZ_p i \oplus \ZZ_p j
    \end{align*}
    
\noindent where $p|i^2$ and $p|j^2$ if and only if $p\nmid \iota(\disc(\ddagger))$. It is easy to compute that $\disc(R(\ZZ_p)^+) = p^2 \ZZ_p/\iota(\disc(\ddagger))$ in this case, which is to say that $\disc(R(\ZZ_p)^+) = \disc(R(\ZZ_p))/\iota(\disc(\ddagger))$. If $p = 2$, then there are exactly three possibilities as to the isomorphism class of $R(\ZZ_2)$ as a ring with involution, enumerated below.
    \begin{align*}
        \begin{array}{c|l|l}
        R(\QQ_2) & R(\ZZ_2)^+ & \disc(R(\ZZ_2)^+) \\ \hline
        \left(\frac{-1,-1}{\QQ_2}\right),\left(\frac{-1,3}{\QQ_2}\right) & \ZZ_2 \oplus \ZZ_2 i \oplus \ZZ_2 j & 2^3\ZZ_2 \\
        \left(\frac{-3,\pm 2}{\QQ_2}\right),\left(\frac{-3,\pm 6}{\QQ_2}\right) & \ZZ_2 \oplus \ZZ_2 \frac{1 + i}{2} \oplus \ZZ_2 j & 2^2 \ZZ_2 \\
        \left(\frac{2,6}{\QQ_2}\right) & \ZZ_2 \oplus \ZZ_2 i \oplus \ZZ_2 \frac{i + j}{2} & 2^3 \ZZ_2.
        \end{array}
    \end{align*}
    
\noindent In each case, we have $\disc(R(\ZZ_2)^+) = 2\disc(R(\ZZ_2))/\iota(\disc(\ddagger))$. Now, consider the case where $p\nmid \disc(R(\QQ))$. Unless $p \neq 2$ and $\disc(\ddagger) = -1 \mod 4$, up to isomorphism
    \begin{align*}
        R(\ZZ_p)^+ = \left\{\begin{pmatrix} a & b \\ b\lambda & d \end{pmatrix}\middle|a,b,d \in \ZZ_p\right\}.
    \end{align*}

\noindent for some $\lambda \in \ZZ_p$ so that $\disc(\ddagger) = \lambda \left(\ZZ_p^\times\right)^2$. The discriminant of this lattice is $2\lambda \ZZ_p = 2\iota(\disc(\ddagger)) = 2 \disc(R(\ZZ_p))/\iota(\disc(\ddagger))$. Finally, if $p = 2$ and $\disc(\ddagger) = -1 \mod 4$, then it is possible that instead
    \begin{align*}
        R(\ZZ_2) = \begin{pmatrix} 1 & -1 \\ 1 & 1 \end{pmatrix} \Mat(2,\ZZ_2) \begin{pmatrix} 1 & -1 \\ 1 & 1 \end{pmatrix}^{-1},
    \end{align*}
    
\noindent in which case
    \begin{align*}
        R(\ZZ_2)^+ = \left\{\begin{pmatrix} a + \frac{c - b \lambda}{\lambda - 1} & \frac{(b - c)\lambda}{\lambda - 1} \\ \frac{b - c}{\lambda - 1} & a + c + \frac{c - b}{\lambda - 1} \end{pmatrix}\middle|a,b,c \in \ZZ_2\right\}.
    \end{align*}

\noindent for some $\lambda \in \ZZ_2^\times$ so that $\disc(\ddagger) = \lambda \left(\ZZ_2^\times\right)^2$. Then $\disc(R(\ZZ_2)^+) = 8\lambda/(\lambda - 1)^2\ZZ_2 = 2\ZZ_2$. We see that in absolutely every case, $\disc(R(\ZZ_p)^+) = 2\disc(R(\ZZ_p))/\iota(\disc(\ddagger))$, and consequently
    \begin{align*}
        \frac{|\disc(R(\ZZ)^+)|}{2^{4 - 1}} = \frac{|\disc(R(\ZZ))|}{4|\iota(\disc(\ddagger))|}.
    \end{align*}
    
\noindent Taking a square root, we have the claimed result.
\end{proof}

\noindent To handle the sum, let $\hat{S}$ be the plane with normal vector $u \in R(\ZZ)_0^+$ such that $u$ is of minimal scaling. Then the bend of any sphere $S$ in the packing is in $2^l/\sqrt{\nrm(u)}\ZZ$ for some non-negative $l$. Consequently,
    \begin{align*}
        \sum_{S \in \mathcal{S}_{\Gamma_u,\hat{S}}^*/R(\ZZ)^+} \text{vol}\left(\text{Int}(S)\right) &= \sum_{S \in \mathcal{S}_{\Gamma_u,\hat{S}}^*/R(\ZZ)^+} \frac{\pi^{n/2}}{\Gamma(n/2 + 1)\kappa(S)^{n - 1}} \\
        &= \frac{\pi^{n/2}\nrm(u)^{(n - 1)/2}}{2^l\Gamma(n/2 + 1)} \sum_{k = 1}^\infty \frac{\mathcal{R}_{\Gamma_u,\hat{S}}(k)}{k^{n - 1}} 
    \end{align*}
    
\noindent where $\mathcal{R}_{\Gamma_u,\hat{S}}(k)$ is the number of spheres in $\mathcal{S}_{\Gamma_u,\hat{S}}/R(\ZZ)^+$ with bend $2^l k/\sqrt{\nrm(u)}$ and $\Gamma(\cdot)$ denotes the Gamma function. Counting the number of spheres in a packing with bend less than a parameter $T$ has been studied extensively over the last few decades (\cite{KontOh2011},\cite{LeeOh2013},\cite{OhShah2013},\cite{OhShah2016} and many more). In order to gain control on $\mathcal{R}_{\Gamma_u,\hat{S}}(k)$, we first remark on a few properties of $\Gamma_u$ that will allow us to apply results on the asymptotic behaviour of $\mathcal{R}_{\Gamma_u,\hat{S}}(k)$. (For more information on the Bowen-Margulis-Sullivan measure $m^\BMS$, skinning size $\sk_{\Gamma_u}(w_0)$, and critical exponent $\alpha$, see \cite{OhShah2012},\cite{OhShah2013},\cite{OhShah2016}.)
    \begin{enumerate}
        \item By Theorem \ref{thm: geometric finiteness},
        $\Gamma_u$ is geometrically finite, and it has been shown by Sullivan \cite{Sullivan1979} that geometric finiteness implies $|m^{\BMS}|<\infty$.
        \item The critical exponent $\alpha$ has been shown to be the Hausdorff dimension of the packing generated by $\Gamma_u$ which is strictly greater than 1. By Theorem 1.5 of Oh and Shah \cite{OhShah2013}, this implies $\sk_{\Gamma_u}(w_0)<\infty$.
        \item Since the orbit $w_0\Gamma_u$ is infinite, $\sk_{\Gamma_u}(w_0)>0$ and the constant $c(\Gamma_u)$ in Theorem \ref{OhShahthm} is positive.
    \end{enumerate}

We use the following result of Oh and Shah summarized below.
\begin{theorem}[Theorem 1.2 \cite{OhShah2013}]\label{OhShahthm}
Let $\Gamma < SO(n,1)$ be a non-elementary (in the Fuchsian-sense) discrete subgroup with $|m^{\BMS}| < \infty$, and $V$ a vector space on which $\Gamma$ acts from the right. Suppose that $w_0\Gamma$ is discrete and that its skinning size $\sk_\Gamma(w_0) := |\mu^{\PS}|$ is finite. Let $\alpha$ be the critical exponent of $\Gamma$. Then there exists $\lambda\in \NN$ such that for any norm $\Vert\cdot\Vert$ on $V$ invariant under the Cartan involution stabilizer of $SO(n,1)$, we have
    \begin{align*}
        \lim_{T \to \infty} \frac{\# \{w \in w_0 \Gamma| \Vert w \Vert <T \}}{T^{\alpha/\lambda}} = \frac{|\nu_0| \cdot \sk_{\Gamma}(w_0) }{\alpha \cdot |m^{\BMS}| \cdot \Vert w_0^\lambda \Vert^{\alpha / \lambda}} = \frac{c(\Gamma)}{\alpha}.
    \end{align*}
\end{theorem}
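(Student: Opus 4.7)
The statement is the Oh--Shah counting theorem quoted verbatim from \cite{OhShah2013}, so in the paper the ``proof'' is presumably a citation. The plan below is how I would reproduce their argument, which follows the classical Margulis strategy for translating orbit-counting into equidistribution of translated horospheres, adapted to the infinite-volume, geometrically finite setting.

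The first step is to recast the counting problem as a dynamical one on $\Gamma \backslash G$ with $G = SO(n,1)^\circ$. Identify $w_0 G$ with a homogeneous space $H \backslash G$ (where $H = \mathrm{Stab}_G(w_0)$), and view $w_0 \Gamma$ as the $\Gamma$-image of a single point under the right action of $G$ on $V$. The ball $\{\Vert w\Vert < T\}$ pulls back to a family of expanding sets $B_T \subset H \backslash G$; by the $KAK$-decomposition and the invariance properties of $\Vert \cdot \Vert$, these $B_T$ are approximately $H$-translates of a Cartan sector of exponent $\lambda \log T$. So
\begin{align*}
\#\{w \in w_0 \Gamma: \Vert w \Vert < T\} \;=\; \#\bigl((H\Gamma) \cap B_T\bigr)
\end{align*}
modulo a uniformly bounded correction from the stabilizer $H \cap \Gamma$.

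Next, construct the measure-theoretic ingredients. Using Patterson--Sullivan theory, build the $\Gamma$-invariant conformal density $\{\nu_x\}$ of dimension $\alpha$ on the limit set $\Lambda(\Gamma) \subset \partial \HH^n$, then assemble from it the Bowen--Margulis--Sullivan measure $m^{\BMS}$ on $\Gamma \backslash T^1 \HH^n$ and the skinning measure $\mu^{\PS}$ on the unstable horospherical leaf attached to the $H$-orbit of a lift $\tilde w_0$. Finiteness $|m^{\BMS}| < \infty$ is guaranteed by geometric finiteness plus Sullivan's criterion; $|\mu^{\PS}| = \sk_\Gamma(w_0) < \infty$ is assumed. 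The critical input is then mixing of the BMS measure under the geodesic flow, which is due to Babillot and Roblin (with Winter's quantitative refinements in the cusped case); this is the technical heart of the proof.

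With mixing in hand, the standard ``thickening'' argument of Eskin--McMullen produces the equidistribution statement one needs: if $\varphi \in C_c(\Gamma \backslash G)$ and $a_t$ is the diagonal one-parameter subgroup acting as the geodesic flow, then
\begin{align*}
e^{-(2\delta - \alpha)t}\!\!\int_{H \cap \Gamma \backslash H}\!\!\! \varphi(hg a_t)\, d\mu^{\PS}(h) \;\longrightarrow\; \frac{|\mu^{\PS}|}{|m^{\BMS}|}\int_{\Gamma \backslash G}\!\!\! \varphi\, dm^{\BR}
\end{align*}
against the Burger--Roblin measure, uniformly on compacta. Finally, one integrates this against a sequence of indicator-type test functions approximating $B_T$ (suitably smoothed on the scale of the injectivity radius), noting that $\mathrm{vol}(B_T)$ is exactly $T^{\alpha/\lambda}$-homogeneous in the relevant direction, to extract the asymptotic
\begin{align*}
\#\bigl((H \cap \Gamma)\backslash H \cap B_T\bigr) \;\sim\; \frac{|\nu_0|\,\sk_\Gamma(w_0)}{\alpha\,|m^{\BMS}|\,\Vert w_0^\lambda\Vert^{\alpha/\lambda}}\, T^{\alpha/\lambda}.
\end{align*}
The main obstacle is the mixing step: for geometrically finite $\Gamma$ with cusps, one must control the BMS mass that escapes down the cusps, which requires either spectral gap results or Roblin's geometric coding. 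A secondary subtlety is the passage from smooth equidistribution to sharp counting, where one needs a wavefront-type lemma to ensure that translates of smooth bump functions are uniformly bi-Lipschitz along the expanding direction; in the infinite-volume case this is the content of Kleinbock--Margulis and Mohammadi--Oh type arguments, and it is the step where the constant $c(\Gamma)$ emerges with the stated normalization.
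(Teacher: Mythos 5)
You are right that the paper offers no proof of this statement at all---it is imported verbatim as Theorem 1.2 of Oh--Shah \cite{OhShah2013}, so the citation itself is the ``proof,'' and your reconstruction (Patterson--Sullivan density, BMS/skinning measures, mixing, equidistribution toward the Burger--Roblin measure, then a thickening/wavefront argument to pass to sharp counting) is an accurate outline of how Oh and Shah actually argue. The only quibbles are notational---your normalization $e^{-(2\delta-\alpha)t}$ uses an undefined $\delta$ where the critical exponent $\alpha$ already plays that role---but nothing in the paper depends on the internals of this proof, so the citation-level treatment is exactly what is needed here.
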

Considering only the machinery we need at hand, we obtain $\mathcal{R}_{\Gamma_u,\hat{S}}(k) \sim \frac{c(\Gamma_u)}{\alpha}T^{\alpha/\lambda} - \frac{c(\Gamma_u)}{\alpha}(T-1)^{\alpha/\lambda}\sim \frac{c(\Gamma_u)}{\alpha}T^{\alpha/\lambda-1}$ for $T=2^l k/\sqrt{\nrm(u)}$. In conjunction with the above calculations, we have
    \begin{align*}
        \sum_{S \in \mathcal{S}_{\Gamma_u,\hat{S}}^*/R^+(\ZZ)} \text{vol}\left(\text{Int}(S)\right)
        &= \frac{\pi^{n/2}\nrm(u)^{(n - 1)/2}}{2^l\Gamma(n/2 + 1)} \sum_{k = 1}^\infty \frac{\mathcal{R}_{\Gamma_u,\hat{S}}(k)}{k^{n - 1}} 
        \\
        &= \frac{c(\Gamma_u)}{\alpha} \frac{\pi^{n/2}\nrm(u)^{\frac{n-\alpha/\lambda}{2}}}{2^{l(2-\alpha/\lambda)}\Gamma(\frac{n}{2}+1)}\zeta\left(n-\frac{\alpha}{\lambda}\right) +O(1).
    \end{align*}

Excluding $c(\Gamma_u)$ (for which we only know $0<c(\Gamma_u)<\infty$), we are able to compute or effectively estimate all quantities present in the above equation. In the direction of computing the constant present in the representation number, the case of $n=3$ was handled independently by Lee-Oh and Vinogradov (with different degrees on their error terms) for Apollonian circle packings. In particular, for a circle packing $\mathcal{P}$ and $\mathcal{N}_\mathcal{P}(T)=\{C \in \mathcal{P}: Curv(C)<T\}$, it was shown by Vinogradov \cite{vinogradov_2013} that for any $\varepsilon>0$, one has
\begin{align*}
    \mathcal{N}_\mathcal{P}(T) = c_\mathcal{P}\cdot T^\alpha + O_\varepsilon(T^{\frac{128\delta+s_1}{129}+\varepsilon})
\end{align*} where the constant $c_\mathcal{P}$ is given explicitly in \cite[Remark 2.2.2]{vinogradov_2013} in terms of the Patterson-Sullivan measure on $\partial \HH^3$ and its quotients, and $\alpha \approx 1.30568$ is a universal constant of the circle packing computed by McMullen \cite{mcmullen_1998}, where the universality of the constant comes from all circle packings being related by M\"{o}bius transformations. The methods of Lee and Oh relate the constant $c_\mathcal{P}$ to $\mathcal{H}^\alpha(Res(\mathcal{P}))$, the $\alpha$-dimensional Hausdorff measure of the residual set of the packing $\mathcal{P}$, and a constant $c>0$ given in terms of the base eigenfunction on the Laplacian which is independent of the packing. It was remarked by Lee and Oh that their methods should extend to be able to handle packings in higher dimensions; however, as the dimension increases the computations needed to understand the spectral theory associated to the symmetry group of the packing get quite intricate. For the case of a general packing $\cP$ in the plane, Oh and Shah \cite{OhShah2012} gave a similar asymptotic formula (without an error term) in terms of the skinning size, the BMS measure, and the measure of the region that the circles are taken to intersect. For our uses, non-zero finiteness of the constant is strong enough to tackle the problem at hand.

By combining the previous calculation with Theorem \ref{scalingfactor}, we obtain
    \begin{align*}
        \delta\left(\mathcal{A}_{R(\ZZ),u}\right) &= \frac{1}{\text{vol}\left(\RR^{n - 1}/R(\ZZ)^+\right)}\sum_{S \in \mathcal{S}_{\Gamma_u,\hat{S}}^*/R(\ZZ)^+} \text{vol}\left(\text{Int}(S)\right) \\ &= \begin{cases}
        \frac{c(\Gamma_u)}{\alpha} \frac{\pi^{3/2}\nrm(u)^{\frac{3-\alpha/\lambda}{2}}}{2^{l(2-\alpha/\lambda)}\Gamma(\frac{5}{2})}\zeta\left(3-\frac{\alpha}{\lambda}\right)
        \frac{2}{\sqrt{|\disc(R(\ZZ))|}} & \text{if $\text{dim} = 3$} \\ \frac{c(\Gamma_u)}{\alpha} \frac{\pi^{2}\nrm(u)^{\frac{4-\alpha/\lambda}{2}}}{2^{l(2-\alpha/\lambda)}\Gamma(3)}\zeta\left(4-\frac{\alpha}{\lambda}\right) \frac{2\sqrt{|\iota(\disc(\ddagger))|}}{\sqrt{|\disc(R(\ZZ))|}} & \text{if $\text{dim} = 4$} \\ \frac{c(\Gamma_u)}{\alpha} \frac{\pi^{5/2}\nrm(u)^{\frac{5-\alpha/\lambda}{2}}}{2^{l(2-\alpha/\lambda)}\Gamma(\frac{7}{2})}\zeta\left(5-\frac{\alpha}{\lambda}\right) \frac{4}{\sqrt{|\disc(R(\ZZ))|}} & \text{if $\text{dim} = 5$} \end{cases}\\
        &\rightarrow 0 \qquad \mathrm{ as }\  |\disc(R(\ZZ))|\to \infty.
    \end{align*}
This asymptotic approach gives us an explicit growth rate in terms of the discriminant at which the density decreases but the lack of control on the constant makes giving precise estimates difficult. To this end, we look into cases where we can say when there is a sphere that intersects the fundamental region without intersecting $\mathcal{A}_{R(\ZZ),u}$. This will provide us with an upper bound on the density. By then looking at the orbit of this sphere, we can get closer and closer to the true value of $\delta\left(\mathcal{A}_{R(\ZZ),u}\right)$.

\section{Forbidden Balls:}\label{section: forbidden balls}

We begin this section with a simple observation.

\begin{theorem}\label{thm: forbidden balls force infinite index}
If there exists $u \in R(\ZZ)_0^+$ and an open ball $B \subset \RR^{n - 1}$ such that $\gamma.S_u$ is not contained in $B$ for any $\gamma \in \mathcal{W}$, then $E(\ZZ)$ is infinite index in $G(\ZZ)$.
\end{theorem}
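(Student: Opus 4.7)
The plan is to prove the contrapositive: if $E(\ZZ)$ has finite index in $G(\ZZ)$, then for every nonzero $u \in R(\ZZ)_0^+$ and every open ball $B \subset \RR^{n-1}$ some $\gamma \in \mathcal{W}$ satisfies $\gamma.S_u \subset B$. By Theorem~\ref{thm: Cohn decomposition}, $\mathcal{W}$ is finite index in $E(\ZZ)$, hence in $G(\ZZ)$, and so $\mathcal{W}$ is itself a lattice in $G(\RR)$; in particular its limit set is all of $\partial\HH^{dim}$. Moreover $\mathcal{W}$ contains $-I = W(0)^2$ and every translation $\tau_\alpha = -W(-\alpha)W(0)$ with $\alpha \in R(\ZZ)^+$, so $\infty$ is a parabolic fixed point of $\mathcal{W}$, and its orbit $\mathcal{W}.\infty$ is therefore dense in $\RR^{n-1}$.

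The core construction proceeds as follows. Fix an open ball $B$ and pick $\gamma_0 = \bigl(\begin{smallmatrix} a & b \\ c & d\end{smallmatrix}\bigr) \in \mathcal{W}$ with $\gamma_0.\infty \in B$ (necessarily $c \neq 0$), and consider the perturbed elements $\gamma_0 \tau_\beta \in \mathcal{W}$ for $\beta \in R(\ZZ)^+$. Then $\gamma_0 \tau_\beta.\infty = \gamma_0.\infty$ stays pinned inside $B$, while the bottom row of $\gamma_0 \tau_\beta$ becomes $(c,\,c\beta + d)$. Plugging this into the normalized bend formula of Section~\ref{section: super-apollonian}, and using that $\bar u = -u$ makes $u\bar\beta - \beta u = \tr(u\bar\beta)$ a real number commuting with $c$, the computation collapses to
\[
    \kappa_u(\gamma_0 \tau_\beta) \;=\; \tr(u\bar\beta)\,\nrm(c) + \kappa_u(\gamma_0).
\]

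Since the trace pairing is non-degenerate on $R(\RR)^+$, for the nonzero $u$ there is some $\beta_0 \in R(\ZZ)^+$ with $\tr(u\bar\beta_0) \neq 0$; taking $\beta = n\beta_0$ and letting $n \to \infty$ then makes the bend grow linearly and the Euclidean radius of $\gamma_0 \tau_\beta.S_u$ shrink to zero. Because $S_u$ contains the point $\infty$, each sphere $\gamma_0 \tau_\beta.S_u$ passes through $\gamma_0.\infty \in B$, so its center lies within one radius of $\gamma_0.\infty$; once the radius is less than half the distance from $\gamma_0.\infty$ to $\partial B$, the whole sphere is contained in $B$, contradicting the hypothesis.

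The main subtlety to verify is the density of $\mathcal{W}.\infty$ in $\RR^{n-1}$, which is the only place the lattice property really enters. This is a standard consequence of the fact that, for a lattice $\Gamma \subset \Isom(\HH^{dim})$ whose limit set is all of $\partial\HH^{dim}$, every $\Gamma$-orbit on the limit set is dense; applied to the parabolic fixed point $\infty$ it yields exactly what is needed. Everything else reduces to the bend computation above together with an elementary Euclidean estimate.
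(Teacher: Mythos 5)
Your proof is correct and follows the same skeleton as the paper's: argue the contrapositive, note that $\mathcal{W}$ is finite index in $E(\ZZ)$ hence a lattice with full limit set, deduce that $\mathcal{W}.\infty$ is dense, and then produce from a single $\gamma_0$ with $\gamma_0.\infty \in B$ a sphere in the orbit of $S_u$ lying entirely inside $B$. Where you differ is in the last step: the paper simply asserts that ``there are infinitely many spheres tangent to any such sphere at points in $B$, and those spheres can have arbitrarily large bends,'' without exhibiting them, whereas you construct them explicitly as $\gamma_0\tau_\beta.S_u$, all pinned through the point $\gamma_0.\infty$, and verify via the bend formula that
\begin{align*}
\kappa_u(\gamma_0\tau_\beta) = c\left(u\overline{\beta} - \beta u\right)\overline{c} + \kappa_u(\gamma_0) = \tr(u\overline{\beta})\,\nrm(c) + \kappa_u(\gamma_0)
\end{align*}
grows without bound along $\beta = n\beta_0$ (one can even take $\beta_0 = u$ itself, since $\tr(u\overline{u}) = 2\nrm(u) \neq 0$, avoiding any appeal to non-degeneracy of the trace pairing). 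This computation, together with the observation that $c \neq 0$ because $\gamma_0.\infty$ is finite, is exactly the content the paper's one-line assertion leaves implicit, so your version is the more complete of the two; the only cost is a short matrix calculation.
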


\begin{proof}
Suppose that $E(\ZZ)$ is a finite index subgroup of $G(\ZZ)$. Then, equivalently, so is $\mathcal{W}$, and all three of these groups are lattices. As they are lattices, their limit sets are all of $\RR^{n - 1}$, and in particular for any open ball $B \subset \RR^{n - 1}$, there must exist $\gamma \in \mathcal{W}$ such that $\gamma.\infty \in B$. If we fix a normal vector $u \in R(\ZZ)_0^+$, this means that $\gamma.S_u$ intersects $B$. Of course, there are infinitely many spheres tangent to any such sphere at points in $B$, and those spheres can have arbitrarily large bends. Therefore, for any open ball $B$, there must exist some $\gamma' \in \mathcal{W}$ such that $\gamma'.S_u$ is completely contained in $B$.
\end{proof}

One consequence of this is that we want to prove that $E(\ZZ)$ is infinite index in $G(\ZZ)$, it suffices to find a sphere $S$ that does not intersect any sphere in $\mathcal{S}_{R(\ZZ),u}$---after all, if there exists some sphere in $\mathcal{S}_{R(\ZZ),u}$ contained in the interior of that sphere, then there must exist some sequence of spheres in $\mathcal{S}_{R(\ZZ),u}$ such that each intersects the subsequent one, such that this chain starts in the interior of the sphere and ends in its exterior. We call such an object a \emph{forbidden ball}.

\begin{definition}
For any $u \in R(\ZZ)_0^+$, a sphere $S$ in $\RR^{n - 1} \cup \{\infty\}$ is a \emph{forbidden ball} for $\mathcal{S}_{R(\ZZ),u}$ if its interior does not intersect any sphere in $\mathcal{S}_{R(\ZZ),u}$.
\end{definition}

The definition of a forbidden ball is a generalization of ``ghost circles" and ``ghost spheres" found in Katherine Stange's work \cite{Stange2018} and the work of the second author \cite{Sheydvasser2019}, in the sense that ghost circles/spheres are always forbidden balls, but not vice versa. All forbidden balls that we shall construct shall come around the same way, namely by looking for oriented spheres orthogonal to unit spheres centered at points in $R(\ZZ)^+$.

\begin{theorem}\label{thm:n=3forbiddenball}
For $\text{dim} = 3$, if $|\disc(R(\ZZ))| > 11$, then if we take $u = \sqrt{-|\disc(R(\ZZ))|}$, the circle with center
    \begin{align*}
        \begin{cases} \frac{1}{2} + \frac{\sqrt{-|\disc(R(\ZZ))|}}{4} & \text{if $|\disc(R(\ZZ))| = 0 \mod 4$} \\ \frac{1}{2} + \frac{|\disc(R(\ZZ))|-1}{4\sqrt{-|\disc(R(\ZZ))|}} & \text{if $|\disc(R(\ZZ))| = 1 \mod 4$} \end{cases}
    \end{align*}
    
\noindent and radius
    \begin{align*}
        \begin{cases} \frac{\sqrt{|\disc(R(\ZZ))| - 12}}{4} & \text{if $|\disc(R(\ZZ))| = 0 \mod 4$} \\ \frac{\sqrt{|\disc(R(\ZZ))|^2 - 14|\disc(R(\ZZ))| + 1}}{4\sqrt{|\disc(R(\ZZ))|}} & \text{if $|\disc(R(\ZZ))| = 1 \mod 4$} \end{cases}
    \end{align*}
    
\noindent is a forbidden ball for $\mathcal{S}_{R(\ZZ),u}$.
\end{theorem}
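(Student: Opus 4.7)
I would work in the inversive-coordinate framework and argue that $|b(B,C)| \geq 1$ for every oriented circle $C \in \mathcal{S}_{R(\ZZ),u}$, where $b$ denotes the inversive product. Since the planes of the orbit are exactly the translates $\RR + \alpha$ for $\alpha \in R(\ZZ)$, and the imaginary part of $z_B$ is $\sqrt{D}/4 > r_B$ in both parity cases, every such plane lies at distance $\geq \sqrt{D}/4$ from $z_B$ and does not meet $B^\circ$. So it suffices to handle bounded circles.

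For a bounded $C = \gamma.S_u$ with $\gamma = \begin{pmatrix} a & b \\ c & d\end{pmatrix} \in \mathcal{W}$, direct computation with $u=\sqrt{-D}$ shows that $c\bar d - d\bar c$ lies in the trace-zero lattice $\ZZ\sqrt{-n}$ (for $D=4n$) or $\ZZ\sqrt{-D}$ (for $D \equiv 1 \bmod 4$), and consequently the normalized bend $\kappa_u(\gamma)$ is an integer multiple of $D$; writing $\kappa_u = Dm$, the structural identity $\xi_u = u(a\bar d - b\bar c)$ lets me write $\xi_u = u\beta$ for some $\beta \in R(\ZZ)$. The quadratic-form constraint $-\kappa_u\kappa_u' + \nrm(\xi_u) = D$ then forces $\kappa_u' = D(\nrm(\beta)-1)/(Dm)$, so orbit-membership requires $k := (\nrm(\beta)-1)/(Dm) \in \ZZ$ (this is really the congruence obtained by adapting Theorem \ref{thm: congruence restriction using normalized covering vectors} to our non-normalized $u$).

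Substituting this parametrization together with the explicit values $r_B^2 = (D-12)/16$ (resp.\ $(D^2-14D+1)/(16D)$) and $z_B$ into
\[
b(B,C) \;=\; \frac{r_B^2 + r_C^2 - |z_B - z_C|^2}{2\, r_B\, r_C},
\]
the calculation collapses, after writing $\beta = b_1 + b_2\sqrt{-n}$ (or the analogous expression for $D$ odd), to
\[
b(B,C) \;=\; \bigl(b_1 - b_2 - 2(k+m)\bigr)\sqrt{\tfrac{n}{n-3}}
\]
in the even case (and a structurally identical expression, with a scalar $>1$, in the odd case). Because $D>11$ gives $n > 3$, the scalar $\sqrt{n/(n-3)}$ exceeds $1$; since $b_1 - b_2 - 2(k+m)$ is an integer, $|b(B,C)| \geq 1$ whenever this integer is nonzero.

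The last step is to rule out $b(B,C)=0$. Setting the integer to zero and eliminating $k$ via $\nrm(\beta)-1 = Dmk$ produces, after completing the square, a Pell-type equation
\[
(b_1 - nm)^2 + n(b_2 + m)^2 \;=\; 1 + nm^2(n-3),
\]
while orbit-integrality of $k$ forces $b_1 - b_2$ to be even. Reducing the Pell equation modulo $2$ (when $n \equiv 1 \bmod 4$) or modulo $4$ (when $n \equiv 2 \bmod 4$) and tracking parities shows the two conditions are incompatible; the $D \equiv 1 \bmod 4$ case runs identically with the basis $\{1,(1+\sqrt{-D})/2\}$. The main obstacle is precisely this final mod-$2$/mod-$4$ case check, which depends on $D \bmod 8$ and is where the hypothesis $D > 11$ (equivalently $n > 3$) becomes essential, both to make $\sqrt{n/(n-3)} > 1$ and to ensure the Pell equation's right-hand side has the right sign.
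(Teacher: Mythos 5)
Your argument is correct in outline but takes a genuinely different route from the paper's. The paper disposes of this theorem in two sentences: the circles in the statement are precisely Stange's ghost circles, and \cite[Lemmas 7.7, 7.8]{Stange2018} already show they meet no circle in the full orbit $G(\ZZ).S_u$, hence none in the subset $\mathcal{S}_{R(\ZZ),u}$. You have instead reproven that fact from scratch by the inversive-product-plus-congruence method, which is exactly the technique the paper deploys for its genuinely new forbidden balls in dimensions $4$ and $5$ (Theorems \ref{thm:ghost_spheres_old} and \ref{thm: dim_5 case also has forbidden balls}); so your computation is a self-contained alternative, at the cost of redoing work the paper outsources. I checked your key identity in the even case: with $D=4n$, $\kappa_u(\gamma)=Dm$, $\kappa_u'(\gamma)=Dk$, and $\xi_u(\gamma)=u(b_1+b_2\sqrt{-n})$, one does get $b(B,C)=\bigl(b_1-b_2-2(k+m)\bigr)\sqrt{n/(n-3)}$, and your normal form $(b_1-nm)^2+n(b_2+m)^2=1+nm^2(n-3)$ together with $2\mid(b_1-b_2)$ does produce a parity contradiction, so $|b(B,C)|\ge\sqrt{n/(n-3)}>1$ for every bounded circle in the orbit. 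Two repairs are needed, though. First, $|b(B,C)|>1$ only excludes transversal intersection and tangency; it does not by itself exclude a circle of the orbit lying entirely inside $\mathrm{Int}(B)$, which would equally violate forbiddenness. You must add the tangency-connectivity argument the paper uses elsewhere: every circle of $\mathcal{S}_{R(\ZZ),u}$ is joined to $\RR$ by a finite chain of mutually tangent circles, $\RR$ lies outside $\overline{B}$, and since no circle of the orbit touches $\partial B$, no link of the chain can cross into $\mathrm{Int}(B)$. Second, in the odd-discriminant case the imaginary part of the center is $(D-1)/(4\sqrt{D})$ (up to sign), not $\sqrt{D}/4$; your conclusion about the planes survives because the planes of the orbit sit at heights in $\tfrac{\sqrt{D}}{2}\ZZ$ and $D^2-14D+1<(D-1)^2$, but the claim as written is false and should be corrected before the odd case can be declared ``identical.''
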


\begin{proof}
These are the ghost circles defined by Stange, and so we know that they don't intersect any circle $C \in G(\ZZ).S_u$\cite[Lemmas 7.7, 7.8]{Stange2018}. Consequently, any such ghost circle does not intersect any circle in $\mathcal{S}_{R(\ZZ),u}$, which is a subset.
\end{proof}

\begin{corollary}\label{cor:n=3forbiddensity}
For dim=3, the volume of the fundamental parallelogram with the forbidden ball removed is
    \begin{align*}
        \begin{cases}
            4 \frac{1}{\sqrt{|\disc(R(\ZZ)|}}+O\left(\frac{1}{\sqrt{|\disc(R(\ZZ)|}^3}\right) & \mathrm{if }\ |\disc(R(\ZZ))|=0 \mod 4 \\
            6 \frac{1}{\sqrt{|\disc(R(\ZZ)|}}+O\left(\frac{1}{\sqrt{|\disc(R(\ZZ)|}^3}\right) & \mathrm{if }\ |\disc(R(\ZZ))|=1 \mod 4 
        \end{cases}
    \end{align*}   
\end{corollary}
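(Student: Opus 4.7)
The plan is a direct area computation. By Theorem~\ref{scalingfactor} applied in the case $\text{dim}=3$, the fundamental parallelogram for $R(\ZZ)^+$ has area $\sqrt{|\disc(R(\ZZ))|}/2$, so the quantity in question equals
\begin{align*}
\tfrac{1}{2}\sqrt{|\disc(R(\ZZ))|} \;-\; A_{\mathrm{ball}},
\end{align*}
where $A_{\mathrm{ball}}$ denotes the area of the intersection of the forbidden ball with the fundamental parallelogram. Theorem~\ref{thm:n=3forbiddenball} gives the ball's center and radius explicitly, so this is a purely Euclidean problem.

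For sufficiently large $|\disc(R(\ZZ))|$, the radius $r$ satisfies $r>1/2$ but $2r$ is strictly less than the height of the parallelogram. Thus the ball extends past the parallelogram in the direction parallel to $S_u$---where $R(\ZZ)^+$ contains $\ZZ$, so the ball wraps around itself under translation---yet is strictly contained in the orthogonal direction. Slicing at vertical displacement $t$ from the center gives a horizontal chord of length $2\sqrt{r^2-t^2}$, whose intersection with a single width-$1$ period has length $\min(1,2\sqrt{r^2-t^2})$. Splitting the integral at $|t|=\sqrt{r^2-1/4}$ and using the standard antiderivative of $\sqrt{r^2-t^2}$, I would obtain the closed form
\begin{align*}
A_{\mathrm{ball}} \;=\; \sqrt{r^2 - \tfrac{1}{4}} \;+\; 2r^2 \arcsin\!\left(\tfrac{1}{2r}\right).
\end{align*}

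The final asymptotic then follows from the Taylor expansions $\arcsin(x) = x + x^3/6 + O(x^5)$ and $\sqrt{r^2 - 1/4} = r - 1/(8r) + O(r^{-3})$, after substituting the explicit $r$ from Theorem~\ref{thm:n=3forbiddenball} in each congruence class of $|\disc(R(\ZZ))| \bmod 4$. The case $|\disc(R(\ZZ))| \equiv 1 \bmod 4$ proceeds identically after translating the parallelogram so that the ball lies inside it; the ball sits symmetrically on the axis of the strip in both cases, so the geometry is essentially the same. The main obstacle will be bookkeeping the lower-order terms to extract the correct leading constants and to certify the $O(|\disc(R(\ZZ))|^{-3/2})$ error uniformly across both residue classes.
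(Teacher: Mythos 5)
Your strategy is sound but genuinely different from the paper's, and the difference matters. The paper does not compute the area of the complement exactly: it bounds it from above by two full-width horizontal strips, cut at the heights where the forbidden ball meets a lateral edge of the fundamental domain, and its proof explicitly describes the displayed quantity as ``an upper bound of the area outside the forbidden ball.'' Your computation, by contrast, is exact. The closed form $A_{\mathrm{ball}}=\sqrt{r^2-\tfrac14}+2r^2\arcsin\bigl(\tfrac{1}{2r}\bigr)$ is correct, and the torus picture (chords of length $\min(1,2\sqrt{r^2-t^2})$ per unit horizontal period) is the right way to handle both residue classes, including the slanted parallelogram when $|\disc(R(\ZZ))|\equiv 1\bmod 4$. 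One wording caveat: the ball never ``lies inside'' the parallelogram after a translation, since its horizontal extent is comparable to $\sqrt{|\disc(R(\ZZ))|}\gg 1$; what makes your slicing argument work is precisely the identification $x\sim x+1$, not containment.

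The catch is that, carried to completion, your computation will not reproduce the stated constants. Writing $d=|\disc(R(\ZZ))|$, your formula expands as $A_{\mathrm{ball}}=2r-\tfrac{1}{12r}+O(r^{-3})$, so the complement has area $\tfrac{\sqrt d}{2}-2r+\tfrac{1}{12r}+O(r^{-3})$. For $d\equiv 0\bmod 4$, substituting $r=\tfrac{\sqrt{d-12}}{4}$ gives $\tfrac{3}{\sqrt d}+\tfrac{1}{3\sqrt d}+O(d^{-3/2})=\tfrac{10}{3\sqrt d}+O(d^{-3/2})$, not $\tfrac{4}{\sqrt d}$; the $d\equiv 1$ case similarly yields $\tfrac{23}{6\sqrt d}$ rather than $\tfrac{6}{\sqrt d}$. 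The gap between your answer and the paper's is accounted for by the lens-shaped caps of the ball that protrude into the paper's two strips (area $\tfrac{1}{6r}\sim\tfrac{2}{3\sqrt d}$ in the first case, plus an additional over-count from the slanted edge in the second). So either the corollary is read as the upper bound its own proof establishes --- in which case your argument proves a strictly sharper asymptotic and the constants $4$ and $6$ are simply not the ones an exact computation produces --- or it is read as an exact asymptotic, in which case your method shows the stated leading constants are wrong. Either way, do not expect the ``bookkeeping of lower-order terms'' to extract $4$ and $6$: they are not there to be extracted, and you should reconcile this discrepancy explicitly rather than force your expansion to match the displayed values.
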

\begin{proof}
Let $d=|\disc(R(\ZZ))|$. In the case of $d=0 \mod 4$, $\mathfrak{o}_K$ has integral basis $\{1,\frac{\sqrt{-d}}{2}$\}, which gives us a fundamental rectangle of width 1 and height $\frac{\sqrt{d}}{2}$. By Theorem \ref{thm:n=3forbiddenball}, the forbidden ball in our packing has center $\frac{1}{2}+\frac{\sqrt{-d}}{4}$ and radius $\frac{\sqrt{d - 12}}{4}$. This ball intersects the boundary of the fundamental region when $x=0$, giving us the values of $y$ on the forbidden ball to be
    \begin{align*}
        \left(0-\frac{1}{2}\right)^2&+\left(y-\frac{\sqrt{d}}{4}\right)^2 = \frac{d-12}{16} \\
        &\Rightarrow y = \pm \frac{
        \sqrt{d-16}}{4}+\frac{\sqrt{d}}{4}
    \end{align*}
The regions contained between these two values of $y$ and $0 \leq x \leq 1$ lie strictly in the forbidden ball, but the region below and above these $y$ values are an upper bound of the area outside the forbidden ball. Explicitly, this is given by
    \begin{align*}
        &\left(-\frac{
        \sqrt{d-16}}{4}+\frac{\sqrt{d}}{4}-0\right) + \left(\frac{\sqrt{d}}{2} - \frac{
        \sqrt{d-16}}{4}-\frac{\sqrt{d}}{4}\right) \\
        &= 2\left(\frac{\sqrt{d}}{4}-\frac{
        \sqrt{d-16}}{4}\right) = \frac{1}{2}\left(8\frac{1}{\sqrt{d}}+32 \frac{1}{\sqrt{d}^{ 3}}+O\left(\frac{1}{\sqrt{d}^5}\right)   \right) \\
        &=\ 4\frac{1}{\sqrt{d}}+O\left(\frac{1}{\sqrt{d}^3}\right)
    \end{align*}
For the case of $d=1 \mod 4$, $\mathfrak{o}_K$ has integral basis $\{1,\frac{1+\sqrt{-d}}{2}$\}, which gives us a fundamental parallelogram generated by the vectors 1 and $\frac{1+\sqrt{-d}}{2}$. The forbidden ball with center $\frac{1}{2} + \frac{d-1}{4\sqrt{-d}}$ and radius $\frac{\sqrt{d^2 - 14d + 1}}{4\sqrt{d}}$ fully contains the region in the parallelogram whose $y$ coordinates are those where the ball intersects the boundary of the fundamental parallelogram on the right side of the parallelogram. Points on this edge satisfy $y=\sqrt{d}(x-1)$, giving us
    \begin{align*}
        &\left(\frac{y}{\sqrt{d}}+\frac{1}{2}   \right)^2 + \left(y- \frac{d-1}{4\sqrt{d}}    \right)^2 = \frac{d^2 - 14d + 1}{16d} \\
        \Rightarrow&\ y=\frac{\pm \sqrt{d(d^2-22d-7)}+\sqrt{d}^3-3\sqrt{d}}{4(d+1)}
    \end{align*}
    Therefore we get an upper-bound on the size outside the forbidden ball to be
    \begin{align*}
        &\left(\frac{- \sqrt{d(d^2-22d-7)}+\sqrt{d}^3-3\sqrt{d}}{4(d+1)}-0\right) 
         +\left(\frac{\sqrt{d}}{2}- \frac{ \sqrt{d(d^2-22d-7)}+\sqrt{d}^3-3\sqrt{d}}{4(d+1)}\right) \\
         &= \frac{\sqrt{d}}{2}-2 \frac{\sqrt{d(d^2-22d-7)}}{4(d+1)}=6\frac{1}{\sqrt{d}}+26\frac{1}{\sqrt{d}^3} + O\left(\frac{1}{\sqrt{d}^5} \right) = 6\frac{1}{\sqrt{d}}+ O\left(\frac{1}{\sqrt{d}^3} \right).
    \end{align*}

\end{proof}

\begin{figure}
    \centering
    \begin{tabular}{cc}
    \includegraphics[height = 0.3\textheight]{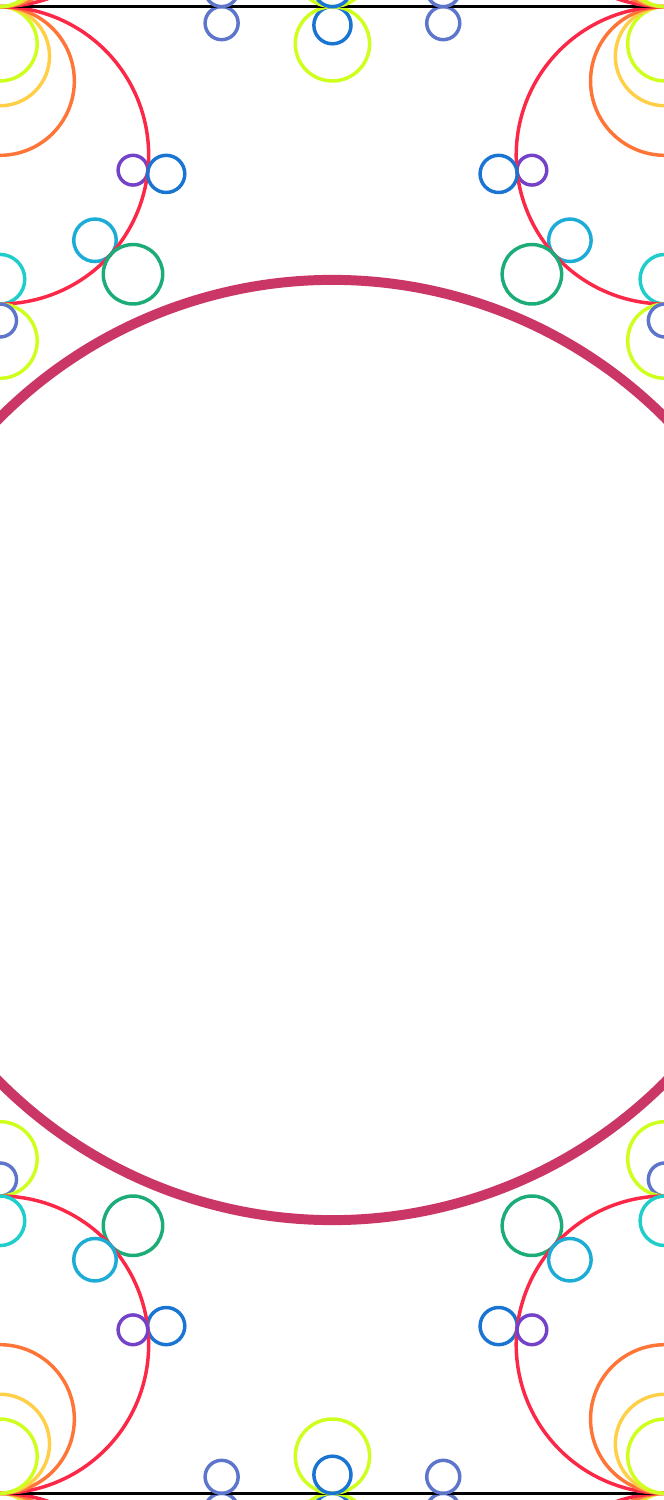} & \includegraphics[height = 0.3\textheight]{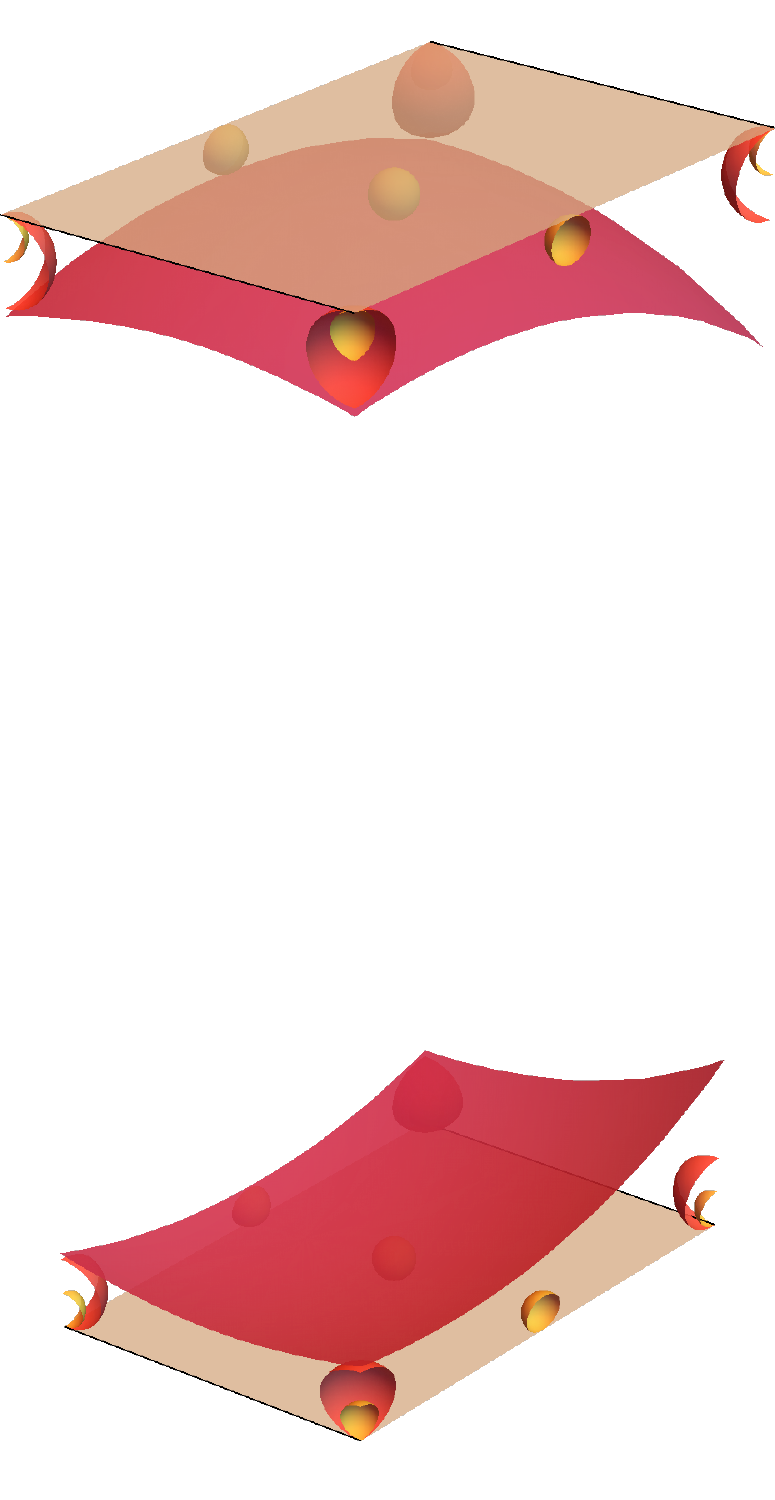}
    \end{tabular}
    \caption{A forbidden ball for $\ZZ[\sqrt{-5}]$ and for $\ZZ \oplus \ZZ \frac{1 + i}{2} \oplus \ZZ j \oplus \ZZ \frac{7j + ij}{14} \subset \left(\frac{-7,-14}{\QQ}\right)$.}
    \label{fig:forbidden_ball_examples}
\end{figure}

\begin{theorem}\label{thm:ghost_spheres_old}
For $\text{dim} = 4$, suppose that $R(\ZZ)$ has a normalized covering vector $u$ with $\nrm(u)>3$. Then one of the following is true:
    \begin{enumerate}
        \item $R(\ZZ)$ is $\ddagger$-Euclidean.
        \item $(R(\ZZ),u)$ is equivalent to one of orders listed in either Table \ref{tab:old_forbidden_balls} or \ref{tab:new_forbidden_balls} with $j$ as covering vector and has the associated forbidden ball for $\mathcal{S}_{R(\ZZ),u}$.
    \end{enumerate}
\end{theorem}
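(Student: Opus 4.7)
The plan is to leverage the classification already in hand (Theorem \ref{thm: covering vector classification}) and reduce the statement to a finite case-by-case verification. By Theorem \ref{thm: covering vectors are excellent}, once $R(\ZZ)$ admits any covering vector at all, we may assume $u$ is normalized and equal to $j$ (up to equivalence), so the hypothesis $\nrm(u) > 3$ combined with the tables restricts us to an explicit family of orders indexed by $R(\QQ) = \left(\frac{-m,n}{\QQ}\right)$ with $m \in \{1,2,3,7,11\}$ and $n$ ranging through congruence classes prescribed in Table \ref{tab:dim4_covering_vectors}. So the first step is simply to invoke Theorem \ref{thm: covering vector classification} and reduce to this enumerated family.

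Next, for each entry I would split into the $\ddagger$-Euclidean case and the non-$\ddagger$-Euclidean case. The $\ddagger$-Euclidean orders here are handled by the previous work of the second author \cite{Sheydvasser2019}, where an explicit list of $\ddagger$-Euclidean maximal $\ddagger$-orders has been determined; these fall into alternative (1). For the remaining orders I need to exhibit a forbidden ball, which is exactly what Table \ref{tab:old_forbidden_balls} (the ``old'' cases, already constructed in \cite{Sheydvasser2019}) and Table \ref{tab:new_forbidden_balls} (the new cases handled in this paper) provide. So the reduction step is: show that every $(R(\ZZ),u)$ satisfying the hypotheses is equivalent to either an order in the $\ddagger$-Euclidean list, or an order appearing in one of the two tables.

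The construction of a forbidden ball, for each entry of Table \ref{tab:new_forbidden_balls}, proceeds by the recipe already mentioned before the theorem: produce a sphere orthogonal to the unit spheres centered at a chosen collection of points in $R(\ZZ)^+$. Concretely, if $S$ is orthogonal to the unit sphere at every lattice point that could possibly contribute a generator of $\mathcal{W}$ moving infinity into the interior of $S$, and if moreover $S$ is also orthogonal to the translates of $S_u$ by elements of $R(\ZZ)^+$, then by Lemma \ref{actions coincide} its inversive coordinates are fixed modulo $\nrm(u)$ by the congruence obstruction in Theorem \ref{thm: congruence restriction using normalized covering vectors}. The congruence obstruction, combined with the fact that any sphere intersecting $S$ internally would have to violate the integrality of the bilinear form computed with $\text{inv}_u$, shows that no sphere of $\mathcal{S}_{R(\ZZ),u}$ can enter the interior of $S$. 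In practice one writes down the candidate center and radius, verifies the orthogonality conditions, and then checks the inversive-coordinate congruence; this is a finite computation per order.

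The main obstacle is not any single conceptual step but the breadth of the case analysis: one must verify for every family in Table \ref{tab:dim4_covering_vectors} not already $\ddagger$-Euclidean that the proposed sphere in Table \ref{tab:old_forbidden_balls} or Table \ref{tab:new_forbidden_balls} truly has empty intersection of interior with $\mathcal{S}_{R(\ZZ),u}$. I expect the hardest part to be the $\left(\frac{-11,n}{\QQ}\right)$ family, since there the covering vector forces six distinct residue classes of $n/11$ mod $11$ and hence six structurally different orders, each requiring its own candidate sphere. The remaining families are more uniform and can be treated in parallel: the quadratic subring $S_u \cap R(\ZZ)^+$ determines a Euclidean imaginary quadratic order, and a forbidden ball for $\mathcal{S}_{R(\ZZ),u}$ can be lifted from a ghost sphere of that quadratic subring by adjoining a judicious offset in the $ij$-direction. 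Once the verification is complete for every entry, the dichotomy in the theorem is established.
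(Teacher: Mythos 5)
Your proposal matches the paper's proof in all essentials: reduce to the finite list of orders via the covering-vector classification (the paper cites Theorem 8.1 of \cite{Sheydvasser2019} for the trichotomy), dispose of the $\ddagger$-Euclidean cases and the Table \ref{tab:old_forbidden_balls} ghost spheres by citing that prior work, and for each new order construct a sphere orthogonal to unit balls at lattice points and rule out intersections by showing that the congruence on $\text{inv}_u(\gamma.S_u)$ from Theorem \ref{thm: congruence restriction using normalized covering vectors} forces the bilinear form $b_u$ to avoid the interval permitting intersection. The paper likewise singles out the $\left(\frac{-11,n}{\QQ}\right)$ family as the representative worked case (sharpening the congruence to modulus $2\,\discrd(R(\ZZ))$ by a computation in $R(\ZZ/2\ZZ)$), so your plan is essentially the same argument.
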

 
\begin{table}
    \centering
    \begin{align*}
    \begin{array}{c|l|l}
         R(\QQ) & R(\ZZ)^+ & \text{inv}_u(G) \\ \hline
         \left(\frac{-1,-1-4n}{\QQ}\right)& \ZZ \oplus \ZZ i \oplus \ZZ j & \sqrt{\frac{D}{D - 4}}\left(2,2,1,1,1\right) \\
         \left(\frac{-1,-2n}{\QQ}\right)& \ZZ \oplus \ZZ i \oplus \ZZ \frac{1 + i + j}{2} & \sqrt{\frac{D^2}{D^2 - 12D + 4}}\left(4,4,2,2,\frac{n-1}{n}\right) \\
         \left(\frac{-1,1 - 4n}{\QQ}\right)& \begin{cases} \ZZ \oplus \ZZ i \oplus \ZZ \frac{1 + j}{2} \\ \ZZ \oplus \ZZ i \oplus \ZZ \frac{i + j}{2} \end{cases} & \sqrt{\frac{D^2}{D^2 - 10D + 1}}\left(4,4,2,2,\frac{n-1}{n}\right) \\
         \left(\frac{-2,-1-4n}{\QQ}\right)& \ZZ \oplus \ZZ i \oplus \ZZ \frac{1 + i + j}{2} & \sqrt{\frac{D^2}{(D - 2)(D - 18)}}\left(4,4,2,2,\frac{4n - 2}{4n + 1}\right) \\
         \left(\frac{-2,1-4n}{\QQ}\right)& \ZZ \oplus \ZZ i \oplus \ZZ \frac{1 + j}{2} & \sqrt{\frac{D^2}{D^2-12D+4}}\left(4,4,2,2,\frac{4n - 2}{4n - 1}\right) \\
         \left(\frac{-2,-2(-3 + 8n)}{\QQ}\right)& \ZZ \oplus \ZZ i \oplus \ZZ \frac{2 + i + j}{4} & \sqrt{\frac{D^2}{D^2-18D+25}}\left(8,8,4,4,\frac{8n - 13}{8n - 3}\right) \\
         \left(\frac{-2,-2(-1 + 8n)}{\QQ}\right)& \ZZ \oplus \ZZ i \oplus \ZZ \frac{i + j}{4} & \sqrt{\frac{D^2}{D^2-14D+9}}\left(8,8,4,4,\frac{8n - 7}{8n - 1}\right) \\
         \left(\frac{-3,-n_3}{\QQ}\right)& \ZZ \oplus \ZZ \frac{1 + i}{2} \oplus \ZZ j & \sqrt{\frac{D}{D - 8}}\left(2,2,1,\frac{1}{3},1\right) \\
         \left(\frac{-3,-3(-1 + 3n)}{\QQ}\right)& \ZZ \oplus \ZZ \frac{1 + i}{2} \oplus \ZZ \frac{i + j}{3} & \sqrt{\frac{D}{D - 8}}\left(6,6,3,1,1\right) \\
         \left(\frac{-7,-n_7}{\QQ}\right)& \ZZ \oplus \ZZ \frac{1 + i}{2} \oplus \ZZ j & \sqrt{\frac{D}{D - 12}}\left(2,2,1,\frac{3}{7}  ,1\right) \\
         \left(\frac{-11,-11n_{11}}{\QQ}\right)& \ZZ \oplus \ZZ \frac{1 + i}{2} \oplus \ZZ j & \sqrt{\frac{D}{D - 8}}\left(2,2,1,\frac{5}{11},1\right)
    \end{array}
    \end{align*}
    \caption{Ghost spheres computed in \cite{Sheydvasser2019}---here, $D$ stands for $|\discrd(R(\ZZ))|$. The last three coordinates of each entry in the rightmost column should be understood as the $1,i,j$ components of $\xi_u(G)$. Any coefficients of $n$ are to be understood as positive integers; coefficients of the form $n_k$ are to be understood as positive integers not divisible by $k$.}
    \label{tab:old_forbidden_balls}
\end{table}

\begin{table}
    \centering
    \begin{align*}
    \begin{array}{c|l|l}
         R(\QQ) & R(\ZZ)^+ & \text{inv}_u(G) \\ \hline
         \left(\frac{-2,-2(1 + 8n)}{\QQ}\right)& \ZZ \oplus \ZZ i \oplus \ZZ \frac{i + j}{2} & \sqrt{\frac{D}{D^2 - 8D + 4}}\left(4,4,2,2,\frac{8n}{1 + 8n}\right) \\
         \left(\frac{-2,-2(3 + 8n)}{\QQ}\right)& \ZZ \oplus \ZZ i \oplus \ZZ \frac{i + j}{2} & \sqrt{\frac{D}{D^2 - 8D + 4}}\left(4,4,2,2,\frac{8n}{1 + 8n}\right) \\
         \left(\frac{-3,-3(1 + 3n)}{\QQ}\right)& \ZZ \oplus \ZZ \frac{1 + i}{2} \oplus \ZZ j & \sqrt{\frac{3D}{3D - 8}}\left(2,2,1,\frac{1}{3},1\right) \\
         \left(\frac{-7,-7n}{\QQ}\right)& \ZZ \oplus \ZZ \frac{1 + i}{2} \oplus \ZZ j & \sqrt{\frac{D}{D - 12}}\left(2,2,1,\frac{3}{7},1\right) \\
         \left(\frac{-11,-11n'}{\QQ}\right)& \ZZ \oplus \ZZ \frac{1 + i}{2} \oplus \ZZ j & \sqrt{\frac{11D}{11D - 8}}\left(2,2,1,\frac{5}{11},1\right) \\
         \left(\frac{-11,-11(2 + 11n)}{\QQ}\right)& \ZZ \oplus \ZZ \frac{1 + i}{2} \oplus \ZZ \frac{3i + j}{11} & \sqrt{\frac{D}{D-24}}\left(22,22,11,3,1\right) \\
         \left(\frac{-11,-11(-5 + 11n)}{\QQ}\right)& \ZZ \oplus \ZZ \frac{1 + i}{2} \oplus \ZZ \frac{4i + j}{11} & \sqrt{\frac{D}{D-28}}\left(22,22,0,4,1\right) \\
         \left(\frac{-11,-11(-4 + 11n)}{\QQ}\right)& \ZZ \oplus \ZZ \frac{1 + i}{2} \oplus \ZZ \frac{2i + j}{11} & \sqrt{\frac{D}{D-40}}\left(22,22,0,2,1\right) \\
         \left(\frac{-11,-11(-3 + 11n)}{\QQ}\right)& \ZZ \oplus \ZZ \frac{1 + i}{2} \oplus \ZZ \frac{5i + j}{11} & \sqrt{\frac{D}{D-8}}\left(22,22,0,-6,1\right) \\
         \left(\frac{-11,-11(-1 + 11n)}{\QQ}\right)& \ZZ \oplus \ZZ \frac{1 + i}{2} \oplus \ZZ \frac{i + j}{11} & \sqrt{\frac{D}{D-32}}\left(22,66,0,-10,1\right)
    \end{array}
    \end{align*}
    \caption{All $\ddagger$-orders for which new forbidden balls are constructed. Here, $D$ stands for $|\discrd(R(\ZZ))|$. The last three coordinates of each entry in the rightmost column should be understood as the $1,i,j$ components of $\xi_u(G)$. Any coefficients of $n$ are to be understood as non-negative integers and $n'$ is to be understood as a non-negative integer that is a square modulo $11$.}
    \label{tab:new_forbidden_balls}
\end{table}
 
\begin{proof}
Using a result of the second author \cite[Theorem 8.1]{Sheydvasser2019}, every such ring $R(\ZZ)$ is either $\ddagger$-Euclidean or is isomorphic to one of the orders listed in Table \ref{tab:old_forbidden_balls} or Table \ref{tab:new_forbidden_balls}. Furthermore, if $R(\ZZ)$ is not isomorphic to one of the orders listed in Table \ref{tab:new_forbidden_balls} and is not $\ddagger$-Euclidean, then there is a ghost sphere $G$ listed in Table \ref{tab:old_forbidden_balls} with the property that it does not intersect any sphere in $G(\ZZ).S_u$ \cite[Lemma 10.1]{Sheydvasser2019}. Therefore, $G$ is a forbidden ball.

Thus, it remains to find forbidden balls for the orders in Table \ref{tab:new_forbidden_balls}. The process is essentially the same: we will simply take an oriented sphere $G$ that is orthogonal to unit balls centered at points in $R(\ZZ)^+$. The proofs for the various cases are essentially the same, so we will just present the case where
    \begin{align*}
        \begin{array}{ll}
            R(\QQ) = \left(\frac{-11,-11(-1 + 11n)}{\QQ}\right) & R(\ZZ) = \ZZ \oplus \ZZ \frac{1 + i}{2} \oplus \ZZ \frac{i + j}{11} \oplus \ZZ \frac{11j + ij}{22}.
        \end{array}
    \end{align*}
    
\noindent To start, we know that if $\gamma \in \mathcal{W}$ and $u = j$ is a covering vector, then $\text{inv}_u(\gamma.S_u) = (0,0,u) \mod \discrd(R(\ZZ))$. In this case, $|\discrd(R(\ZZ))| = 11n - 1$. However, this can be sharpened a little, because it is easy to check by explicit computation in $R(\ZZ/2\ZZ)$ that actually $\text{inv}_u(\gamma.S_u) = (0,0,u) \mod 2$. This allows us to conclude that $\text{inv}_u(\gamma.S_u) = (0,0,u) \mod 2(11n - 1)$, so we may write it as
    \begin{align*}
        2(11n - 1)\left(k,k',a + b\frac{1 + i}{2} + c\frac{i + j}{11}\right) + (0,0,j)
    \end{align*}
    
\noindent for some $k,k',a,b,c \in \ZZ$. From this, it follows that
    \begin{align*}
        b_j&\left((22,66,-10i + j),\text{inv}_j(\gamma.S_j)\right) \\
        &= \begin{pmatrix} 22 \\ 66 \\ 0 \\ -10 \\ 1 \end{pmatrix}^T\begin{pmatrix} 0 & -\frac{1}{2} & 0 & 0 & 0 \\ -\frac{1}{2} & 0 & 0 & 0 & 0 \\ 0 & 0 & 1 & 0 & 0 \\ 0 & 0 & 0 & 11 & 0 \\ 0 & 0 & 0 & 0 & 11(-1 + 11n) \end{pmatrix} \left(2(11n - 1)\begin{pmatrix} k \\ k' \\ a + \frac{b}{2} \\ \frac{b}{2} + \frac{c}{11} \\ \frac{c}{11} \end{pmatrix} + \begin{pmatrix} 0 \\ 0 \\ 0 \\ 0 \\ 1 \end{pmatrix} \right) \\
        &= 11(-1 + 11n) - 22(-1 + 11n)(5b + c + k' + 3k - cn) \\
        &\in \nrm(j) + 2\nrm(j)\ZZ.
    \end{align*}
    
\noindent But this means that if we take $G$ to be the oriented sphere such that
    \begin{align*}
        \text{inv}_j(G) = \sqrt{11n - 1}{11n - 9}\left(2,2,1 + \frac{5}{11}i + j\right),
    \end{align*}
    
\noindent then for all $\gamma \in \mathcal{W}$,
    \begin{align*}
        b_j\left(\text{inv}_j(G),\text{inv}_j(\gamma.S_j\right) &\in \sqrt{11n - 1}{11n - 9}\left(\nrm(j) + 2\nrm(j)\ZZ\right),
    \end{align*}
    
\noindent and it is easy to see that this set does not intersect $[-\nrm(j),\nrm(j)]$, implying that $G$ is a forbidden ball.
\end{proof}

\begin{theorem}\label{thm: dim_5 case also has forbidden balls}
For $\text{dim} = 5$, suppose that $R(\ZZ)$ has a normalized covering vector $u$ with $\nrm(u) > 3$. Then one of the following is true:
    \begin{enumerate}
        \item $R(\ZZ)$ is $\ddagger$-Euclidean.
        \item $(R(\ZZ),u)$ is equivalent to one of the two orders listed in Table \ref{tab:forbidden_balls_dim5} with $ij/\gcd(i^2,j^2)$ as covering vector and has the associated forbidden ball for $\mathcal{S}_{R(\ZZ),u}$.
    \end{enumerate}
\end{theorem}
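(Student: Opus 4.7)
The plan is to follow closely the template of Theorem \ref{thm:ghost_spheres_old} from the $dim=4$ case. By Theorem \ref{thm: covering vector classification}, the orders $R(\ZZ)$ with a covering vector in $dim=5$ are enumerated in Table \ref{tab:dim5_covering_vectors}, and by Theorem \ref{thm: covering vectors are excellent} we may assume $u$ is normalized with $\nrm(u)$ one of the values listed there. Imposing $\nrm(u) > 3$ cuts the list down to only finitely many pairs $(R(\ZZ),u)$, so the proof reduces to a finite case analysis. The first step is to check each remaining pair against the criterion for (norm-)Euclideanness; those that pass fall into case (1), and the residue should consist of exactly the two orders listed in Table \ref{tab:forbidden_balls_dim5}, for which we must exhibit the claimed forbidden balls.

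For the construction of forbidden balls, the essential input is Theorem \ref{thm: congruence restriction using normalized covering vectors}, which tells us that $\text{inv}_u(\gamma.S_u) \equiv (0,0,u) \pmod{\nrm(u)}$ for every $\gamma \in \mathcal{W}$. In practice this congruence often admits a refinement modulo a small additional prime (typically $2$) by a direct computation in $R(\ZZ/m\ZZ)$, exactly as was done at the end of the proof of Theorem \ref{thm:ghost_spheres_old}. Using such a refinement, any candidate sphere $G$ with inversive coordinates $\text{inv}_u(G) = (\kappa,\kappa',\xi)$ satisfies
\begin{align*}
    b_u\bigl(\text{inv}_u(G),\text{inv}_u(\gamma.S_u)\bigr) = -\tfrac{1}{2}\kappa \kappa_u(\gamma) - \tfrac{1}{2}\kappa' \kappa_u'(\gamma) + \tfrac{1}{2}\tr\bigl(\xi\,\overline{\xi_u(\gamma)}\bigr),
\end{align*}
and this value is forced by the congruence to lie in a coset of $c\ZZ$ for some explicit $c$. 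The strategy is to reverse-engineer $G$: choose $\kappa,\kappa',\xi \in \tfrac{1}{2}R(\ZZ)$ so that the coset in question lies entirely outside $[-\nrm(u),\nrm(u)]$, and then verify that $-\kappa\kappa' + \nrm(\xi) = 1$ so that $G$ is genuinely a sphere. This will force $G$ to be disjoint from every $\gamma.S_u$, making it a forbidden ball.

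The procedure is then applied explicitly to the two orders in Table \ref{tab:forbidden_balls_dim5}. For each order, identify the ideal $\mathfrak{I}$ generated by $u$, check whether the mod-$\nrm(u)$ congruence of Theorem \ref{thm: congruence restriction using normalized covering vectors} can be sharpened modulo $2$ by brute-force computation in $R(\ZZ/2\ZZ)$, then pick $\text{inv}_u(G)$ to realize the smallest possible nonzero value of the resulting arithmetic progression once scaled appropriately, following the same arithmetic that produced the table entries in $dim=4$. The formula $\text{inv}_u(G) = \sqrt{\nrm(u)/(\nrm(u)-c)}\,(\ldots)$ analogous to the entries of Tables \ref{tab:old_forbidden_balls} and \ref{tab:new_forbidden_balls} is the target form, and checking $-\kappa\kappa' + \nrm(\xi) = 1$ for the normalization is a routine calculation.

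The main obstacle is not conceptual but computational: the orders in $dim=5$ live in the full quaternion algebra $H_\RR$ rather than its trace-one subspace, so both the list of potential $G$'s and the bookkeeping of the trace form $\tr(\xi\overline{\xi_u(\gamma)})$ are wider than in $dim=4$. One has to show that for each candidate order the sharpened congruence actually does avoid the interval $[-\nrm(u),\nrm(u)]$, which must be verified on a case-by-case basis; the delicate point is that the smaller $\nrm(u)$ is (e.g.\ $\nrm(u)=5$ or $7$), the tighter the window, so the mod-$2$ (or higher) refinement is essential. Once these finite computations are discharged, Table \ref{tab:forbidden_balls_dim5} is filled in exactly as Tables \ref{tab:old_forbidden_balls} and \ref{tab:new_forbidden_balls} were, and the theorem follows.
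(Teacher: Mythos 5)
Your proposal is correct and follows essentially the same route as the paper: reduce to the finite list of Table \ref{tab:dim5_covering_vectors}, dispose of the Euclidean orders, and for the remaining two construct the forbidden ball by combining the congruence of Theorem \ref{thm: congruence restriction using normalized covering vectors} with the bilinear form $b_u$ to show the resulting arithmetic progression misses the intersection interval. The only difference is cosmetic: the paper does not need a mod-$2$ refinement in either $dim=5$ case, whereas you correctly leave that option open.
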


\begin{table}
    \centering
    \begin{align*}
    \begin{array}{c|l|l}
    R(\QQ) & R(\ZZ) & \text{inv}_u(G) \\ \hline
    \left(\frac{-1,-7}{\QQ}\right) & \ZZ \oplus \ZZ i \oplus \ZZ \frac{i + j}{2} \oplus \ZZ \frac{1 + ij}{2} & \left(7,7,\frac{7 + 7i + 3j + 3ij}{2}\right) \\
	\left(\frac{-2,-26}{\QQ}\right) & \ZZ \oplus \ZZ i \oplus \ZZ \frac{2 + i + j}{4} \oplus \ZZ \frac{2 + 2i + ij}{4} & \frac{1}{\sqrt{5}}\left(13,13,\frac{13 + 13i}{2} + j + \frac{5}{4}ij\right)
    \end{array}
    \end{align*}
    \caption{Forbidden balls corresponding to two orders of definite, rational quaternion algebras.}
    \label{tab:forbidden_balls_dim5}
\end{table}

\begin{proof}
Looking at Table \ref{tab:dim5_covering_vectors}, there are only two maximal orders that have normalized covering vector $u$ with $\nrm(u) > 3$, and those are the two listed in Table \ref{tab:forbidden_balls_dim5}. All that remains is calculating forbidden balls for them. The method is the same as the $dim = 4$ case---we simply take the $3$-sphere that is orthogonal to unit balls centered at points in $R(\ZZ)$. For brevity, we shall look at only the case where
    \begin{align*}
        R(\ZZ) = \ZZ \oplus \ZZ i \oplus \ZZ \frac{2 + i + j}{4} \oplus \ZZ \frac{2 + 2i + ij}{4} \subset \left(\frac{-2,-26}{\QQ}\right)
    \end{align*}
    
\noindent with $u = ij/2$; the other case is essentially the same, but simpler. We know that $\text{inv}_u(\gamma.S_u) = (0,0,u) \mod \discrd(R(\ZZ))$. Here, $|\discrd(R(\ZZ))| = \nrm(u) = 13$, so we may write
    \begin{align*}
        \text{inv}_u(\gamma.S_u) = 13\left(k,k',a + bi + c\frac{2 + i + j}{4} + d\frac{2+2i+ij}{4}\right) + \left(0,0,\frac{ij}{2}\right)
    \end{align*}
    
\noindent for some $k,k',a,b,c,d \in \ZZ$. From this, we get that
    \begin{align*}
        b_u&\left(\frac{1}{\sqrt{5}}\left(13,13,\frac{13 + 13i}{2} + j + \frac{5}{4}ij\right),\text{inv}_u(\gamma.S_u)\right) \\
        &= \frac{13}{2\sqrt{5}}\left(5 - 13k-13k'+52d+13a+26b+26c\right) \\
        &\in \frac{13}{2\sqrt{5}}\left(5 + 13\ZZ\right).
    \end{align*}
    
\noindent It isn't hard to check that this set does not intersect $[-1,1]$, hence the $G$ listed in Table \ref{tab:forbidden_balls_dim5} is a forbidden ball.
\end{proof}

The construction of these forbidden balls finally allows us to prove our main theorems.

\begin{proof}[Proof of Theorem \ref{thm: main theorem}]
Suppose that $R(\ZZ)$ has a covering vector $u$; we can assume without loss of generality that $u$ is normalized. Then if $|\discrd(R(\ZZ))| > 33$, it follows that $\nrm(u) > 3$ by Theorem \ref{thm: covering vectors are excellent}. By Theorems \ref{thm:n=3forbiddenball}, \ref{thm:ghost_spheres_old}, and \ref{thm: dim_5 case also has forbidden balls}, we know that $\mathcal{S}_{R(\ZZ),u}$ admits a forbidden ball. It follows immediately from Theorem \ref{thm: forbidden balls force infinite index} that $E(\ZZ)$ is infinite index in $G(\ZZ)$. Since the forbidden ball $B$ can be translated by $R(\ZZ)^+$ so that its image is inside the fundamental parallelogram and $\mathcal{A}_{R(\ZZ),u}$ is contained inside $\mathcal{S}_{R(\ZZ),u}$, it is obvious that $\mathcal{A}_{R(\ZZ),u}$ has density less than $1$. Finally, the two planes in $\mathcal{A}_{R(\ZZ),u}$ that bound the fundamental parallelogram must be separated by a distance of $\sqrt{\nrm(u)}$. We know that the forbidden ball is in each case the unique oriented sphere orthogonal to unit balls centered at the vertices of this fundamental parallelogram---in the limit, the proportion of the parallelogram left after cutting out by this sphere is then the same as if we took the volume outside a pair of infinite planes tangent to the unit spheres centered at the vertices at the top and bottom of the fundamental parallelogram. This proportion is
    \begin{align*}
        \frac{2}{\sqrt{\nrm(u)}} \xrightarrow{|\disc(R(\ZZ))| \rightarrow \infty} 0,
    \end{align*}
    
\noindent so the density does go to $0$. A more careful analysis is given in Corollary \ref{cor:n=3forbiddensity}.
\end{proof}

\begin{proof}[Proof of Theorem \ref{thm: unreasonable slightness for circle packings}]
This is mostly just a special case of Theorem \ref{thm: main theorem}, with two main additions: for $dim = 3$ case, $\nrm(u) > 3$ is guaranteed as long as $K \neq \QQ(\sqrt{-1}), \QQ(\sqrt{-2})$, or $\QQ(\sqrt{-3})$, all of which are norm-Euclidean---thus, our results are enough to conclude that $E(2,\mathfrak{o}_K$ is infinite-index in $SL(2,\mathfrak{o}_K)$ and $\delta(\mathcal{A}_K) \rightarrow 0$. We have no way, to our knowledge, of utilizing our machinery to prove that the group $E(2,\mathfrak{o}_K$ is non-normal, but this is already known \cite{Nica_2011}.
\end{proof}

\bibliography{AlgebraicIsomorphism}
\bibliographystyle{plain}
\end{document}